\newtheorem{theorem}{Theorem}[section]
\newtheorem{lemma}[theorem]{Lemma}
\newtheorem{proposition}[theorem]{Proposition}
\theoremstyle{definition}
\newtheorem{definition}[theorem]{Definition}
\theoremstyle{remark}
\newtheorem{remark}[theorem]{Remark}
\numberwithin{equation}{section}
\newcommand{\norm}[1]{\lVert#1\rVert}
\newcommand{\cL}{\mathcal{L}}
\newcommand{\tO}{\widetilde{\Omega}}
\newcommand{\cP}{\mathcal{P}}
\newcommand{\R}{\mathbb{R}}
\newcommand{\N}{\mathbb{N}}
\newcommand{\bS}{\mathbb{S}}
\newcommand{\bH}{\mathbb{H}}
\newcommand{\Z}{\mathbb{Z}}
\newcommand{\T}{\mathbb{T}}
\newcommand{\e}{\varepsilon}
\newcommand{\txt}[1]{\text{#1}}
\begin{document}

\title[Homogenization in domains of finite type]{Homogenization and boundary layers \\in domains of finite type}


\author{Jinping Zhuge}
\address{Department of Math, University of Kentucky, Lexington, KY, 40506, USA.}
\curraddr{}
\email{jinping.zhuge@uky.edu}
\thanks{The author is supported in part by National Science Foundation grant DMS-1161154.}
\date{\today}

\subjclass[2010]{35B27, 74Q05, 35J57.}

\begin{abstract}
	This paper is concerned with the homogenization of Dirichlet problem of elliptic systems in a bounded, smooth domain of finite type. Both the coefficients of the elliptic operator and the Dirichlet boundary data are assumed to be periodic and rapidly oscillating. We prove the theorem of homogenization and obtain an algebraic rate of convergence that depends explicitly on dimension and the type of the domain.
\end{abstract}
\keywords{Homogenization, Oscillating Dirichlet problem, Convergence rates, Finite type.}

\maketitle
\section{Introduction}
In this paper, we consider the oscillating Dirichlet problem for uniformly elliptic systems in general domains,
\begin{equation}\label{eq_Leue}
	\left\{
	\begin{aligned}
		\cL_\e u_\e(x) &= 0 &\quad & \txt{in } \Omega, \\
		u_\e(x) &= f(x,x/\e) &\quad & \txt{on } \partial\Omega,
	\end{aligned}
	\right.
\end{equation}
where
\begin{equation*}
	\cL_\e = -\txt{div} (A(x/\e) \nabla) = - \frac{\partial}{\partial x_i} \bigg\{ a^{\alpha\beta}_{ij} \Big( \frac{x}{\e}\Big) \frac{\partial}{\partial x_j}\bigg\}
\end{equation*}
is a second-order elliptic operator in divergence form with rapidly oscillating periodic coefficients (Einstein's convention for summation will be used throughout). Here $\e>0$ is a small parameter and $\Omega\subset \R^d$ is a smooth, bounded domain without the assumption of strict convexity and $d\ge 2$. We assume that the coefficients $A = A(y) = (a_{ij}^{\alpha\beta})$, with $1\le i,j\le d$ and $1\le \alpha,\beta\le m$, satisfies the ellipticity condition,
\begin{equation}\label{cdn_ellipticity}
	\lambda |\xi|^2 \le a_{ij}^{\alpha\beta} \xi_i^\alpha \xi_j^\beta \le \lambda^{-1}|\xi|^2, \qquad \txt{for any } \xi = (\xi_i^\alpha) \in \R^{m\times d},
\end{equation}
where $\lambda\in (0,1)$ is a fixed constant. Both $A(y)$ and the Dirichlet boundary data $f(x,y)$ are assumed to be 1-periodic in $y$, i.e.,
\begin{equation}\label{cdn_periodicity}
	A(y+z) = A(y) \quad \txt{and} \quad f(x,y+z) = f(x,y) \quad \forall x\in \partial\Omega, y\in \R^d, z\in \Z^d.
\end{equation}
Since we will not try to compute the minimal regularity required for $A$ and $f$, we simply assume that
\begin{equation}\label{cdn_smooth}
	A \in C^\infty(\R^d) \quad \txt{and} \quad f\in C^\infty(\partial\Omega\times\R^d).
\end{equation}

The asymptotic analysis of problem (\ref{eq_Leue}), which is closely related to the higher order convergence rates of homogenization problems with non-oscillating boundary data (see Theorem \ref{thm_high}), was raised in \cite{BLP78}, for instance, and remained open for decades until recently; see \cite{AA99,GerMas11,GerMas12,ASS13,ASS14,ASS15,Alek16,AKMP16,ShenZhuge16,Fe14,Pra13} and references therein. The pioneering work was due to G\'{e}rard-Varet and Masmoudi in \cite{GerMas12}. Under the extra assumption that $\Omega$ is strictly convex, they proved that as $\e \to 0$, the unique solution of (\ref{eq_Leue}) $u_\e$ converges strongly in $L^2(\Omega)$ to some function $u_0$, which is a solution of
\begin{equation}\label{eq_L0u0}
	\left\{
	\begin{aligned}
		\cL_0 u_0(x) &= 0 &\quad & \txt{in } \Omega, \\
		u_0(x) &= \bar{f}(x) &\quad & \txt{on } \partial\Omega,
	\end{aligned}
	\right.
\end{equation}
where the operator $\cL_0$ is given by $\cL_0 = -\txt{div}(\widehat{A}\nabla)$, with $\widehat{A}$ being the usual homogenized matrix of $A$, and $\bar{f}$ is the homogenized Dirichlet boundary data that depends non-trivially on $f,A$ and also $\Omega$ (see \ref{eq_barf}). Moreover, they showed that for each $\delta>0$,
\begin{equation*}
	\norm{u_\e - u_0}_{L^2(\Omega)} \le C \e^{\frac{d-1}{3d+5} - \delta},
\end{equation*}
where $C$ depends on $\delta,d,m,A,f$ and $\Omega$. Most recently, under the same conditions, remarkable improvement was made in \cite{AKMP16} for Dirichlet problems, where the authors obtained nearly optimal convergence rates for $d\ge 4$ and improved suboptimal convergence rates for $d=2,3$. Then soon in \cite{ShenZhuge16}, with new ingredients from $A_p$ weighted estimates, Shen and the author of this paper obtained the nearly optimal convergence rates for Neumann problems with first-order oscillating Neumann boundary data for all dimensions, as well as the lower dimensional cases ($d=2,3$) of Dirichlet problems. Precisely, for Dirichlet problems, it was proved in \cite{AKMP16} and \cite{ShenZhuge16} that
\begin{equation}\label{est_convex_rate}
	\norm{u_\e - u_0}_{L^2(\Omega)} \le 
	\left\{
	\begin{aligned}
		C\e^{\frac{1}{4} - \delta} \qquad & \txt{ for }& d=2, \\
		C \e^{\frac{1}{2} - \delta} \qquad & \txt{ for } & d\ge 3.
	\end{aligned}
	\right.
\end{equation}
and the homogenized data $\bar{f}\in W^{1,q}\cap L^\infty(\partial\Omega)$ for any $q<d-1$. The convergence rates in (\ref{est_convex_rate}) are optimal up to an arbitrarily small loss on the exponent, for the optimal convergence rates, i.e., $O(\e^{1/4})$ for $d=2$ and $O(\e^{1/2})$ for $d\ge 3$, were shown in \cite{ASS15} for operators with constant coefficients. The regularity for $\bar{f}$ is also sharp in a certain sense as shown in \cite{ShenZhuge16}.

We emphasize that the previous results and their proofs rely essentially on the geometry of the boundary while domains with a different geometry will have completely different behaviors, for example, the difference between strictly convex domains \cite{GerMas12,ASS15} and polygon domains \cite{GerMas11,ASS14}. Also in a very recent paper \cite{Alek16}, it was shown constructively that the rate of convergence for (\ref{eq_Leue}) can be arbitrarily slow if the domain is merely \emph{non-strictly} convex. In fact, the strict convexity of the domain plays an essential role in the arguments of \cite{GerMas12,AKMP16,ShenZhuge16}, for which we will give a brief interpretation in two aspects as follows. On one hand, near a given point on $\partial\Omega$, the local behavior of the solution depends whether or not the direction of the normal vector to $\partial\Omega$ is non-resonant (with the lattice $\Z^d$). If it is non-resonant, the quantitative analysis depends further on the so called Diophantine condition of the normal vector. For this reason, the regularity of the Diophantine function on the boundary, which is definitely guaranteed by the strict convexity of a domain, does have a crucial impact on the rates of convergence. On the other hand, when approximating $u_\e$, a solution of an oscillating Dirichlet problem, near a given point on $\partial\Omega$ by a solution of a half-space problem with the corresponding tangent plane being the boundary, the convexity of $\Omega$ ensures that the domain lies on one side of the tangent plane and hence $u_\e$ could be well approximated near the given point. This fact obviously fails if $\Omega$ is a non-convex domain.

The main purpose of this paper is to remove the assumption of strict convexity and obtain an algebraic rate of convergence that depends explicitly on certain quantitative property of the boundary. We should point out that this is not the first paper dealing with oscillating boundary value problems in general domains. In \cite{Fe14}, the authors established the qualitative homogenization for Dirichlet problems of (scalar) fully nonlinear uniformly elliptic operators in general domains, whose irrational normals have a small Hausdorff dimension. In \cite{CK14} and \cite{FKS15}, the same type of equations with oscillating Neumann boundary data was studied in general domains without flatness. However, as far as we know, this should be the first paper that studies the quantitative homogenization of periodic oscillating boundary value problem in general non-convex smooth domains.

Though the assumption of strict convexity on the domains will be removed, as we have mentioned before, we do require some restrictive condition to rule out any boundary with nontrivial resonant portion, for example, a domain with flat portion with rational normal on the boundary. In this paper, we introduce a mild condition on $\partial\Omega$ to characterize the non-flatness of $\partial\Omega$, namely, the hypersurfaces of finite type. The notion of finite type has been well studied in many references mainly in Fourier analysis and could be defined geometrically in terms of the condition of finite order of contact on the principle curvatures. For our convenience in this paper, we prefer to introduce an equivalent analytical definition. We first give a definition of functions of finite type; also see \cite[VIII.2]{Stein93}.
\begin{definition}\label{def_f}
	A smooth function $g$ is of type $k$ ($k \ge 2$) in some connected open set $U$, if there exist some multi-index $\alpha$ with $1<|\alpha| \le k$ and $\delta > 0$ such that $|\partial^\alpha g| \ge \delta$ in $U$.
\end{definition}
Let $S$ be a smooth hypersurface in $\R^d$. For any $x_0\in S$, we can translate and rotate the hypersurface so that $x_0$ is moved to the origin and the tangent plane becomes $x_d = 0$. Meanwhile, near $x_0$, $S$ is transformed to a graph of some function $x_d = \phi = \phi_{x_0}:\R^{d-1} \to \R$, which satisfies $\phi(0) = 0$ and $\nabla\phi (0) = 0$. We call $x_d = \phi(x') = \phi_{x_0}(x')$ the \emph{local graph} of $S$ at $x_0$.

\begin{definition}\label{def_Omg}
	A smooth hypersurface $S\subset \R^d$ is called type $k$, if for each $x_0 \in S$, there exist constants $r>0$ and $\delta>0$ such that the corresponding local graph $x_d = \phi_{x_0}(x')$ is of type $k$ in $B(0,r) \subset \R^{d-1}$ with a lower bound $\delta$. For a smooth domain $\Omega$, we also say $\Omega$ is of type $k$ if its boundary $\partial\Omega$ is of type $k$.\footnote{If $S$ is a closed smooth hypersurface (i.e., the boundary of a bounded smooth domain), the constants $r$ and $\delta$ can be chosen uniformly with respect to $x_0$. The condition of finite type on a hypersurface sometimes is referred as finite order of contact with the tangent plane.}
\end{definition}

Now, we state the main theorem as follows.

\begin{theorem}\label{thm_main}
	Assume that $\Omega$ is a bounded, smooth domain of type $k$, and that (\ref{cdn_ellipticity}), (\ref{cdn_periodicity}) and (\ref{cdn_smooth}) hold. Then problem (\ref{eq_Leue}) admits homogenization, i.e., the solution $u_\e$ of (\ref{eq_Leue}) converges to $u_0$ in $L^2(\Omega)$, where $u_0$ is the unique solution of the corresponding homogenized problem (\ref{eq_L0u0}) with boundary data $\bar{f}$. Moreover, there exist some $q^*>0$ and $\alpha^* > 0$ depending explicitly on $k$ and $d$ such that
	\begin{equation}\label{eq_barf_W1q}
	\bar{f} \in W^{1,q}\cap L^\infty(\partial\Omega), \qquad \txt{for any } q<q^*,
	\end{equation}
	and
	\begin{equation}\label{est_ueu0L2}
		\norm{u_\e - u_0}_{L^2(\Omega)} \le C \e^{\frac{1}{2}\alpha^* - \delta},
	\end{equation}
	where $\delta>0$ is an arbitrarily small number and $C$ depends on $\delta,k, d,m,A,f$ and $\Omega$.
\end{theorem}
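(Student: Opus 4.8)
The plan is to follow the by-now-standard route for oscillating Dirichlet problems---localize near the boundary, compare with a half-space problem whose boundary is the tangent plane, invoke the known estimates for half-space problems with Diophantine normals, and integrate over $\partial\Omega$---but to replace the strict-convexity input by a quantitative control, coming from the finite-type hypothesis, on the set of ``bad'' boundary points and on the Diophantine constant as a function of the base point. The first step is to set up the near-boundary layer: for $x_0\in\partial\Omega$ write the local graph $x_d=\phi_{x_0}(x')$ and the inward unit normal $n(x_0)$, and decompose $\partial\Omega$ according to whether $n(x_0)$ is rational (resonant), badly irrational, or satisfies a Diophantine condition $|n(x_0)\cdot\xi|\ge\kappa|\xi|^{-\sigma}$ for $\xi\in\Z^d\setminus\{0\}$. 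Here the type-$k$ condition does the crucial work: because some derivative $\partial^\alpha\phi_{x_0}$ of order between $2$ and $k$ is bounded below by $\delta$ uniformly in $x_0$, the Gauss-type map $x_0\mapsto n(x_0)$ cannot concentrate---a van der Corput / sublevel-set estimate (as in \cite[VIII.2]{Stein93}) gives that the measure of $\{x_0\in\partial\Omega:\ |n(x_0)-e|\le t\}$ is $O(t^{1/(k-1)})$ uniformly in the direction $e$, and more generally that the non-Diophantine set has controlled size. This is the analogue, for finite type, of the fact that on a strictly convex surface the normal map is a diffeomorphism.

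Next I would quote (or re-derive, following \cite{GerMas12,AKMP16,ShenZhuge16}) the half-space machinery: for a half-space $\{x\cdot n>c\}$ with $n$ Diophantine with constants $(\kappa,\sigma)$, the boundary-layer corrector converges with a rate that is polynomial in $\e$ with exponent depending on $\kappa,\sigma$, and the homogenized boundary value $\bar f(x_0)$ is obtained as the corresponding ergodic mean; moreover $\nabla_{x_0}\bar f$ is controlled by a negative power of $\kappa$. The second step is then the approximation lemma: near $x_0$, on a boundary cube of size $\rho$, the solution $u_\e$ is close in $L^2$ to the half-space solution with plane $T_{x_0}\partial\Omega$, with an error governed by the curvature, i.e. by $\|\phi_{x_0}\|_{C^2}\rho^2$. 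In the strictly convex case one also uses one-sidedness of $\Omega$ relative to the tangent plane; in the finite-type, possibly non-convex case this fails, so here I would instead absorb the ``wrong side'' contribution into the error by a slightly smaller choice of $\rho$ and a boundary Caccioppoli/energy estimate, paying an additional power of $\e$. The third step is the global assembly: split $\partial\Omega$ into the Diophantine part---where one integrates the local rate against the distribution of $\kappa$, using the finite-type sublevel bound to ensure the integral $\int (\kappa^{-p})\,d\mu$ converges for $p$ below a threshold fixed by $k,d$---and the bad part, whose small measure times an $O(1)$ bound gives an acceptable contribution. Optimizing the free parameters ($\rho$, the Diophantine cutoff $\kappa$, and the exponent split) in terms of $\e$ yields the claimed rate $\e^{\alpha^*/2-\delta}$ with $\alpha^*$ an explicit function of $k$ and $d$, and the same sublevel-set bound integrated against $|\nabla_{x_0}\bar f|^q$ gives $\bar f\in W^{1,q}\cap L^\infty(\partial\Omega)$ for $q<q^*$.

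I expect the main obstacle to be the loss of one-sidedness: every step in the convex theory that ``looks at $\Omega$ from the tangent plane'' must be replaced by an argument that tolerates a thin sliver of $\Omega$ on the exterior side of $T_{x_0}\partial\Omega$, of thickness $\sim\rho^2$. Controlling the resulting error without destroying the rate forces $\rho$ to be taken a definite power of $\e$ smaller than in \cite{AKMP16}, and the bookkeeping of how this interacts with the Diophantine integration (which itself degrades as $k$ grows) is what produces the $k$- and $d$-dependent exponent rather than the optimal one. A secondary technical point is establishing the sublevel-set measure bound for the normal map with constants uniform over $x_0\in\partial\Omega$; this is where Definition~\ref{def_f} and Definition~\ref{def_Omg}, together with compactness of $\partial\Omega$, are used essentially, and it should follow from a covering argument reducing to the one-variable van der Corput lemma applied along curves on $\partial\Omega$.
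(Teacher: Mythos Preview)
Your outline is essentially the paper's strategy: reduce the finite-type hypothesis to a weak-$L^p$ bound on the reciprocal of the Diophantine function via a sublevel-set estimate (Proposition~\ref{prop_weakp} and Proposition~\ref{prop_typek}), then run the machinery of \cite{AKMP16,ShenZhuge16} with this weaker integrability in place of the $L^{d-1,\infty}$ bound that strict convexity provides. Two points are worth correcting.

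First, your expectation that the loss of one-sidedness forces an extra power of $\e$ is not borne out. The paper's device (Section~4) is cleaner than absorbing the sliver into an error: one inscribes a $C^{2,\alpha}$ subdomain $\widetilde\Omega\subset\Omega\cap\bH^d_{n_0}(a)$ tangent to $\partial\Omega$ at $x_0$, proves the gradient estimate for $w_\e=u_\e-\bar v_\e$ there exactly as in the convex case, and then extends the half-space profile $V$ to a $C^2$ function $\bar V$ across $t=0$ by the standard reflection formula (Proposition~\ref{prop_ExtP}). Because the extended $\bar v_\e$ retains the bounds $\|\nabla^k\bar v_\e\|_{L^\infty}\le C\e^{1-k}$ for $k=0,1,2$, the estimate on $\widetilde\Omega$ propagates to the thin region $\Omega\setminus\widetilde\Omega$ by the mean value theorem with \emph{no} additional loss; see Theorem~\ref{thm_dwe}. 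So the non-convexity is cost-free at this step, and the degradation in $\alpha^*$ comes entirely from the weaker Diophantine integrability.

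Second, the global assembly is not a binary Diophantine/bad split but a Calder\'on--Zygmund-type decomposition of $\partial\Omega$ into surface cubes $\widetilde Q_j$ of variable side $r_j\in[\tau,C\sqrt\tau]$, adapted to $F=\varkappa^{1/\gamma}$ with $\gamma=(d-1)/p$ and $\tau=\e^s$ (Section~5). Each cube carries a point $z_j$ with $\varkappa(z_j)>(\tau/r_j)^\gamma$, and the counting estimate $\#\{r_j\ge\lambda\tau\}\le C(\lambda\tau)^{1-d}\sigma\{\varkappa^{1/\gamma}\le\lambda^{-1}\}$ is what converts the weak-$L^p$ bound into control of the sums $\sum_j r_j^{d-1+\alpha}$ (Lemma~\ref{lem_rj}). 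The five error terms $I_1,\dots,I_5$ are then bounded in terms of $s$ and $\gamma$, and $\alpha^*$ is obtained by optimizing over $s\in[1/2,1]$; the formula \eqref{eq_alpha*} records this. Your description of ``optimizing the free parameters'' is right in spirit, but the CZ decomposition with variable cube sizes is what makes the bookkeeping go through. (A minor slip: the Diophantine condition here is $|(I-n\otimes n)\xi|\ge\varkappa|\xi|^{-\mu}$, i.e.\ no lattice point is close to the \emph{line} $\R n$, not $|n\cdot\xi|\ge\kappa|\xi|^{-\sigma}$.)
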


The explicit formula for $q^*$ is given by
\begin{equation}\label{eq_q*}
q^* = \frac{d-1}{2\gamma-1},
\end{equation}
with $\gamma = (d-1)(k-1)$. While the explicit formula for $\alpha^*$, which comes from optimizing several error terms, is given by
\begin{equation}\label{eq_alpha*}
	\alpha^* = \max_{s\in [1/2,1]} \bigg[ s \wedge 4(s - \frac{1}{2}) \wedge (d-1)(s - \frac{1}{2})
	\wedge \frac{(1-s)(d-1)}{\gamma -1} \wedge \frac{s(d-1)}{1+\gamma}\bigg],
\end{equation}
where $\gamma$ is the same and $a\wedge b := \min\{a, b\}$. Observe that given specific $d$ and $k$, $\alpha^*$ can be computed by solving a linear programming problem. We remark that except for lower dimensions and $k=2$ (see Remark \ref{rmk_finite}), the formula for $\alpha^*$ seems suboptimal in general, in view of the exponent obtained for non-oscillating operators ($\alpha^* = 1/k$, regardless of dimensions; see Theorem \ref{thm_const} for details). Nevertheless, finding a better or optimal rate for general $k$ and $d$ should also be an interesting problem.

The first reduction of the proof of Theorem \ref{thm_main} is due to a new idea for quantifying the non-flatness of the boundary. Recall that one of the key points in oscillating boundary value problems is the so called Diophantine condition; see Definition \ref{def_Dioph}. It has been shown in \cite{GerMas12} that if $\Omega$ is a bounded, smooth and strictly convex domain, then the reciprocal of \emph{Diophantine function} $\varkappa \circ n(x) = \varkappa(n(x))$ is in $L^{d-1,\infty}(\partial\Omega,d\sigma)$, where $\sigma$ is the surface measure of $\partial\Omega$ and $n(x)$ is the unit outer normal to $\partial\Omega$ at $x$. This condition played a key role in \cite{GerMas12,AKMP16,ShenZhuge16} for oscillating boundary value problems in strictly convex domains. Similarly in our setting, to prove Theorem \ref{thm_main}, we would like to show that if $S$ is a closed (i.e., compact and boundaryless) smooth hypersurface of finite type, then $(\varkappa\circ n)^{-1} \in L^{p,\infty}(S,d\sigma)$ for some $p>0$ depending explicitly on the type (actually, $p=1/(k-1)$ if $k$ is the type). Surprisingly, we are able to show that $S$ is finite type if and only if there exists some $p>0$ such that $(\varkappa\circ On)^{-1} \in L^{p,\infty}(S,d\sigma)$ uniformly for any orthogonal matrix $O$. This equivalence is a consequence of the famous van de Corput's lemma and a criteria established in Proposition \ref{prop_weakp} which involves the so called sublevel set estimate. The orthogonal matrix involved here is a necessary requirement for rotation invariance of the finite type assumption. As another easy corollary of the criteria, we also show that $S$ is strictly convex if and only if $(\varkappa\circ On)^{-1} \in L^{d-1,\infty}(S,d\sigma)$ uniformly for all orthogonal matrix $O$. In addition, the relationships between finite type condition, oscillatory integrals, the property of Diophantine function and homogenization are summarized in Figure \ref{fig_1}.

As a consequence, Theorem \ref{thm_main} is reduced to

\begin{theorem}\label{thm_maina}
	Assume that $\Omega$ is a bounded, smooth domain such that $(\varkappa\circ n)^{-1} \in L^{p,\infty}(\partial\Omega,d\sigma)$, 
	and let $A$ and $f$ be the same as Theorem \ref{thm_main}. Then (\ref{eq_Leue}) admits homogenization, and (\ref{eq_barf_W1q}) and (\ref{est_ueu0L2}) hold with $q^*$ and $\alpha^*$ given by (\ref{eq_q*}) and (\ref{eq_alpha*}), respectively, where $\gamma = (d-1)/p$.
\end{theorem}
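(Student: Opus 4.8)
The plan is to run the three-stage strategy of \cite{GerMas12,AKMP16,ShenZhuge16}---half-space analysis, construction and regularity of the homogenized boundary datum, then a boundary-layer/interior decomposition for the convergence rate---while carrying the Diophantine constant $\varkappa(n)$ explicitly through every estimate, so that the weak-$L^{d-1}$ information available in the strictly convex case is replaced throughout by the hypothesis $(\varkappa\circ n)^{-1}\in L^{p,\infty}(\partial\Omega,d\sigma)$ with $\gamma=(d-1)/p$.

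First I would assemble the half-space theory. For a unit vector $n$ satisfying a Diophantine condition with constant $\varkappa(n)$ and a half-space $H=\{x\cdot n<c\}$, one studies $\cL_\e v=0$ in $H$ with $v=f(x,x/\e)$ on $\partial H$; after rescaling this is the $\e$-free boundary-layer system of \cite{GerMas12}, whose solvability, exponential stabilization toward a boundary-layer tail, and quantitative dependence of that tail on $n$ are all controlled by powers of $\varkappa(n)^{-1}$. I would import these facts from \cite{GerMas12,AKMP16} with $\varkappa(n)$ kept visible, and then \emph{define} $\bar f(x)$ as the boundary-layer tail attached to the tangent plane of $\partial\Omega$ at $x$, i.e.\ the stabilization limit determined by $f(x,\cdot)$, $A$ and $n(x)$.

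Next comes the regularity of $\bar f$. The bound $\bar f\in L^\infty(\partial\Omega)$ follows from the maximum principle for the boundary-layer system and does not see the Diophantine constant. For the Sobolev bound, I differentiate $\bar f$ tangentially along $\partial\Omega$: the chain rule produces $\nabla n(x)$, harmless because $\partial\Omega$ is smooth, times $\nabla_n$ of the boundary-layer tail, whose size is a power of $\varkappa(n(x))^{-1}$. Combining this pointwise blow-up rate with $(\varkappa\circ n)^{-1}\in L^{p,\infty}$ and an interpolation/Lorentz-space estimate in the spirit of \cite{ShenZhuge16} yields $\nabla_{\mathrm{tan}}\bar f\in L^q(\partial\Omega)$ for every $q<q^*$, and $q^*=(d-1)/(2\gamma-1)$ is precisely the borderline exponent this balance produces. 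I expect to have to reproduce this weighted estimate in detail, since this is the step that pins down the exact value of $q^*$.

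Finally, the rate. Fix $s\in[1/2,1]$ and split $\Omega$ into the layer $\Omega_{\e^s}=\{\mathrm{dist}(\cdot,\partial\Omega)<\e^s\}$ and its complement. In the interior one compares $u_\e$ with $u_0$ via classical periodic homogenization estimates; since $u_0$ is only $W^{1,q}$ one must first smooth $u_0$ (equivalently $\bar f$) near $\partial\Omega$, so the interior error worsens as $s\uparrow1$, which is where the terms $(1-s)(d-1)/(\gamma-1)$ and $s(d-1)/(1+\gamma)$ in (\ref{eq_alpha*}) come from. In $\Omega_{\e^s}$ I would cover $\partial\Omega$ by $\sim\e^{-s(d-1)}$ balls of radius $\e^s$ and, on each, replace $u_\e$ by the half-space solution over the corresponding tangent plane using the stabilization rate; the main obstacle enters here, because $\Omega$ need not lie on one side of its tangent plane. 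Within a radius-$r$ ball the set where $\Omega$ and the tangent half-space disagree has measure $O(r^{d+1})$ since $\partial\Omega$ is $C^2$, and there both $u_\e$ and the comparison solution are bounded by $\norm{f}_{L^\infty}$; adding these local defects over the cover, weighting the half-space rate against $(\varkappa\circ n)^{-1}$, and isolating the resonant set $\{\varkappa(n)^{-1}>\Lambda\}$ (of surface measure $O(\Lambda^{-p})$ by the weak-$L^p$ bound, with contribution absorbed by the $L^\infty$ estimate) supplies the remaining competing exponents $s$, $4(s-\frac{1}{2})$ and $(d-1)(s-\frac{1}{2})$. Taking the minimum of all these and optimizing over $s$ and over $\Lambda\sim\e^{-t}$ is exactly the linear program (\ref{eq_alpha*}); a standard energy estimate together with Poincar\'e's inequality then assembles the pieces into (\ref{est_ueu0L2}). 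Apart from this non-convex half-space comparison and the bookkeeping of the $\varkappa$-weighted error sums near the resonant set, the argument is a quantitative rerun of the strictly convex theory with $p$ replacing $d-1$.
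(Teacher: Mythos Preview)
Your outline captures the high-level architecture (half-space corrector, definition and regularity of $\bar f$, then a boundary-layer/interior split with a parameter $s$ to be optimized), but the implementation you sketch diverges from the paper's in two places where the details actually drive the exponents, and in one of them there is a genuine gap.

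First, the paper does not compare $u_\e$ and $u_0$ by ``classical periodic homogenization estimates in the interior plus a cover of $\partial\Omega$ by balls of fixed radius $\e^s$ plus a resonant cutoff $\{\varkappa^{-1}>\Lambda\}$.'' Instead it starts from the Poisson integral and the two-scale expansion of the Poisson kernel (so the entire analysis is already localized on $\partial\Omega$), and then performs a Calder\'on--Zygmund--type decomposition of $\partial\Omega$ adapted to $F=\varkappa^{1/\gamma}$, producing surface cubes $\widetilde Q_j$ of \emph{variable} side length $r_j\in[\tau,C\sqrt\tau]$ with $\tau=\e^s$. The key summation device is the function $\Theta_t(x)=\sum_j r_j^{d-1+t}|x-\tilde x_j|^{1-d}$ together with $\sum_j r_j^{d-1+\alpha}\le C\tau^\alpha$; these are what convert the local half-space errors (five pieces $I_1,\dots,I_5$) into the precise competing exponents in (\ref{eq_alpha*}). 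A uniform-radius cover together with a single resonant-set threshold $\Lambda$ does not obviously reproduce the same balance; in particular the exponent $(1-s)(d-1)/(\gamma-1)$ comes from applying the quantitative ergodic theorem on each cube with its own Diophantine constant $\varkappa(z_j)>(\tau/r_j)^\gamma$, which hinges on the adapted cube sizes rather than a fixed radius.

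Second, and more seriously, your non-convexity device is too weak. You propose to control the set where $\Omega$ and its tangent half-space disagree by a volume bound $O(r^{d+1})$ and the fact that both $u_\e$ and the comparison solution are $L^\infty$-bounded there. But what the argument actually needs is a pointwise bound on $\nabla\Phi^*_\e$ (equivalently on $\nabla w_\e$, with $w_\e=u_\e-\bar v_\e$) on the boundary surface cubes $2\widetilde Q_j$, because this gradient sits inside the correction factor $\omega_\e$ in the Poisson-kernel expansion. An $L^\infty$ control of $w_\e$ on a thin set cannot substitute for that. The paper's remedy is different: it introduces an inscribed $C^{2,\alpha}$ domain $\widetilde\Omega\subset\Omega\cap\bH^d_{n_0}(a)$ tangent to $\partial\Omega$ at $x_0$, proves the desired Lipschitz bound for $w_\e$ there by the usual convex-domain argument, and then \emph{extends the half-space solution $v_\e$ to a $C^2$ function $\bar v_\e$ across $\partial\bH^d_{n_0}(a)$} via an explicit reflection formula, so that $\nabla^2 w_\e$ is controlled everywhere in $B_r\cap\Omega$ and the gradient estimate (\ref{est_dwe_O}) carries over to the ``wrong side'' by a one-line mean-value argument. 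This extension step is the genuinely new idea for non-convex domains, and your proposal is missing it.
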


Notice that Theorem \ref{thm_maina} is an even more general result which recovers the 
special case of strictly convex domains. Actually, if $\Omega$ is bounded, smooth and strictly convex, then $(\varkappa\circ n)^{-1} \in L^{d-1,\infty}(\partial\Omega,d\sigma)$ and hence $\gamma = 1$ in (\ref{eq_q*}) and (\ref{eq_alpha*}). Thus, one sees that $q^* = d-1$ and $\alpha^* = 1\wedge \frac{d-1}{2}$, which exactly coincides with the exponents of (\ref{est_convex_rate}) for all dimensions $d\ge 2$. For $p<d-1$, it is unknown whether $q^*$ or $\alpha^*$ are optimal.

We now present a sketch of the proof of Theorem \ref{thm_maina}. Our approach follows the line of \cite{AKMP16} and \cite{ShenZhuge16} with some technical modifications. The starting point is the Poisson integral formula for $u_\e$ and an expansion for Poisson kernel established in \cite{AveLin87},
\begin{equation}\label{eq_ue_exp}
	\begin{aligned}
		u_\e(x) & = \int_{\partial\Omega} P_{\Omega,\e}(x,y) f\Big(y,\frac{y}{\e}\Big) d\sigma(y)\\
		& = \int_{\partial\Omega} P_{\Omega}(x,y) \omega_{\e}(y) f\Big(y,\frac{y}{\e}\Big) d\sigma(y) + \txt{ small error}
	\end{aligned}
\end{equation}
where $P_{\Omega,\e}$ and $ P_{\Omega}$ are the Poisson kernels associated with $\cL_\e$ and $\cL_0$, respectively, in $\Omega$, and $\omega_\e$ is an oscillating function correcting the Poisson kernel. The integral in the second line of (\ref{eq_ue_exp}) will be denoted by $\tilde{u}_\e$ and it is sufficient to consider the $L^2$ error of $\tilde{u}_\e - u_0$. In view of (\ref{eq_omgExp}), the only obscure factor in $\omega_\e$ is $\nabla \Phi^*_\e$, the adjoint Dirichlet correctors introduced in \cite{KLS14}. We then show that in a neighborhood of some given point on $\partial\Omega$, $\nabla \Phi^*_\e$ can be approximated by a function in the form of
\begin{equation}\label{eq_dPhi*}
	\nabla \Phi^*_\e(x) \sim I + \nabla\chi^*(x/\e) + \nabla \bar{v}^*_\e(x)
\end{equation}
where $I$ is the identity matrix and $\chi^*$ is the usual adjoint corrector. In the case of convex domains, the third term on the right-hand side of (\ref{eq_dPhi*}) is a solution of a half-space problem depending on the given point, as $\Omega$ is on one side of the tangent plane. However, for non-convex domains in our setting, $\Omega$ may lie on both sides of the tangent plane. To handle this situation, we first obtain the approximation (\ref{eq_dPhi*}) in a sub-domain of $\Omega$ which inscribes $\partial\Omega$ at the given point and lies on one side of the tangent plane. Then we extend (\ref{eq_dPhi*}) to the other side by a standard extension argument with $C^2$ regularity preserved. It turns out that the convexity actually plays no role in the approximation (\ref{eq_dPhi*}).

We can then proceed as in \cite{ShenZhuge16} by introducing a Carder\'{o}n-Zygmund-type decomposition of the boundary adapted to the function $F = \varkappa^{1/\gamma}$, where $\gamma$ is given as in Theorem \ref{thm_maina}, and then for each small surface cube in the decomposition, approximating the integral on $\partial\Omega$ by the integral on a carefully selected tangent plane. The sizes of surface cubes in the decomposition sit in $(\tau,\sqrt{\tau})$, where $\tau$ is a small parameter related to $\e$. Unlike the case of strictly convex domains where $\tau = \e^{1-}$ is fixed, for general domains considered in Theorem \ref{thm_maina}, we have to set $\tau = \e^s$ for some $s\in (1/2,1)$ to be determined and eventually identify the best $s$ by optimizing several error terms. Actually, with the decomposition proposed, we split the $L^2$ error of $\tilde{u}_\e - u_0$ into two parts. The trivial part is the $L^2$ estimate of $\tilde{u}_\e - u_0$ over a small boundary layer $\Gamma_\e$ whose volume is $O(\e^s)$. The interior part is much involving and splits further into 5 errors ($I_k, k=1,2,3,4,5$), which are contained in the following:
\begin{equation*}
	\begin{aligned}
		\txt{Total error of }u_\e - u_0 = & \txt{ Error } E_1 \txt{ coming from (\ref{eq_ue_exp})} \\
		&+ \txt{Error } E_2 \txt{ coming from trivial estimate in } \Gamma_\e \\
		&+ \txt{Error } I_1 \txt{ coming from (\ref{eq_dPhi*})} \\
		&+ \txt{Error } I_2 \txt{ coming from projection onto a tangent plane} \\
		&+ \txt{Error } I_3 \txt{ coming from quantitative ergodic theorem} \\
		&+ \txt{Error } I_4 \txt{ coming from regularity of homogenized boundary data} \\
		&+ \txt{Error } I_5 \txt{ coming from changing variables back to } \partial\Omega.
	\end{aligned}
\end{equation*}
The right-hand side of the last expression includes all the error terms needed to bound the total $L^2$ error of $u_\e - u_0$. We point out that the error $E_1$ is $O(\e^{1-})$; the error $E_2$ contributes the exponent $s$ in (\ref{eq_alpha*}); $I_1$ and $I_2$ contribute to exponent $4(s - \frac{1}{2}) \wedge (d-1)(s - \frac{1}{2})$; $I_3$ contributes to exponent $\frac{(1-s)(d-1)}{\gamma -1}$; $I_4$ contributes to exponent $\frac{s(d-1)}{1+\gamma}$ and $I_5$ is smaller and ignored. Note that the parameter $\gamma$, which quantifies the non-flatness of the boundary, is only involved in $I_3$ and $I_4$, due to the presence of the Diophantine function in the arguments. We should also mention that the quantitative ergodic theorem for the estimate of $I_3$ was introduced in \cite{AKMP16} and the (optimal) regularity of the homogenized data involved in the estimate of $I_4$ was first proved in \cite{ShenZhuge16} via an $A_p$ weighted estimate.

The outline of the paper is described as follows. The preliminaries are given in Section 2, including standard notations used throughout. Section 3 is devoted to the quantification of the non-flatness of hypersurfaces of finite type. In Section 4, we establish the approximation (\ref{eq_dPhi*}). In Section 5, we introduce a partition of unity on $\partial\Omega$ with the assumption $\varkappa(\cdot)^{-1} \in L^{p,\infty}(\partial\Omega)$ for some $p>0$. Finally, Theorem \ref{thm_maina} and hence Theorem \ref{thm_main} are proved in Section 6, as well as a theorem concerning the higher order convergence rate for non-oscillating boundary value problems.

{\bf Acknowledgment.} The author would like to thank professor Zhongwei Shen for helpful discussions and suggestions. 

\section{Preliminaries}
Throughout we will use the following notations. The indices $i,j,k,\ell$ usually denote integers ranging between $1$ and $d$, whereas the small Greek letters $\alpha,\beta,\gamma$ usually denote integers ranging between $1$ and $m$. The vector $e_i \in \R^d$ stands for $i$-th vector of canonical basis of $\R^d$ and $e^\alpha \in \R^m$ stands for $\alpha$-th vector in canonical basis of $\R^m$. For $x = (x_1,x_2,\cdots,x_d) \in \R^d$, we write $x = (x',x_d)$ where $x'\in \R^{d-1}$. We use $\bS^{d-1}$ and $\T^d$ to denote the $d-1$ dimensional unit sphere and $d$ dimensional torus, respectively. For the coefficient matrix $A$, we write $A_\e(x) = A(x/\e)$ for simplicity.

For real numbers $a,b$, we let $a\wedge b = \min\{ a,b\}$ and $a\vee b = \max \{a,b\}$. As usual, $C$ and $c$ are positive constants that may vary from line to line and they depend at most on $d,m,A$ and $\Omega$ as well as other parameters, but never on $\e$ or the Diophantine constant $\varkappa$. The dependence on $A$ should be interpreted as both the ellipticity constant $\lambda$ and $\norm{A}_{C^k(\T^d)}$ for some $k=k(d)>1$. In Section 6, we use $\delta$ to denote an arbitrarily small exponent which may also differ in each occurrence.

Assume that $A$ satisfies (\ref{cdn_ellipticity}) and (\ref{cdn_periodicity}). For each $1\le j\le d, 1\le \beta\le m$, let $\chi = (\chi_j^\beta) = (\chi_j^{1\beta},\chi_j^{2\beta},\cdots,\chi_j^{m\beta})$ denote the correctors for $\cL_\e$, which are 1-periodic functions satisfying the cell problem
\begin{equation*}
	\left\{
	\begin{aligned}
		\cL_1 (\chi^{\beta}_{j} + P_j^\beta)(x) &= 0 \qquad  \txt{ in } \T^d, \\
		\int_{\T^d} \chi_j^\beta & = 0,
	\end{aligned}
	\right.
\end{equation*}
where $P_j^\beta(x) = x_je^\beta $. Note that $\nabla P_j^\beta = e_j e^\beta$.

We introduce the matrix of Dirichlet boundary correctors $\Phi_\e = \Phi_{\e,j}^\beta = (\Phi_{\e,j}^{1\beta},\Phi_{\e,j}^{2\beta},\dots,\Phi_{\e,j}^{m\beta})$ associated with $\cL_\e$ in a bounded domain $\Omega$. Indeed, for each $1\le j\le d, 1\le \beta\le m$, $\Phi_{\e,j}^{\beta}$ is the solution of
\begin{equation*}
	\left\{
	\begin{aligned}
		\cL_\e \Phi^{\beta}_{\e,j}(x) &= 0 \qquad & \txt{ in }& \Omega, \\
		\Phi^{\beta}_{\e,j}(x) &= P_j^\beta (x) \qquad & \txt{ on } &\partial\Omega.
	\end{aligned}
	\right.
\end{equation*}

The homogenized operator is given by $\cL_0 = -\txt{div}(\widehat{A}\nabla)$, where the homogenized matrix $\widehat{A} = (\widehat{a}_{ij}^{\alpha\beta})$ is defined by
\begin{equation*}
	\widehat{A} = \int_{\T^d} A(I+\nabla \chi) \quad \txt{or in component form} \quad \widehat{a}_{ij}^{\alpha\beta} = \int_{\T^d} \bigg\{ a_{ij}^{\alpha\beta} + a_{ik}^{\alpha\gamma} \frac{\partial}{\partial y_k} (\chi_j^{\gamma\beta}) \bigg\}.
\end{equation*}
We also introduce the adjoint operator $\cL^*_\e = - \txt{div}(A^*_\e \nabla )$, where $A^* = (a_{ij}^{*\alpha\beta})$ and $a_{ij}^{*\alpha\beta} = a_{ji}^{\beta\alpha}$. Note that $A^*$ also satisfies (\ref{cdn_ellipticity}), (\ref{cdn_periodicity}) and (\ref{cdn_smooth}). Let $\chi^*$ and $\Phi_{\e}^*$ be the adjoint correctors and the adjoint Dirichlet boundary correctors, respectively, associated with $\cL^*$.

Let $\Omega$ be a bounded $C^{2,\sigma}$ domain and $\sigma \in (0,1)$. The matrix of Poisson kernel $P_{\Omega,\e}: \Omega\times\partial\Omega \mapsto \R^{m\times m}$, associated with $\cL_\e$ in $\Omega$, is defined by
\begin{equation*}
	P_{\Omega,\e}^{\alpha\beta}(x,y) = -n(y)\cdot a^{\gamma\beta}(y/\e) \nabla_y G_{\Omega,\e}^{\alpha\gamma}(x,y),
\end{equation*}
where $n(y)$ is the unit outer normal and $G_{\Omega,\e}$ is the matrix of Green's function associated with $\cL_\e$ in $\Omega$. The following uniform estimates in \cite{AveLin87} will be useful,
\begin{equation}\label{est_Poi1}
	|P_{\Omega,\e}(x,y)| \le \frac{C}{|x-y|^{d-1}},
\end{equation}
and
\begin{equation}\label{est_Poi2}
	|P_{\Omega,\e}(x,y)| \le \frac{C\txt{dist}(x,\partial\Omega)}{|x-y|^{d}}.
\end{equation}
Let $P_{\Omega}$ be the Poisson kernel associated with the homogenized operator $\cL_0$ in $\Omega$. Clearly, $P_{\Omega}$ possesses the same estimates (\ref{est_Poi1}) and (\ref{est_Poi2}).

Recall that the two-scale expansion of the Poisson kernel of $\cL_\e$ in $\Omega$ was established in \cite{KLS14},
\begin{equation}\label{eq_Poisson_exp}
	P_{\Omega,\e}^{\alpha\beta}(x,y) = P_{\Omega}^{\alpha\gamma}(x,y) \omega_\e^{\gamma\beta}(y) + R_\e^{\alpha\beta}(x,y) \qquad \txt{for } x\in \Omega, y\in \partial\Omega,
\end{equation}
where $R_\e$ is the remainder term satisfying
\begin{equation*}
	|R_\e(x,y)| \le \frac{C \e \ln(2+\e^{-1} |x-y|)}{|x-y|^d}.
\end{equation*}
The highly oscillating factor $\omega_\e(y)$ in (\ref{eq_Poisson_exp}) is given by
\begin{equation}\label{eq_omgExp}
	\omega_\e^{\gamma\beta}(y) =  h^{\gamma\nu }(y) \cdot n_k(y) n_{\ell}(y) \frac{\partial }{\partial y_\ell} \Phi_{\e,k}^{*\rho\nu}(y) \cdot a_{ij}^{\rho\beta}(y/\e) n_i(y)n_j(y),
\end{equation}
and $h(y)$ is the inverse matrix of $\widehat{a}_{ij}(y) n_i(y) n_j(y)$.

Let $u_\e$ be the solution of (\ref{eq_Leue}). By Poisson integral formula, we have
\begin{equation*}
	u_\e(x) = \int_{\partial\Omega} P_{\Omega,\e}(x,y) f(y,y/\e) d\sigma(y).
\end{equation*}
Note that (\ref{est_Poi2}) implies the Agmon-type maximum principle $\norm{u_\e}_{L^\infty(\Omega)} \le C\norm{f}_{L^\infty(\partial\Omega\times \T^d)}$, which we will often refer to. Define
\begin{equation*}
	\tilde{u}_\e(x) = \int_{\partial\Omega} P_{\Omega}(x,y) \omega_\e(y) f(y,y/\e) d\sigma(y).
\end{equation*}

\begin{lemma}\label{lem_uetue}
	Let $\Omega$ be a bounded $C^{2,\sigma}$ domain and let (\ref{cdn_ellipticity}), (\ref{cdn_periodicity}) and (\ref{cdn_smooth}) hold. Then
	\begin{equation*}
		\norm{u_\e - \tilde{u}_\e}_{L^q} \le C\e^{1/q}(1+|\ln\e|) \norm{f}_{L^\infty(\partial\Omega\times \T^d)}.
	\end{equation*}
	for any $1\le q<\infty$.
\end{lemma}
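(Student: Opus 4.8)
The plan is to use the expansion \eqref{eq_Poisson_exp} of the Poisson kernel directly. Writing $u_\e(x) = \int_{\partial\Omega} P_{\Omega,\e}(x,y) f(y,y/\e)\,d\sigma(y)$ and $\tilde u_\e(x) = \int_{\partial\Omega} P_\Omega(x,y)\omega_\e(y) f(y,y/\e)\,d\sigma(y)$, subtracting and invoking \eqref{eq_Poisson_exp} gives
\begin{equation*}
	u_\e(x) - \tilde u_\e(x) = \int_{\partial\Omega} R_\e(x,y)\, f(y,y/\e)\,d\sigma(y),
\end{equation*}
so the whole problem reduces to an $L^q(\Omega)$ estimate of the integral operator with kernel $R_\e$ applied to the bounded function $f(\cdot,\cdot/\e)$. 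Since $\norm{f}_{L^\infty(\partial\Omega\times\T^d)}$ factors out, it suffices to bound $\int_\Omega \Big( \int_{\partial\Omega} |R_\e(x,y)|\,d\sigma(y) \Big)^q dx$, using the pointwise bound $|R_\e(x,y)| \le C\e\ln(2+\e^{-1}|x-y|)\,|x-y|^{-d}$.

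The first key step is to estimate $g_\e(x) := \int_{\partial\Omega} |R_\e(x,y)|\,d\sigma(y)$ for fixed $x\in\Omega$, and here the relevant length scale is $\rho := \txt{dist}(x,\partial\Omega)$. I would split $\partial\Omega$ into the region $|x-y| \le \rho$ (where I instead use the bound $|P_{\Omega,\e}-P_\Omega\,\omega_\e|$ controlled by the Poisson-kernel size, or directly note $|R_\e(x,y)|\le C|x-y|^{1-d}$ since $\e\ln(2+\e^{-1}|x-y|)\lesssim |x-y|$ is false in general — rather I use that $R_\e$ inherits an $O(|x-y|^{1-d})$ bound from \eqref{est_Poi1}) and the region $|x-y| > \rho$. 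On the near region, since $\partial\Omega$ is $(d-1)$-dimensional and the nearest boundary point is at distance $\rho$, $\int_{|x-y|\le\rho} |x-y|^{1-d}\,d\sigma(y) \lesssim \rho^{0}\cdot 1$ up to logarithms — more carefully, $\int_{\rho\le |x-y|\le 1}$ of the surface measure against $|x-y|^{1-d}$ contributes $O(|\ln\rho|)$, and the genuine $R_\e$ bound $\e\ln(2+\e^{-1}|x-y|)|x-y|^{-d}$ integrated over $|x-y|\ge\rho$ on the $(d-1)$-surface gives $\int_\rho^{\mathrm{diam}} \e\ln(2+\e^{-1}r) r^{-d}\cdot r^{d-2}\,dr \lesssim \e\rho^{-1}\ln(2+\e^{-1})$. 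Combining the two regions yields the pointwise estimate
\begin{equation*}
	g_\e(x) \le C\e\,(1+|\ln\e|)\,\frac{1}{\txt{dist}(x,\partial\Omega)} + C\e(1+|\ln\e|),
\end{equation*}
where the first term dominates near $\partial\Omega$.

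The second key step is the layer-cake integration: with $\rho = \txt{dist}(x,\partial\Omega)$, one has $|\{x\in\Omega : \rho(x) < t\}| \le C t$ for $t$ small (tubular neighborhood of a $C^2$ boundary), hence $\rho^{-1} \in L^{q,\infty}(\Omega)$ but not $L^q(\Omega)$ for $q\ge 1$; however the factor $\e$ saves us, because $g_\e(x) \lesssim \e(1+|\ln\e|)\min\{\rho^{-1}, \e^{-1}\}$ once one observes $R_\e$ is also bounded by $C|x-y|^{1-d}$ so that $g_\e(x) \lesssim 1 + |\ln\rho|$ uniformly, giving the clean bound $g_\e(x) \lesssim (1+|\ln\e|)\min\{1+|\ln\rho|,\ \e\rho^{-1}\}$. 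Then $\int_\Omega g_\e^q \lesssim (1+|\ln\e|)^q \big[ \int_{\rho<\e} (1+|\ln\rho|)^q\,dx + \e^q\int_{\rho\ge\e}\rho^{-q}\,dx\big]$; the first integral is $\lesssim \e(1+|\ln\e|)^q$ and the second, using $|\{\rho\sim 2^{-j}\}|\lesssim 2^{-j}$, is $\lesssim \e^q \sum_{2^{-j}\ge\e} 2^{-j(1-q)} \lesssim \e^q\cdot\e^{1-q} = \e$ when $q>1$ (and $\lesssim \e|\ln\e|$ when $q=1$, absorbed into the log factor). Taking $q$-th roots gives $\norm{u_\e-\tilde u_\e}_{L^q(\Omega)} \le C\e^{1/q}(1+|\ln\e|)\norm{f}_{L^\infty}$, as claimed.

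The main obstacle is the bookkeeping of the two competing bounds on $R_\e$ — the $O(|x-y|^{1-d})$ bound near the diagonal versus the $O(\e\ln(2+\e^{-1}|x-y|)|x-y|^{-d})$ bound away from it — and making sure the logarithmic factors and the $\e$-dependence combine to exactly $\e^{1/q}(1+|\ln\e|)$ rather than something weaker; the geometric input (the surface-measure volume bound $\sigma(\partial\Omega\cap B(x,r))\lesssim r^{d-1}$ and the tubular-neighborhood estimate) is standard for $C^{2,\sigma}$ domains and causes no trouble.
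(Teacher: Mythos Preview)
Your approach is correct and is precisely what the paper points to: since the paper does not give a detailed argument but simply invokes the expansion \eqref{eq_Poisson_exp} and refers to \cite[Lemma~2.3]{ShenZhuge16}, the intended proof is exactly to write $u_\e-\tilde u_\e=\int_{\partial\Omega}R_\e(x,y)f(y,y/\e)\,d\sigma$, combine the two available bounds on $R_\e$ (the remainder bound $C\e\ln(2+\e^{-1}|x-y|)|x-y|^{-d}$ and the crude bound $C|x-y|^{1-d}$ coming from \eqref{est_Poi1} together with $|\omega_\e|\le C$), and integrate first over $\partial\Omega$ and then in the layer variable $\rho=\txt{dist}(x,\partial\Omega)$. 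Your log-counting at $q=1$ is slightly loose (the second integral actually produces $\e(1+|\ln\e|)^2$ rather than $\e(1+|\ln\e|)$ there), but this is harmless for the application, which only uses $q=2$.
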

This follows readily from (\ref{eq_Poisson_exp}) and a similar proof can be found in \cite[Lemma 2.3]{ShenZhuge16}. Thanks to Lemma \ref{lem_uetue}, the estimate for $\norm{u_\e - u_0}_{L^2(\Omega)}$ is reduced to $\norm{\tilde{u}_\e - u_0}_{L^2(\Omega)}$.

\section{Quantification of hypersurfaces of finite type}
\subsection{Quantification of non-flatness}
In this section, we develop a new idea involving the Diophantine condition to quantify the non-flatness of hypersurfaces of a domain which could be used in homogenization of oscillating boundary value problems. It turns out that this idea has a close connection with the sublevel set estimate and van de Corput's lemma in the theory of oscillatory integrals.

To begin with, we recall the Diophantine condition for a unit vector $n \in \bS^{d-1}$.

\begin{definition}\label{def_Dioph}
	Given $n\in \bS^{d-1}$. We say $n$ satisfies the Diophantine condition with some fixed $\mu>0$, if there exists some constant $C > 0$ such that\footnote{The matrix $I-n\otimes n$ is the orthogonal projection onto $n^\perp$.}
	\begin{equation}\label{cdn_Diophantine}
		|(I - n\otimes n)\xi| \ge C |\xi|^{-\mu} \qquad \txt{for all } \xi\in \Z^d\setminus \{0\}.
	\end{equation}
	We call $\varkappa = \varkappa(n)$ the Diophantine constant if it is the largest constant such that (\ref{cdn_Diophantine}) holds. As a function of $n$, $\varkappa(n)$ will also be called the Diophantine function on $\bS^{d-1}$.
\end{definition}

One can slightly generalize the Diophantine function from $\bS^{d-1}$ to any closed smooth hypesurface $S$ by considering $ \varkappa\circ n(x)$ for all $x\in S$, where $n(x)$ is the outer normal to $S$ at $x$. As we have mentioned in Introduction, the fact $(\varkappa\circ n)^{-1} \in L^{d-1,\infty}(\partial\Omega)$ played a crucial role in oscillating boundary value problems in strictly convex domains considered in \cite{GerMas12,AKMP16,ShenZhuge16}. For general smooth compact hypersurfaces, one might also ask if there is still some possible $p$ less than $d-1$ such that $(\varkappa\circ n)^{-1} \in L^{p,\infty}(S,d\sigma)$ and what condition would exactly guarantee this. In this section, we will give positive answers to these two questions.

As stated in Introduction, for any $x_0\in S$, we can translate and rotate the hypersurface so that $S$ is given by its local graph $x_d = \phi(x')  = \phi_{x_0}(x')$ near $x_0$.

\begin{proposition}\label{prop_weakp}
	Let $S$ be a closed smooth hypersurface in $\R^d$. The following statements are equivalent:
	
	(i) there is some $\mu>0$ so that $(\varkappa \circ On(\cdot))^{-1} \in L^{p,\infty}(S,d\sigma)$ uniformly for any orthogonal matrix $O$;
	
	(ii) the function $h_{\omega}$ defined below satisfies
	\begin{equation}\label{cdn_h_weakp}
		h_\omega(x) := \frac{1}{\sqrt{1-[\omega\cdot n(x)]^2}} \in L^{p,\infty}(S,d\sigma),
	\end{equation}
	uniformly for any $\omega\in \bS^{d-1}$;
	
	(iii) there exist some $r_0>0$ and $C_0>0$ such that for all $x_0 \in S$, its local graph satisfies
	\begin{equation}\label{est_Criteria}
		\sigma\{ x'\in B(0,r_0) \subset \R^{d-1}: |\nabla \phi_{x_0}(x')| \le t \} \le C_0t^p.
	\end{equation}
	for all $0<t<1$.
\end{proposition}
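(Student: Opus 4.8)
The plan is to prove the cycle of implications $(iii) \Rightarrow (ii) \Rightarrow (i) \Rightarrow (iii)$, with the analytic heart being the equivalence between a weak-type $L^{p,\infty}$ bound for a function on $S$ and a uniform sublevel set estimate for its local graphs. First I would record the elementary geometric dictionary: if, near $x_0\in S$, the hypersurface is the graph $x_d = \phi_{x_0}(x')$ with $\phi_{x_0}(0)=0$ and $\nabla\phi_{x_0}(0)=0$, then in these coordinates the outer unit normal is $n = (-\nabla\phi_{x_0}(x'),1)/\sqrt{1+|\nabla\phi_{x_0}(x')|^2}$, and for $\omega\in\bS^{d-1}$ a short computation shows $1-[\omega\cdot n(x)]^2$ is, up to constants depending only on $\norm{\phi_{x_0}}_{C^1}$, comparable to $|\nabla\phi_{x_0}(x')|^2$ when $\omega$ is (a rotate of) $e_d$; more generally, for a fixed $\omega$ one rotates so that $\omega = e_d$ and the relevant graph is $\phi_{x_0}$ for the rotated surface, which is exactly why the orthogonal matrix $O$ appears in $(i)$. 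Thus $h_\omega(x) \asymp |\nabla\phi_{x_0}(x')|^{-1}$ locally, and since $S$ is compact it is covered by finitely many such coordinate patches with uniform radius $r_0$ and uniform $C^1$ (indeed $C^\infty$) bounds by Definition \ref{def_Omg}'s footnote. This reduces $(ii)\Leftrightarrow(iii)$ to the statement that $\norm{|\nabla\phi|^{-1}}_{L^{p,\infty}(B(0,r_0))} < \infty$ is equivalent to $\sigma\{|\nabla\phi|\le t\}\le C_0 t^p$ for $0<t<1$, which is just the definition of the Lorentz quasinorm $\norm{g}_{L^{p,\infty}} = \sup_{t>0} t\,|\{|g|>t\}|^{1/p}$ applied to $g = |\nabla\phi|^{-1}$ (the condition $t<1$ suffices because $|\nabla\phi|\le \norm{\phi}_{C^1}$ forces the superlevel sets of $g$ to stabilize for large $t$, and the measure of $B(0,r_0)$ is finite).

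For $(i)\Leftrightarrow(ii)$ I would invoke the classical number-theoretic fact quantifying the Diophantine function: for $n\in\bS^{d-1}$ and $\omega\in\bS^{d-1}$ lying in a fixed small neighborhood, $\varkappa(n)$ is comparable (with the right choice of exponent $\mu = \mu(d)$) to $\prod$-type expressions in the distances of $n$ to rational directions, and in particular the \emph{distribution function} of $(\varkappa\circ On)^{-1}$ with respect to surface measure is controlled, level set by level set, by the measure of $\{x\in S: 1 - [\omega\cdot n(x)]^2 \le t^2\}$ after summing a convergent series over the lattice $\Z^d\setminus\{0\}$; here one uses that the contribution of a lattice point $\xi$ to a small normal cone is governed precisely by $|(I-n\otimes n)\xi|$, i.e.\ by how nearly $\xi/|\xi|$ is parallel to $n$, which is $h_{\xi/|\xi|}$-type data. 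This is the step that genuinely encodes ``the connection with van der Corput's lemma and the sublevel set estimate'' mentioned in the introduction, and it is the one I expect to be the main obstacle: making the passage between ``$\varkappa^{-1}$ is large'' and ``$n$ is abnormally close to \emph{some} rational direction $\xi/|\xi|$'' quantitative and uniform in $O$ requires a careful dyadic decomposition of $\Z^d$ by the size $|\xi|$, a counting of lattice points in thin caps, and verification that the resulting geometric series converges for an appropriate $\mu$ — essentially reproving, in weak-$L^p$ form, the $L^{d-1,\infty}$ result of \cite{GerMas12} but now tracking the exponent $p$ abstractly rather than pinning it to strict convexity.

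Finally I would assemble the three pieces: $(iii)\Rightarrow(ii)$ and $(ii)\Rightarrow(iii)$ by the Lorentz-norm/sublevel-set identity together with the local comparison $h_\omega\asymp|\nabla\phi_{x_0}|^{-1}$ and a finite-cover argument (taking care that the implied constants combine, since $L^{p,\infty}$ is only a quasinorm — one uses the standard fact that a finite sum of $L^{p,\infty}$ functions is $L^{p,\infty}$ with a loss depending only on the number of summands, hence on the fixed number of coordinate patches); $(ii)\Rightarrow(i)$ and $(i)\Rightarrow(ii)$ by the lattice-sum comparison of the previous paragraph, noting that uniformity over orthogonal $O$ on the $(i)$ side matches uniformity over $\omega\in\bS^{d-1}$ on the $(ii)$ side because rotating $S$ by $O$ and looking at the fixed normal direction $e_d$ is the same as fixing $S$ and letting $\omega = O^{-1}e_d$ range over $\bS^{d-1}$. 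One should also remark that $p$ is automatically at most $d-1$: since $\nabla\phi_{x_0}$ is a $C^\infty$ map from a $(d-1)$-dimensional ball with $\nabla\phi_{x_0}(0)=0$, its values near $0$ fill a set of positive $(d-1)$-measure in any ball around the origin, forcing $\sigma\{|\nabla\phi_{x_0}|\le t\}\gtrsim t^{d-1}$ generically, so the exponent in \eqref{est_Criteria} can never beat $d-1$ — consistent with the strictly convex case $p = d-1$.
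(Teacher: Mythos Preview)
Your outline has the right architecture but misjudges where the real work lies, and your $(iii)\Rightarrow(ii)$ step has a genuine gap.

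\textbf{The gap in $(iii)\Rightarrow(ii)$.} You claim that in local coordinates $h_\omega(x)\asymp |\nabla\phi_{x_0}(x')|^{-1}$, and that ``for a fixed $\omega$ one rotates so that $\omega=e_d$ and the relevant graph is $\phi_{x_0}$ for the rotated surface.'' But the local graph $\phi_{x_0}$ is defined by rotating so that $n(x_0)=e_d$, not so that $\omega=e_d$. Your comparison $1-(\omega\cdot n)^2\asymp |\nabla\phi_{x_0}|^2$ is correct \emph{only} when $\omega=n(x_0)$; for a general $\omega\in\bS^{d-1}$ there is no reason $\omega$ is the normal at any point of $S$, and even if it is, the sublevel set $\{h_\omega>t^{-1}\}$ need not sit in a single patch centered at such a point. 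The paper handles this by contradiction: if $(ii)$ fails, pick $\omega$ and $t$ with $\sigma\{x:\sqrt{1-(\omega\cdot n(x))^2}<t\}\ge Mt^p$, then by pigeonhole find $x_0$ in this set with a comparable local contribution. The key move is the spherical triangle inequality
\[
\sqrt{1-(n(x_0)\cdot n(x))^2}\le |n(x_0)-\omega|+|\omega-n(x)|\le 2\sqrt{1-(n(x_0)\cdot\omega)^2}+2\sqrt{1-(\omega\cdot n(x))^2}<4t,
\]
which lets you replace the arbitrary $\omega$ by $n(x_0)$ at the cost of a factor $4$ in $t$. \emph{Now} $\omega$ is a genuine normal and the local graph at $x_0$ applies. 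Your proposal is missing this replacement step.

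\textbf{The misjudged difficulty of $(i)\Leftrightarrow(ii)$.} You describe this as the ``main obstacle,'' anticipating dyadic decompositions of $\Z^d$ and cap-counting. In fact it is the easy half, and no van der Corput enters. The single identity you need is
\[
\sqrt{1-(\omega\cdot n)^2}=|(I-n\otimes n)\omega|,\qquad \omega,n\in\bS^{d-1}.
\]
For $(i)\Rightarrow(ii)$: given $\omega$, pick $O$ orthogonal with $O^t e_1=\omega$; then $\sqrt{1-(\omega\cdot n)^2}=|(I-On\otimes On)e_1|\ge \varkappa(On)$ directly from the definition of $\varkappa$ applied to $\xi=e_1$, and the weak-type bound follows. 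For $(ii)\Rightarrow(i)$: from the definition,
\[
\{\varkappa(On)<t\}\subset \bigcup_{\xi\in\Z^d\setminus\{0\}}\{|(I-On\otimes On)\xi|<t|\xi|^{-\mu}\},
\]
and each set on the right has measure $\le Ct^p|\xi|^{-p(1+\mu)}$ by $(ii)$ applied to $\omega=\xi/|\xi|$. Choose $\mu$ large enough that $p(1+\mu)>d$ and sum. That is the whole argument; there is nothing to count in caps.
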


\begin{proof}
	First, we show that (i) is equivalent to (ii). We assume the statement (i) holds. For a fixed $\omega \in \bS^{d-1}$, we can find an orthogonal matrix $O$ such that $\omega$ is the first column of $O^t$, where $O^t$ is the transpose of $O$. This implies that $\omega \cdot n = e_1\cdot On$. Now a key observation is 
	\begin{equation}\label{cdn_h_til}
		\sqrt{1-[\omega\cdot n]^2} = |(I-n\otimes n) \omega|, \quad \forall \omega, n\in \bS^{d-1}.
	\end{equation}
	It follows
	\begin{equation*}
		\sqrt{1-[\omega\cdot n]^2} = \sqrt{1-[e_1\cdot On]^2} = |(I-On\otimes On) e_1| \ge \varkappa\circ On |e_1|^{-\mu}.
	\end{equation*}
	As a result
	\begin{equation*}
		\sigma \{x\in S: \sqrt{1-[\omega\cdot n(x)]^2} < t\} \le \sigma \{x\in S: \varkappa\circ On(x) < t \} \le Ct^p,
	\end{equation*}
	where in the second inequality we used the condition $(\varkappa\circ On)^{-1}\in L^{p,\infty}(S,d\sigma)$. Clearly, this implies that $h_\omega$ is in $L^{p,\infty}(S,d\sigma)$, uniformly.
	
	Now we assume that the statement (ii) holds.
	Let $0<t<1$. Observe that
	\begin{equation}
		\{x\in\partial\Omega: (\varkappa\circ O n(x))^{-1} > t^{-1}\} \subset S_{t} = \bigcup_{\xi \in \Z^d \setminus\{ 0 \} } \{ x\in \partial\Omega: |(I-On\otimes On)\xi| < t |\xi|^{-\mu} \}.
	\end{equation}
	Using (\ref{cdn_h_weakp}) and (\ref{cdn_h_til}), we have
	\begin{align*}
		&\sigma\{x\in \partial\Omega: |(I-On\otimes On)\xi| < t |\xi|^{-\mu} \} \\
		&\qquad = \sigma\{x\in \partial\Omega: |(I-On\otimes On)\omega| < t |\xi|^{-1-\mu}, \omega = |\xi|^{-1}\xi \} \\
		&\qquad = \sigma\{x\in \partial\Omega: \sqrt{1-[O^t\omega\cdot n(x)]^2} < t |\xi|^{-1-\mu}, \omega = |\xi|^{-1}\xi \} \\
		& \qquad \le Ct^{p} |\xi|^{-p(1+\mu)}.
	\end{align*}
	Now we choose $\mu$ sufficiently large so that 
	$p(1+\mu)> d$. Then it follows
	\begin{equation*}
		\sigma \{x\in\partial\Omega: (\varkappa \circ n(x))^{-1} > t^{-1}\} \le \sigma(S_t) \le C t^p,
	\end{equation*}
	for any $0<t<1$. This finishes the proof of equivalence between (i) and (ii).
	
	Next we show that (ii) is equivalent to (iii). Assume that (ii) is true. Let $x_0$ be a point on $S$ and $x_d = \phi(x') = \phi_{x_0}(x')$ be the local graph of $S \cap B(x_0,r_0)$, where $r_0$ is given in Definition \ref{def_Omg}. Recall that by definition, $\phi(0) = 0$ and $\nabla \phi(0) = 0$. Note that in local coordinates
	\begin{equation}\label{eq_locn}
		n(x') = \frac{(\nabla\phi(x'),-1)}{\sqrt{1+|\nabla\phi(x')|^2}}.
	\end{equation}
	Put $w = e_d$. Then, observe that
	\begin{equation}\label{eq_nw_dphi}
		1 - (n(x')\cdot \omega)^2 = \frac{|\nabla \phi(x')|^2 }{1+|\nabla\phi(x')|^2}.
	\end{equation}
	It follows from (ii) that
	\begin{equation*}
		\sigma\{ x'\in B(0,r_0): |\nabla \phi(x')| < t \} \le \sigma\{ x'\in B(0,r_0): \sqrt{1-(n(x')\cdot \omega)^2} < t \} \le Ct^p.\\
	\end{equation*}
	
	On the contrary, we now assume that the statement (ii) is false, which means that for any large $M>0$, there exist $w$ and $t$ (depending on $M$) such that
	\begin{equation*}
		\sigma \{x\in S: \sqrt{1-[\omega\cdot n(x)]^2} < t \} \ge Mt^p.
	\end{equation*}
	Fix such $w$ and $t$. Let $r_0$ be given by Definition \ref{def_Omg}. Then it is not hard to see that there exists some point $x_0 \in S$ such that
	\begin{equation}\label{est_1-nx0}
		\sqrt{1-[\omega\cdot n(x_0)]^2} < t,
	\end{equation}
	and
	\begin{equation*}
		\sigma \{x\in S\cap B(x_0,r_0): \sqrt{1-[\omega\cdot n(x)]^2} < t \} \ge C^{-1}Mt^p,
	\end{equation*}
	where $C$ depends only on $S$. Without loss of generality, we may assume $\omega\cdot n(x) \ge 0$ for $x$ in the set involved above. Now we have a simple observation: for any $u,v\in \bS^{d-1}$ and $u\cdot v \ge 0$,
	\begin{equation*}
		\sqrt{1-(u\cdot v)^2} \le |u-v| \le 2\sqrt{1-(u\cdot v)^2}.
	\end{equation*}
	As a result,
	\begin{equation*}
		\begin{aligned}
			\sqrt{1-(n(x_0)\cdot n(x))^2} & \le |u(x_0)-n(x)| \\
			& \le |u(x_0)-\omega| + |\omega-n(x)| \\
			& \le 2\sqrt{1-(n(x_0)\cdot \omega)^2} + 2\sqrt{1-(\omega\cdot n(x))^2} \\
			& < 4t,
		\end{aligned}
	\end{equation*}
	for all $x\in \{y\in S\cap B(x_0,r_0): \sqrt{1-[\omega\cdot n(y)]^2} < t \}$, where we also used (\ref{est_1-nx0}) in the last inequality. It follows that
	\begin{equation}\label{est_1-n0n}
		\begin{aligned}
			&\sigma \{x\in S\cap B(x_0,r_0): \sqrt{1-[n(x_0)\cdot n(x)]^2} < 4t \} \\
			&\qquad\qquad \ge \sigma \{x\in S\cap B(x_0,r_0): \sqrt{1-[\omega\cdot n(x)]^2} < t \} \ge C^{-1}M t^p.
		\end{aligned}
	\end{equation}
	Again, we apply the local graph $x_d = \phi(x') = \phi_{x_0}(x')$ at $x_0$ and use (\ref{eq_locn}) and (\ref{eq_nw_dphi}) to obtain
	\begin{equation*}
		\begin{aligned}
			&\sigma \{x\in S\cap B(x_0,r_0): \sqrt{1-[n(x_0)\cdot n(x)]^2} < 4t \} \\
			& \qquad \le C \sigma \{x'\in B(x_0,r_0): \sqrt{1-[e_d \cdot n(x')]^2} < 4t \} \\
			& \qquad \le C \sigma \{x'\in B(x_0,r_0):  |\nabla \phi(x')| < Ct \}.
		\end{aligned}
	\end{equation*}
	This, together with (\ref{est_1-n0n}), leads to
	\begin{equation*}
		\sigma \{x'\in B(x_0,r_0):  |\nabla \phi(x')| < Ct \} \ge C^{-1}Mt^p.
	\end{equation*}
	Since $M$ can be arbitrarily large, the last inequality contradicts to (iii). This completes the proof.
\end{proof}

A straightforward application of Proposition \ref{prop_weakp} is to verify that every strictly convex domain satisfies the statements in Proposition \ref{prop_weakp} with $p = d-1$. Actually, they are even sufficient and necessary condition of each other.

\begin{proposition}\label{prop_convex_Lp}
	A closed smooth hypersurface in $\R^d$ is strictly convex if and only if there is some $\mu>0$ so that $(\varkappa \circ On(\cdot))^{-1} \in L^{d-1,\infty}(S,d\sigma)$ uniformly for any orthogonal matrix $O$.
\end{proposition}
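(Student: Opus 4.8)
The plan is to reduce the statement to the criterion (iii) in Proposition~\ref{prop_weakp} applied with $p = d-1$, and then to show that the sublevel set estimate
\[
\sigma\{ x'\in B(0,r_0)\subset\R^{d-1}: |\nabla\phi_{x_0}(x')| \le t \} \le C_0 t^{d-1}
\]
holds uniformly in $x_0$ \emph{if and only if} $S$ is strictly convex. So by Proposition~\ref{prop_weakp} it suffices to prove this equivalence between strict convexity and the uniform $(d-1)$-sublevel bound on $|\nabla\phi_{x_0}|$.

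For the ``only if'' direction, suppose $S$ is strictly convex. After translating and rotating, the local graph $\phi = \phi_{x_0}$ satisfies $\phi(0)=0$, $\nabla\phi(0)=0$, and strict convexity gives a uniform lower bound $\nabla^2\phi(x') \ge c\, I$ (as quadratic forms) for $x'\in B(0,r_0)$, where $c>0$ and $r_0>0$ can be taken uniform over $x_0\in S$ by compactness. The map $\Psi := \nabla\phi : B(0,r_0)\to\R^{d-1}$ then has $D\Psi = \nabla^2\phi \ge cI$, so $\Psi$ is a $C^1$ diffeomorphism onto its image with $|\det D\Psi| \ge c^{d-1}$. Since $\Psi(0)=0$, the set $\{|\nabla\phi(x')| \le t\}$ is mapped by $\Psi$ into the ball $B(0,t)\subset\R^{d-1}$, whose volume is $C t^{d-1}$; by the change of variables formula and the Jacobian lower bound, the preimage has measure at most $c^{-(d-1)} C t^{d-1}$. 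This gives (\ref{est_Criteria}) with $p = d-1$, uniformly, and hence (i) with $p = d-1$.

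For the ``if'' direction, I would argue by contraposition: if $S$ is not strictly convex, there is some $x_0\in S$ at which the second fundamental form is degenerate, i.e.\ in the local graph coordinates $\nabla^2\phi_{x_0}(0)$ has a nontrivial kernel, say spanned by a unit vector $v\in\R^{d-1}$. The idea is that $\nabla\phi$ then vanishes to higher order than first along (a curved version of) the line $\R v$, which makes the sublevel sets $\{|\nabla\phi|\le t\}$ too fat to satisfy a $t^{d-1}$ bound. Concretely, one shows that for $x'$ in a thin slab around this direction, $|\nabla\phi(x')| \lesssim |x'|^2 + (\text{distance from the degenerate curve})$, so $\{|\nabla\phi|\le t\}$ contains a region of dimensions roughly $\sqrt t$ in the $v$ direction and $t$ in each of the remaining $d-2$ transverse directions, giving measure $\gtrsim t^{1/2}\cdot t^{d-2} = t^{d-3/2} \gg t^{d-1}$ as $t\to 0$. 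This contradicts (\ref{est_Criteria}) with $p = d-1$, and by the equivalence (iii)$\Leftrightarrow$(i) in Proposition~\ref{prop_weakp}, it contradicts the weak-$L^{d-1}$ statement. Hence that statement forces strict convexity.

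The main obstacle is the quantitative lower bound on the measure of the sublevel set in the non-strictly-convex case. The difficulty is that a single vanishing eigenvalue of $\nabla^2\phi$ at one point does not immediately produce a one-parameter family of near-critical points of $\nabla\phi$; one must track how the kernel direction propagates. I expect the cleanest route is to fix the degenerate direction $v$, restrict to the $2$-plane spanned by $v$ and $e_d$ intersected with $S$ to get a plane curve with an inflection point (zero curvature) at $x_0$, and use that the Gauss map of that curve is not locally injective there—this is exactly where the ``$\sqrt t$ worth'' of the set with $|\nabla\phi|\le t$ comes from—while the $C^2$ lower bounds on the other principal curvatures (or, if several vanish, a symmetric argument) control the remaining $d-2$ directions with the expected $t$-scaling. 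An alternative, possibly shorter, packaging: note $\varkappa\circ On$ is continuous and, at a non-strictly-convex point, one can find directions $\omega$ realizing $1-(\omega\cdot n)^2$ arbitrarily small on sets of measure exceeding any constant times $t^{d-1}$, directly violating (ii). Either way the degenerate-curvature estimate is the crux.
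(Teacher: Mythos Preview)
Your proposal is correct and follows the same overall route as the paper: reduce both directions to criterion~(iii) of Proposition~\ref{prop_weakp} with $p=d-1$. For the ``only if'' direction the paper uses the slightly shorter mean value theorem argument $|\nabla\phi_{x_0}(x')| = |\nabla^2\phi_{x_0}(\xi)\,x'| \ge c|x'|$ instead of your Gauss-map Jacobian bound, but your version is equally valid.

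For the ``if'' direction, though, you are overthinking the difficulty. Your stated ``main obstacle''---that a single vanishing eigenvalue of $\nabla^2\phi$ at one point does not immediately produce a one-parameter family of near-critical points, and that one must ``track how the kernel direction propagates''---is not actually an obstacle. A single second-order Taylor expansion at the degenerate point suffices, exactly as the paper does: diagonalize $\nabla^2\phi_{x_0}(0)$ so the kernel direction is $e_{d-1}$, write $x'=(x'',x_{d-1})$, and use
\[
\nabla\phi_{x_0}(x') = \nabla^2\phi_{x_0}(0)\,x' + O(|x'|^2), \qquad |\nabla^2\phi_{x_0}(0)\,x'| \le C|x''|.
\]
Then $\{|\nabla\phi_{x_0}|\le t\}$ contains $\{C|x''|+C|x'|^2 \le t\}$, which in turn contains the slab $\{|x''|\lesssim t,\ |x_{d-1}|\lesssim \sqrt{t}\}$ of measure $\gtrsim t^{d-2+1/2}$. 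The $O(|x'|^2)$ remainder is what automatically gives you the $\sqrt{t}$ worth of room in the degenerate direction; there is no need for plane curves, inflection-point analysis, or tracking of the kernel as you move away from $x_0$.
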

\begin{proof}
	First we assume that $S$ is strictly convex. It is sufficient to verify (iii) of Proposition \ref{prop_weakp}. For any fixed $x_0\in S$, let $\phi_{x_0}$ be the local graph of $S$ at $x_0$. Since $S$ is strictly convex, the Hessian matrix $\nabla^2 \phi_{x_0}(0)$ is positive definite and its eigenvalues are bounded below uniformly in $x_0$. It follows from the mean value theorem that
	\begin{equation*}
		|\nabla \phi_{x_0}(x')| = |\nabla^2 \phi_{x_0}(\xi) x' | \ge c|x'|,
	\end{equation*}
	if $|x|<r_0$ for some sufficiently small $r_0$ depending only on $S$. Hence
	\begin{equation*}
		\sigma\{ x'\in B(0,r_0): |\nabla \phi_{x_0}(x')| < t \} \le \sigma\{ x'\in B(0,r_0): |x'| < Ct \} \le Ct^{d-1}.
	\end{equation*}
	This proves the statement (iii) with $p = d-1$.
	
	On the contrary, suppose that $S$ is not strictly convex, then there must be some point $x_0\in S$ such that the Hessian matrix $\nabla^2 \phi_{x_0}(0)$ is degenerate. Without loss of generality, we can rotate the coordinates such that $\nabla^2 \phi_{x_0}(0)$ is diagonal, i.e.,
	\begin{equation*}
		\nabla^2 \phi_{x_0}(0) =
		\begin{bmatrix}
			\lambda_1 & & &\\
			& \ddots & &\\
			& & \lambda_{d-2} &\\
			& & & 0 
		\end{bmatrix}.
	\end{equation*}
	Now writing $x' = (x'',x_{d-1}), x''\in \R^{d-2}$, we see that $|\nabla^2 \phi_{x_0}(0)x'| \le C|x''|$.
	Thus, in view of $\nabla \phi_{x_0}(x') = \nabla^2 \phi_{x_0}(0) x' + O(|x'|^2)$, we have
	\begin{equation*}
		\begin{aligned}
			\{x'\in B(0,r_0): |\nabla \phi_{x_0}(x')| < t \} &\supset \{x'\in B(0,r_0): |\nabla^2\phi_{x_0}(0) x'|  + C|x'|^2 < t\} \\
			& \supset \{x'\in B(0,r_0): C|x''|  + C|x'|^2 < t\} \\
			& \supset \{x'\in B(0,r_0): C|x''| < t, C|x_{d-1}|^2 < t \}
		\end{aligned}
	\end{equation*}
	for sufficiently small $t$. It follows that,
	\begin{equation*}
		\sigma \{x'\in B(0,r_0): |\nabla \phi_{x_0}(x')| < t \} \ge C^{-1} t^{d-2+1/2}
	\end{equation*}
	for some constant $C$ and for sufficiently small $t$. This contradicts to (iii) of Proposition \ref{prop_weakp} with $p = d-1$ and hence the proof is complete.
\end{proof}

\begin{remark}
	The statement (i) is stronger than $(\varkappa\circ n)^{-1} \in L^{p,\infty}(S,d\sigma)$ since it requires rotation invariance for the hypersurfaces, which actually rules out any hypersurfaces containing a non-trivial portion of an affine hypersurface. This is even clear if we notice that the statement (iii), a type of sublevel set estimate, alleges that the hypersurface must bend at a certain degree at each point. However, it is possible to have a flat portion on $S$ so that $(\varkappa\circ n)^{-1}$ is in $L^{p,\infty}(S,d\sigma)$. In fact, if $D$ is the union of all the flat portions of $S$ whose normals satisfy the Diophantine condition, then it suffices to consider the remaining portion $S\setminus D$. Fortunately, with slight modification in (iii), Proposition \ref{prop_weakp} still holds with $S$ replaced by $S\setminus D$.
\end{remark}

\subsection{Hypersurfaces of finite type}
In what follows, we will consider the hypersurfaces of finite type. Note that Definition \ref{def_Omg} is given in a pure analytical way. To see that this condition is relatively mild, we state an equivalent geometrical definition: a compact hypersurface $S$ is of finite type if at least one of the principle curvatures of $S$ does not vanish to infinite order, uniformly at each point. In view of this, we remark that, except for $d-1$ dimensional linear submanifolds (i.e., a portion of an affine hypersurface), some typical cases are allowed for hypersurfaces of finite type, including disconnected hypersurfaces (related to multiply connected domains), saddle points or saddle surfaces and lower dimensional linear submanifolds (such as the side surface of a 3 dimensional cylinder). Also, we mention that any compact real-analytic hypersurface not lying in any affine hypersurface must be of finite type; see \cite{Stein93}.

The next theorem, in connection with the well-known van de Corput's lemma and sublevel set estimate, indicates the importance of the notion of finite type for our application.
\begin{theorem}\label{thm_VanSublevel}
	Let $\phi$ be a smooth function of type $k$ in some sphere $B \subset \R^d$, then
	
	(i) (van de Corput) for any $\psi \in C_0^\infty(B)$,
	\begin{equation}\label{est_van}
		\bigg| \int_{\R^d} e^{i\lambda \phi(x)} \psi(x) dx \bigg| \le C \lambda^{-1/k} \norm{\nabla \psi}_{L^1(B)},
	\end{equation}
	where the constant depends only on $d,k,\delta$ and $\norm{\phi}_{C^{k+1}(B)}$.
	
	(ii) (Sublevel set estimate) for any $t>0$,
	\begin{equation}\label{est_sublevel}
		\sigma \{ x\in (1/2)B: |\phi(x)| \le t \} \le C t^{1/k},
	\end{equation}
	where the constant depends only on $d,k,\delta$ and $\norm{\phi}_{C^{k+1}(B)}$.
\end{theorem}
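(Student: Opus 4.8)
The plan is to reduce both assertions to one spatial variable and then invoke the classical one–dimensional estimates of van der Corput type (see \cite{Stein93}). The first step is to convert the hypothesis $\abs{\partial^\alpha\phi}\ge\delta$ on $B$, for the given multi-index $\alpha$ with $j:=\abs\alpha\le k$, into a statement about \emph{pure} directional derivatives. Using the polarization identity, $\partial^\alpha$ can be written as a finite linear combination $\partial^\alpha=\sum_{\ell=1}^N c_\ell(\omega_\ell\cdot\nabla)^j$ with $\{\omega_\ell\}\subset\bS^{d-1}$ a fixed set depending only on $d$ and $j$; hence at every $x\in B$ at least one $\abs{(\omega_\ell\cdot\nabla)^j\phi(x)}$ is $\ge\delta/(N\max_\ell\abs{c_\ell})$. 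Since $\phi\in C^{k+1}$, a continuity and compactness argument then produces a finite open cover $\{U_\ell\}_{\ell=1}^N$ of $\overline B$, a subordinate smooth partition of unity $\{\eta_\ell\}$, and a constant $c_0>0$ — all controlled by $d,k,\delta,\norm{\phi}_{C^{k+1}(B)}$ alone — with $\abs{(\omega_\ell\cdot\nabla)^j\phi}\ge c_0$ on $U_\ell$. On $U_\ell$ I would foliate by lines parallel to $\omega_\ell$: writing $x=(y,s)$ with $s$ the coordinate along $\omega_\ell$, the slice $g_y(s):=\phi(y,s)$ satisfies $\abs{g_y^{(j)}(s)}\ge c_0$ on each subinterval of the line lying in $U_\ell$.

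The second step is the one–dimensional core, proved by induction on the order $m$ ($1\le m\le k$) of the derivative that is bounded below: if $g\in C^m(I)$ with $\abs{g^{(m)}}\ge c$ on an interval $I$, then (a) $\abs{\{s\in I:\abs{g(s)}\le t\}}\le C(m)\,c^{-1/m}t^{1/m}$ for all $t>0$; and (b) provided in addition $m\ge2$, or $m=1$ and $g'$ is monotone, $\bigl|\int_{\tau_1}^{\tau_2}e^{i\lambda g(s)}\,ds\bigr|\le C(m)\,c^{-1/m}\lambda^{-1/m}$ for every $[\tau_1,\tau_2]\subset I$, whence by an integration by parts $\bigl|\int_I e^{i\lambda g}v\bigr|\le C(m)\,c^{-1/m}\lambda^{-1/m}\norm{v'}_{L^1(I)}$ for $v\in C_0^\infty(I)$ (the boundary term vanishes since $v$ has compact support). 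The base case $m=1$ uses that $\abs{g'}\ge c$ with $g'$ continuous forces $g'$ to have constant sign, so $g$ is strictly monotone: the sublevel set is then a single interval of length $\le2t/c$, and (b) is the classical first–derivative lemma. In the inductive step, $\abs{g^{(m)}}\ge c$ makes $g^{(m-1)}$ strictly monotone with at most one zero $s_\ast\in\overline I$; on $I\setminus(s_\ast-\kappa,s_\ast+\kappa)$ one has $\abs{g^{(m-1)}}\ge c\kappa$ with $g^{(m-1)}$ still monotone there — exactly what is needed to invoke the case $m-1$ — producing a bound of the form $2\kappa+C(m-1)(c\kappa)^{-1/(m-1)}(\,\cdot\,)^{1/(m-1)}$, optimized by $\kappa\sim c^{-1/m}(\,\cdot\,)^{1/m}$, which yields the stated power.

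With the scalar estimates in hand I would reassemble. For \eqref{est_sublevel}: on each $U_\ell$, Fubini reduces $\sigma\bigl(\{x\in\tfrac12 B:\abs{\phi(x)}\le t\}\cap U_\ell\bigr)$ to an integral over $y$ of the one–dimensional sublevel measures; each slice meets $U_\ell$ in a uniformly bounded number of intervals (as $\phi\in C^{k+1}$ and $U_\ell$ is a fixed open set), on each of which (a) gives measure $\le Cc_0^{-1/j}t^{1/j}$; summing over $\ell$ and using $t^{1/j}\le t^{1/k}$ for $0<t<1$ (the case $t\ge1$ being trivial) gives \eqref{est_sublevel}. For \eqref{est_van}: write $\psi=\sum_\ell\psi\eta_\ell$; on $U_\ell$ slice along $\omega_\ell$, observing that within each slice the amplitude $(\psi\eta_\ell)(y,\cdot)$ is compactly supported inside the relevant subintervals because $\mathrm{supp}(\psi\eta_\ell)\subset U_\ell$ is at positive distance from $\partial U_\ell$; then (b), applied slice by slice and integrated in $y$, yields $\bigl|\int e^{i\lambda\phi}\psi\eta_\ell\bigr|\le C\lambda^{-1/j}\norm{\partial_{\omega_\ell}(\psi\eta_\ell)}_{L^1}\le C\lambda^{-1/j}\norm{\nabla(\psi\eta_\ell)}_{L^1(B)}$, and summing over $\ell$ with $\lambda^{-1/j}\le\lambda^{-1/k}$ produces \eqref{est_van}.

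The genuinely delicate point is the first step: passing from the single mixed derivative $\partial^\alpha\phi$ being bounded below on all of $B$ to a \emph{uniform} lower bound, on a finite cover, of a pure high–order directional derivative chosen from a fixed finite list of directions, keeping every constant controlled by $d,k,\delta$ and $\norm{\phi}_{C^{k+1}(B)}$ alone — this is precisely where polarization must be combined with a compactness/pigeonhole argument. The scalar induction is classical but requires care with the splitting parameter $\kappa\sim(\,\cdot\,)^{1/m}$ and with the remark that the monotonicity hypothesis at the base case is automatically inherited along the recursion; the slicing geometry in the reassembly (slices meeting the cover in several components) is routine once the amplitudes are cut off.
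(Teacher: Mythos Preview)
Your approach is correct and takes a genuinely different route from the paper's, particularly for part (ii). The paper does not prove (i) at all, simply citing \cite[VIII.2]{Stein93}; for (ii) it uses a short Fourier-analytic trick (attributed to \cite{CCW99}): writing $u(\phi(x)) = \int_{\R} e^{i\lambda \phi(x)} \hat{u}(\lambda)\,d\lambda$ with $u = \chi_{[-1,1]}(\cdot/t)$ and testing against a fixed bump $f\in C_0^\infty(B)$ equal to $1$ on $(1/2)B$, the decay in (i) immediately converts into the sublevel bound (ii). Your route instead treats (i) and (ii) in parallel, reducing both to one dimension via polarization and slicing and then running the classical scalar van der Corput induction directly. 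This is more elementary and self-contained---no reliance on (i) to obtain (ii), and no black-box citation for (i)---at the cost of having to manage the cover, the partition of unity, and the reassembly by hand; in particular, when you sum over $\ell$ the bound $\norm{\nabla(\psi\eta_\ell)}_{L^1}$ picks up a $\norm{\psi}_{L^1}$ term via the product rule, which must be absorbed using Poincar\'e for compactly supported $\psi$, and the claim that each slice meets $U_\ell$ in boundedly many intervals really wants the $U_\ell$ to be balls rather than arbitrary open sets. The paper's trick, by contrast, is essentially a one-line deduction once (i) is taken for granted.
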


The proof of part (i) can be found in, e.g., \cite[VIII.2]{Stein93}. The proof of part (ii) follows from part (i) via a simple trick (see \cite[pp. 983]{CCW99}): by writing $u(\phi(x)) = \int_{\R} e^{i\lambda \phi(x)} \hat{u}(\lambda) d\lambda$, decay estimate (\ref{est_van}) translate directly to the estimate on $\int u(\phi(x)) f(x) dx$. In particular, choose non-negative function $f\in C_0^\infty(B)$ so that $f = 1$ on $(1/2)B$ and $u(t) = \chi_{[-1,1]}(t/\alpha)$, where $\chi_{[-1,1]}$ is the characteristic function of $[-1,1]$. Then one sees that estimate (\ref{est_van}) implies (\ref{est_sublevel}).

The following surprising results indicates the equivalence between $(\varkappa \circ On(\cdot))^{-1} \in L^{p,\infty}(S,d\sigma)$ and the finite type condition.

\begin{proposition}\label{prop_typek}
	A closed smooth hypersurface $S\subset \R^d$ is of finite type if and only if the statements of Proposition \ref{prop_weakp} hold with some $p>0$. More precisely, we have:
	
	(i) If $S$ is of type $k$, then the statements of Proposition \ref{prop_weakp} hold with $p = 1/(k-1)$;
	
	(ii) Conversely, if Proposition \ref{prop_weakp} holds with some $p>(d-1)/k$, then $S$ is of type $k$;
	
	(iii) In particular, if $d=2$, then $S$ is of type $k$ if and only if Proposition \ref{prop_weakp} holds with $p = 1/(k-1)$.
\end{proposition}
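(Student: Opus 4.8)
The plan is, in each implication, to verify statement (iii) of Proposition~\ref{prop_weakp}---the sublevel-set bound $\sigma\{x'\in B(0,r_0):|\nabla\phi_{x_0}(x')|\le t\}\le C_0 t^p$---since this is the form of the assumption most directly linked to the notion of finite type via Theorem~\ref{thm_VanSublevel}.

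\emph{Part (i).} Fix $x_0\in S$ and let $\phi=\phi_{x_0}$ be the local graph; since $S$ is of type $k$ there are uniform $\delta>0$, $r>0$ and a multi-index $\alpha$ with $2\le|\alpha|\le k$ and $|\partial^\alpha\phi|\ge\delta$ on $B(0,r)\subset\R^{d-1}$. Write $\alpha=\alpha'+e_j$. The scalar function $g:=\partial_j\phi$ then satisfies $|\partial^{\alpha'}g|\ge\delta$ with $1\le|\alpha'|\le k-1$, and $|\nabla\phi|\ge|g|$ pointwise, so $\{|\nabla\phi|\le t\}\subseteq\{|g|\le t\}$. If $|\alpha'|\ge2$, then $g$ is of type $|\alpha'|$ on $B(0,r)$ and Theorem~\ref{thm_VanSublevel}(ii), applied in $d-1$ variables, gives $\sigma\{x'\in B(0,r/2):|g|\le t\}\le Ct^{1/|\alpha'|}\le Ct^{1/(k-1)}$ for $0<t<1$; if $|\alpha'|=1$, then $\partial_\ell g$ has constant sign on the connected set $B(0,r)$ with $|\partial_\ell g|\ge\delta$, so $g$ is strictly monotone along each line parallel to $e_\ell$ and Fubini gives $\sigma\{|g|\le t\}\le Ct\le Ct^{1/(k-1)}$. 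Either way we obtain (iii) with $p=1/(k-1)$ and $r_0=r/2$; the constant $C_0$ is uniform in $x_0$ because $\delta$ and $\|\phi_{x_0}\|_{C^{k+1}(B(0,r))}$ are uniformly controlled on the compact hypersurface $S$.

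\emph{Part (ii).} Suppose (iii) holds with some $p>(d-1)/k$ and, for contradiction, that $S$ is not of type $k$. I claim there is then a point $x_0\in S$ at which $\partial^\alpha\phi_{x_0}(0)=0$ for every $|\alpha|\le k$: otherwise at each $x_0$ some derivative of $\phi_{x_0}$ of order in $[2,k]$ is nonzero at the origin, and since these jets depend continuously on $x_0$ (up to the ambiguity of a rotation of the first $d-1$ coordinates in the normalizing frame, which one absorbs by working locally with a smooth choice of frame) and $S$ is compact, the quantity $\max_{2\le|\alpha|\le k}|\partial^\alpha\phi_{x_0}(0)|$ is bounded below uniformly; propagating this lower bound to a fixed neighborhood of the origin would make $S$ of type $k$, a contradiction. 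At such an $x_0$, Taylor's theorem yields $|\nabla\phi_{x_0}(x')|\le C|x'|^k$ near the origin, so $\{|x'|<(t/C)^{1/k}\}\subseteq\{|\nabla\phi_{x_0}(x')|\le t\}$ and hence $\sigma\{x'\in B(0,r_0):|\nabla\phi_{x_0}(x')|\le t\}\ge c\,t^{(d-1)/k}$ for small $t$; since $p>(d-1)/k$ this contradicts (iii) as $t\to0^+$.

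\emph{Part (iii).} For $d=2$ one has $(d-1)/k=1/k<1/(k-1)$, so Proposition~\ref{prop_weakp} holding with $p=1/(k-1)$ holds in particular with some $p>(d-1)/k$, and (ii) gives that $S$ is of type $k$; the reverse implication is exactly (i). The step I expect to demand the most care is the compactness/continuity argument in (ii)---equivalently, locating a point where $S$ has order of contact greater than $k$ with its tangent hyperplane whenever $S$ fails to be of type $k$---together with the bookkeeping of the non-unique normalizing rotation. The one-dimensional reduction and the Fubini estimate in (i), and the Taylor estimate in (ii), are routine.
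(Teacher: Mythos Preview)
Your proof is correct and follows essentially the same approach as the paper: verify statement (iii) of Proposition~\ref{prop_weakp} via the sublevel set estimate (Theorem~\ref{thm_VanSublevel}(ii)) applied to a partial derivative of $\phi_{x_0}$ in one direction, and via a Taylor expansion at a degenerate point in the other. The only notable differences are that you handle the edge case $|\alpha'|=1$ in part (i) explicitly by Fubini (the paper glosses over this by calling $\partial_j\phi_{x_0}$ ``type $k-1$''), and that your compactness/contrapositive argument locating the degenerate point in part (ii) is more elaborate than necessary---the direct negation of Definition~\ref{def_Omg} already yields a point $x_0$ at which $\partial^\alpha\phi_{x_0}(0)=0$ for every $|\alpha|\le k$, with no need to track continuity of jets or the normalizing rotation.
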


\begin{proof}
	(i) Let $S$ be of type $k$. By the definition, for any $x_0\in S$, the local graph $x_d = \phi_{x_0}(x')$ is a function of type $k$ in $B(0,r)$ for some $r>0$ with lower bound $\delta>0$. Since $S$ is compact and smooth, the parameters $r$ and $\delta$ involved above can be chosen uniformly in $x_0$. Let $r_0$ and $\delta_0$ be the universal parameters for $S$. It follows that there is some $1\le j\le d-1$ such that $\frac{\partial}{\partial x_j} \phi_{x_0}$ is type $k-1$. By the sublevel set estimate (\ref{est_sublevel}), there exists some constant $C$ independent of $t$ and $x_0$ such that
	\begin{equation*}
		\sigma \Big\{x'\in B(0,r_0): \Big|\frac{\partial}{\partial x_j}\phi_{x_0} (x')\Big | \le t \Big\} \le Ct^{1/(k-1)}.
	\end{equation*}
	Since $\{x'\in B(0,r_0): |\nabla \phi_{x_0}(x')| \le t  \} \subset \{x'\in B(0,r_0): |\frac{\partial}{\partial x_j} \phi_{x_0}(x')| \le t  \}$, one concludes that
	\begin{equation*}
		\sigma \{x'\in B(0,r_0): |\nabla \phi_{x_0}(x')| \le t  \} \le C t^{1/(k-1)}.
	\end{equation*}
	This proves (iii) of Proposition \ref{prop_weakp} and therefore implies all the statements in Proposition \ref{prop_weakp} are true due to the equivalence.
	
	(ii) If $S$ is not type $k$, then by Definition \ref{def_Omg} and the compactness of $S$, there must be some point $x_0\in S$ such that $\partial^\alpha \phi_{x_0}(0) = 0$ for all $|\alpha| \le k$. It follows that $|\nabla \phi_{x_0}(x')| = O(|x'|^k)$ and thereby
	\begin{equation*}
		\sigma\{ x'\in B(0,r_0): |\nabla \phi_{x_0}(x')| \le t \} \ge \sigma\{ x'\in B(0,r_0): C|x'|^k \le t \} = Ct^{(d-1)/k}.
	\end{equation*}
	Obviously, this contradicts to the assumption that Proposition \ref{prop_weakp} holds with some $p > (d-1)/k$.
	
	(iii) Finally, if $d=2$, combining (i) and (ii), we obtain (iii).
\end{proof}

We remark that for $d\ge 3$, there is a gap between the exponents of (i) and (ii) in Proposition \ref{prop_typek}, which arises naturally since, in the worst case, the finite type condition is only satisfied in a certain direction along the tangent plane. So to fill this gap by using the sublevel set estimate, a condition of (strong) finite type applied to all directions and an extra assumption of convexity of certain type on the hypersurfaces may be required; see, e.g., \cite{BNW88,Ruz09}

Next, we will apply the van de Corput's estimate (\ref{est_van}) to prove a homogenization theorem for operators with \emph{constant coefficients} in a domain of finite type. We establish the convergence rate which is optimal in the sense of (\ref{est_van}) and the purpose of doing so is to provide a comparison with later result dealing with oscillating coefficients. Precisely, we consider the following Dirichlet problem with constant coefficients
\begin{equation}\label{eq_Dconst}
	\left\{
	\begin{aligned}
		-\nabla\cdot (A_0\nabla u_\e)(x) &= 0 \quad & \txt{ in }& \Omega, \\
		u_\e(x) &= f(x,x/\e) \quad & \txt{ on } &\partial\Omega.
	\end{aligned}
	\right.
\end{equation}
where constant matrix $A_0$ satisfies (\ref{cdn_ellipticity}) and $f(x,y)$ satisfies (\ref{cdn_periodicity}) and (\ref{cdn_smooth}). Then we have
\begin{theorem}\label{thm_const}
	Let $\Omega$ be a bounded, smooth domain of type $k$. Then the solutions of system (\ref{eq_Dconst}) converges strongly in $L^2(\Omega)$, as $\e\to 0$, to some function $u_0$, which is the solution of
	\begin{equation}\label{eq_Dconst_Homo}
		\left\{
		\begin{aligned}
			-\nabla\cdot (A_0\nabla u_0)(x) &= 0 \quad & \txt{ in }& \Omega, \\
			u_0(x) &= \bar{f}(x) \quad & \txt{ on } &\partial\Omega.
		\end{aligned}
		\right.
	\end{equation}
	where $\bar{f}(x) = \int_{\T^d} f(x,y)dy$.
	Moreover, there exists $C$ independent of $\e$ such that
	\begin{equation*}
		\norm{u_\e - u_0}_{L^2(\Omega)} \le C \e^{1/(2k)}.
	\end{equation*}
\end{theorem}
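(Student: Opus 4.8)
The plan is to base everything on the Poisson integral representation and van de Corput's estimate (Theorem~\ref{thm_VanSublevel}(i)); for constant coefficients this avoids both the corrector machinery and the inscribing-tangent-plane device needed in the oscillating case. Since $-\nabla\cdot(A_0\nabla)$ has a Poisson kernel $P_\Omega$ obeying (\ref{est_Poi1})--(\ref{est_Poi2}) together with the standard gradient bound $|\nabla_y P_\Omega(x,y)|\le C|x-y|^{-d}$, I would first write
\begin{equation*}
	w(x):=u_\e(x)-u_0(x)=\int_{\partial\Omega}P_\Omega(x,y)\big[f(y,y/\e)-\bar f(y)\big]\,d\sigma(y),
\end{equation*}
and expand $f$ in a Fourier series in the fast variable, $f(x,y)=\sum_{\zeta\in\Z^d}c_\zeta(x)e^{2\pi i\zeta\cdot y}$, so that $c_0=\bar f$ and, by (\ref{cdn_smooth}), $\|\partial_x^\beta c_\zeta\|_{C(\partial\Omega)}\le C_{N,\beta}|\zeta|^{-N}$ for every $N$. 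This reduces the problem to the pointwise estimation of $\sum_{\zeta\ne0}\int_{\partial\Omega}P_\Omega(x,y)c_\zeta(y)e^{2\pi i\zeta\cdot y/\e}\,d\sigma(y)$.

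Next I would fix a small parameter $\rho_0$ and split $\Omega$ into a boundary layer $\{\txt{dist}(\cdot,\partial\Omega)<\rho_0\}$ and its complement. On the layer, the Agmon-type maximum principle coming from (\ref{est_Poi2}) gives $\|w\|_{L^\infty(\Omega)}\le C\|f\|_{L^\infty}$, hence $\|w\|_{L^2(\{\txt{dist}<\rho_0\})}\le C\rho_0^{1/2}$. For $x$ with $\rho:=\txt{dist}(x,\partial\Omega)\ge\rho_0$ I would localize $\partial\Omega$ by finitely many patches using the \emph{tangent-aligned} local graphs $y_d=\phi_{x_i}(y')$ of Definition~\ref{def_Omg} (with uniform $r_0,\delta_0$, shrunk so that also $|\nabla\phi_{x_i}|\le 1/2$ on $B(0,r_0)$) together with a subordinate partition of unity. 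On the $i$-th patch, after rotating the graph coordinates, each oscillatory piece becomes $\int_{\R^{d-1}}a_i(x,y')e^{\frac{2\pi i}{\e}(\eta'\cdot y'+\eta_d\phi_{x_i}(y'))}\,dy'$, where $\eta=O_i^t\zeta$ satisfies $|\eta|=|\zeta|\ge1$ and the amplitude $a_i(x,\cdot)\in C_0^\infty(B(0,r_0))$ obeys $\|\nabla_{y'}a_i(x,\cdot)\|_{L^1}\le C_N|\zeta|^{-N}\rho^{-1}$ — the dominant contribution being $\nabla_{y'}P_\Omega=O(|x-y|^{-d})$, whose integral over the patch is $O(\rho^{-1})$ because $|x-y(y')|\gtrsim\rho+|y'-y_i'|$.

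Then I would run a dichotomy on the size of $\eta_d$. If $|\eta_d|\ge|\eta|/(2\sqrt d)$, the phase $(\eta'/\eta_d)\cdot y'+\phi_{x_i}(y')$ is of type $k$ on $B(0,r_0)$ with lower bound $\delta_0$ and with $C^{k+1}$-norm bounded uniformly in $\zeta$ (since $|\eta'/\eta_d|\le 2\sqrt d$), so Theorem~\ref{thm_VanSublevel}(i) with $\lambda=2\pi|\eta_d|/\e$ yields a bound $C_N\e^{1/k}|\zeta|^{-1/k-N}\rho^{-1}$. If $|\eta_d|<|\eta|/(2\sqrt d)$, then $|\nabla_{y'}(\eta'\cdot y'+\eta_d\phi_{x_i})|=|\eta'+\eta_d\nabla\phi_{x_i}|\ge|\eta|/4$ on $B(0,r_0)$, so repeated non-stationary-phase integration by parts gives $C_M\big(\e/(|\zeta|\rho)\big)^M|\zeta|^{-N}$ for every $M$, which is negligible once $\rho\ge\rho_0\gg\e$. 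Summing over $\zeta\ne0$ (by the rapid decay, taking $N$ large) and over the finitely many patches produces $|w(x)|\le C\e^{1/k}\txt{dist}(x,\partial\Omega)^{-1}$ whenever $\txt{dist}(x,\partial\Omega)\ge\rho_0$; integrating, $\|w\|_{L^2(\{\txt{dist}\ge\rho_0\})}^2\le C\e^{2/k}\int_{\rho_0}^{\txt{diam}\,\Omega}t^{-2}\,dt\le C\e^{2/k}\rho_0^{-1}$, and combining with the boundary-layer estimate gives $\|w\|_{L^2(\Omega)}^2\le C(\e^{2/k}\rho_0^{-1}+\rho_0)$; the choice $\rho_0=\e^{1/k}$ yields $\|u_\e-u_0\|_{L^2(\Omega)}\le C\e^{1/(2k)}$, and the convergence statement and the formula for $\bar f$ are then immediate.

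I expect the main obstacle to be bookkeeping rather than a conceptual difficulty: one must track the Poisson-kernel singularity carefully enough that exactly one negative power of $\txt{dist}(x,\partial\Omega)$ survives — this is precisely what makes the final optimization in $\rho_0$ produce the exponent $1/(2k)$ — and one must ensure that van de Corput is applied to a phase whose type-$k$ lower bound and $C^{k+1}$-norm are uniform in both $\e$ and $\zeta$, which is exactly the role of the dichotomy on $\eta_d$ together with the fact that working in tangent-aligned coordinates keeps $|\eta|=|\zeta|\ge1$.
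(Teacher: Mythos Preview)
Your proposal is correct and follows essentially the same route as the paper's proof: Poisson integral representation, Fourier expansion in the fast variable, localization to tangent-aligned graph patches, a dichotomy on the normal component of the (rotated) frequency vector leading to either van de Corput (Theorem~\ref{thm_VanSublevel}(i)) or non-stationary phase, the resulting pointwise bound $|w(x)|\le C\e^{1/k}\txt{dist}(x,\partial\Omega)^{-1}$, and a boundary-layer/interior split optimized at $\rho_0=\e^{1/k}$. The only cosmetic differences are that the paper introduces the layer split at the end rather than the beginning and uses a single integration by parts in the non-stationary case; your tracking of the $\rho^{-1}$ coming from $\int|\nabla_y P_\Omega|\,d\sigma$ and of the uniformity of the type-$k$ constants under the $\eta_d$-dichotomy matches the paper's exactly.
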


\begin{proof}
	Write $f_\e(y) = f(y,y/\e)$. Let $P(x,y)$ be the Poisson kernel of operator $ - \nabla\cdot A_0\nabla$ in $\Omega$. By (\ref{eq_Dconst}), (\ref{eq_Dconst_Homo}) and the Poisson integral formula, one has
	\begin{equation*}
		u_\e(x) - u_0(x) = \int_{\partial\Omega} P(x,y) ( f_\e(y) - \bar{f}(y) ) d\sigma(y).
	\end{equation*}
	Now we can localize the integral by applying a partition of unity on $\partial\Omega$. Precisely, we can construct finite smooth functions $\{\eta_i : 1\le i\le N\}$ such that $1 = \sum \eta_i$ on $\partial\Omega$ and $\txt{supp}(\eta_i) \subset B(y_i,r_0), y_i\in \partial\Omega$, where $r_0$ is chosen suitably small. Moreover, $|\nabla \eta_i| \le C$. Therefore,
	\begin{equation}\label{eq_ueu0_eta}
		u_\e(x) - u_0(x) = \sum_{i=1}^{N}\int_{\partial\Omega} P(x,y) ( f_\e(y) - \bar{f}(y) ) \eta_{i}(y) d\sigma(y).
	\end{equation}
	
	Now we fix some $\eta = \eta_i$ with $y_0 = y_i$ and $\txt{supp}(\eta) \subset B(y_0,r_0)$, and consider the integral in (\ref{eq_ueu0_eta}) with $\eta$ involved. By translation and rotation we can transform the surface integral to the usual one in $\R^{d-1}$. Precisely, we assume that $z = O^t(y-y_0)$ moves $y_0 \in \partial\Omega$ to origin and transforms the tangent plane at $y_0$ to $z_d = 0$, where $O$ is an orthogonal matrix. As a result, $\partial\Omega\cap B(y_0,r_0)$ is transformed to the local graph $z_d = \phi(z') = \phi_{y_0}(z')$ which satisfies $\phi(0) = 0$ and $\nabla \phi(0) = 0$. Thus it is sufficient to estimate
	\begin{align}\label{eq_local_Poisson}
		\begin{aligned}
			&\int_{\partial\Omega \cap B(x_0,r_0)} P(x,y)( f_\e(y) - \bar{f}(y) ) \eta(y) dy \\
			& \quad = \int_{\{|z'|<r_0, z_d = \phi(z')\}} P(x,Oz+y_0)( f_\e - \bar{f} )(Oz+y_0) \eta(Oz+y_0) d\sigma(z) \\
			& \quad =  \int_{B(0,r_0)} P(x,Oz+y_0) ( f_\e - \bar{f})(Oz+y_0) \eta(Oz+y_0) \sqrt{1+|\nabla \phi(z')|^2} dz',
	\end{aligned}\end{align}
	where $z = (z',\phi(z'))$.
	

	Next we expand $f_\e(y) - \bar{f}(y)$ in Fourier series, i.e., $f_\e(y) - \bar{f}(y) = \sum f_m(y) e^{i\e^{-1} m\cdot y}$, where the sum is taken over all $m\in \Z^d,m\neq 0$. The last integral now is reduced to the estimate of
	\begin{equation}\label{eq_localm}
		e^{i\e^{-1}m\cdot y_0} \int_{B(0,r_0)} P(x,Oz+y_0)f_m(Oz+y_0) e^{i\e^{-1}O^tm \cdot z} \eta(Oz+y_0) \sqrt{1+|\nabla \phi(z')|^2} dz'.
	\end{equation}
	
	To simplify the expression, let $n = |Q^t m|^{-1} O^tm = (n',n_d)$ and $\varphi(z') = n\cdot z = n'\cdot z' + n_d \phi(z')$. Also, let $\lambda = \e^{-1} |m|^{-1}$ and
	\begin{equation*}
		g_{m}(z) = P(x,Oz+y_0)f_m(Oz+y_0) \eta(Oz+y_0) \sqrt{1+|\nabla \phi(z')|^2}.
	\end{equation*}
	Then, (\ref{eq_localm}) becomes
	\begin{equation}\label{eq_gm}
		e^{i\e^{-1}m\cdot y_0} \int_{B(0,r_0)} g_m(z') e^{i\lambda \varphi(z')} dz'.
	\end{equation}
	In view of the form of $\varphi(z')$, the estimate of oscillatory integral (\ref{eq_gm}) can be divided into two cases.
	
	{\bf Case 1:} $|n_d| < \delta_0$ for some sufficiently small $\delta_0$, say, $\delta_0 < (1/10) \min\{1, \norm{\nabla\phi}_{L^\infty(B(0,r_0))}^{-1} \}$. In this case, it is easy to see that $|\nabla \varphi(z')| \ge 1/2$ on $B(0,1/2)$. By a standard estimate of oscillatory integral, we have
	\begin{equation*}
		\bigg| \int_{B(0,r_0)} g_m(z') e^{i\lambda \varphi(z')} dz' \bigg| \le C \lambda^{-1} \int_{B(0,r_0)} |\nabla g_m|.
	\end{equation*}
	Note that $|\nabla g_m| \le C \norm{f_m}_{C^1} \sum_{k=0,1} |\nabla^k P(x,Oz+y_0)| \le C \norm{f_m}_{C^1} |x-(Oz+y_0)|^{d}$. It follow that
	\begin{equation*}
		\int_{B(0,r_0)} |\nabla g_m| \le C \norm{f_m}_{C^1(\partial\Omega)} \txt{dist}(x,\partial\Omega)^{-1}.
	\end{equation*}
	Hence
	\begin{equation*}
		\bigg| \int_{B(0,r_0)} g_m(z') e^{i\lambda \varphi(z')} dz' \bigg| \le C \e |m| \norm{f_m}_{C^1(\partial\Omega)} \txt{dist}(x,\partial\Omega)^{-1}.
	\end{equation*}
	
	{\bf Case 2:} $n_d \ge \delta_0$, where $\delta_0$ is the same as case 1. Now one takes advantage of the assumption that $\partial\Omega$ is of type $k$ and use Theorem \ref{thm_VanSublevel} (i) to obtain
	\begin{align*}
		\bigg| \int_{B(0,r_0)} g_m(z') e^{i\lambda \varphi(z')} dz' \bigg| & \le C\lambda^{-1/k} \int_{B(0,r_0)} |\nabla g_m| \\
		& \le C \e^{1/k} |m|^{1/k} \norm{f_m}_{C^1(\partial\Omega)} \txt{dist}(x,\partial\Omega)^{-1}.
	\end{align*}
	
	Combining Case 1 and Case 2, and summing $m$ over all $m\in \Z^d\setminus\{0\}$, one obtains the estimate for (\ref{eq_local_Poisson}),
	\begin{equation*}
		\bigg| \int_{\partial\Omega \cap B(x_0,r_0)} P(x,y)( f_\e(y) - \bar{f}(y) ) \eta(y) dy \bigg| \le C \frac{\e^{1/k}}{\txt{dist}(x,\partial\Omega)} \sum_{m\in \Z^d\setminus\{ 0\}} |m|\norm{f_m}_{C^1(\partial\Omega)}.
	\end{equation*}
	Using the smoothness of $f$, one can easily verifies that
	\begin{align*}
		\sum_{m\in \Z^d\setminus\{ 0\}} |m|\norm{f_m}_{C^1(\partial\Omega)} & \le \beta \sum_{m\in \Z^d\setminus\{ 0\}}  |m|^{-d-1} + \beta^{-1} \sum_{m\in \Z^d\setminus\{ 0\}} |m|^{d+3} \norm{f_m}^2_{C^1(\partial\Omega)} \\
		& \le C\sup_{x\in \partial\Omega} \big(\norm{f(x,\cdot)}_{H^{(d+3)/2}(\T^d)} + \norm{\nabla_x f(x,\cdot)}_{H^{(d+3)/2}(\T^d)} \big),
	\end{align*}
	where $\beta$ is selected to minimize the second inequality.
	As a consequence, we obtain
	\begin{equation*}
		|u_\e(x) - u_0(x)| \le   \frac{C \e^{1/k}}{\txt{dist}(x,\partial\Omega)}, \qquad \forall x\in \Omega
	\end{equation*}
	
	On the other hand, the Agmon-type maximal principle implies $|u_\e(x) - u_0(x)| \le C$ for all $x\in \Omega$. Hence,
	\begin{align*}
		\int_{\Omega} |u_\e - u_0|^2 & = \int_{\{\txt{dist}(x,\partial\Omega) > \e^{1/k} \}} |u_\e - u_0|^2 + \int_{\{\txt{dist}(x,\partial\Omega) \le \e^{1/k} \}} |u_\e - u_0|^2 \\
		& \le C\e^{2/k} \int_{\{\txt{dist}(x,\partial\Omega) > \e^{1/k} \}} \txt{dist}(x,\partial\Omega)^{-2} dx + C|\{\txt{dist}(x,\partial\Omega) \le \e^{1/k} \}| \\
		& \le C\e^{1/k}.
	\end{align*}
	This ends the proof.
\end{proof}

We end this section by a diagram illustrating the relationships between finite type condition, oscillatory integrals, the property of Diophantine function and homogenization; see Figure \ref{fig_1}. Note that the arrows represent implications. Besides Theorem \ref{thm_main}, all the implications listed in the diagram have been proved or interpreted in this section. It is of independent interest to observe the close relationship between the Diophantine function and oscillatory integrals while further developments regarding this would be interesting as well.

\begin{figure}[H]
	\centering
	\begin{tikzpicture}
	\matrix (m) [matrix of math nodes,row sep=4em,column sep=5em,minimum width=2em]
	{
		& \textrm{van de Corput's estimate} & \\
		\textrm{Finite type condition} & \textrm{Sublevel set estimate} & \textrm{Homogenization}\\ 
		& \begin{tabular}{@{}c@{}}Diophantine function \\ $(\varkappa\circ On)^{-1} \in L^{p,\infty}$\end{tabular} & \\};
	\path[-stealth] [>=stealth]
	
	(m-1-2) edge node [midway, right] {\tiny 
		\begin{tabular}{@{}c@{}} \emph{Thm.} \\ \emph{\ref{thm_VanSublevel} (ii)}\end{tabular}} (m-2-2)
	
	edge node [midway, above, sloped] {\tiny \emph{Constant coeff.}} node [midway, below, sloped] {\tiny \emph{Thm. \ref{thm_const}}}(m-2-3)
	
	(m-2-1) edge node [midway, above, sloped] {\tiny \emph{Thm. \ref{thm_VanSublevel} (i)}} (m-1-2)
	
	(m-2-2.west|-m-2-1) edge node [midway, below] {\tiny \emph{Prop. \ref{prop_typek} }} (m-2-1)
	
	(m-3-2) edge [<->] node [midway, right] {\tiny \emph{Prop. \ref{prop_weakp}}} (m-2-2)
	edge node [midway, below, sloped] {\tiny \emph{Ocillating coeff.}} node [midway,above, sloped] {\tiny \emph{Thm. \ref{thm_main}}} (m-2-3) ;
	\end{tikzpicture}
	\vspace{-1 em}
	\caption{Relationships between properties of hypersurfaces}\label{fig_1}
\end{figure}
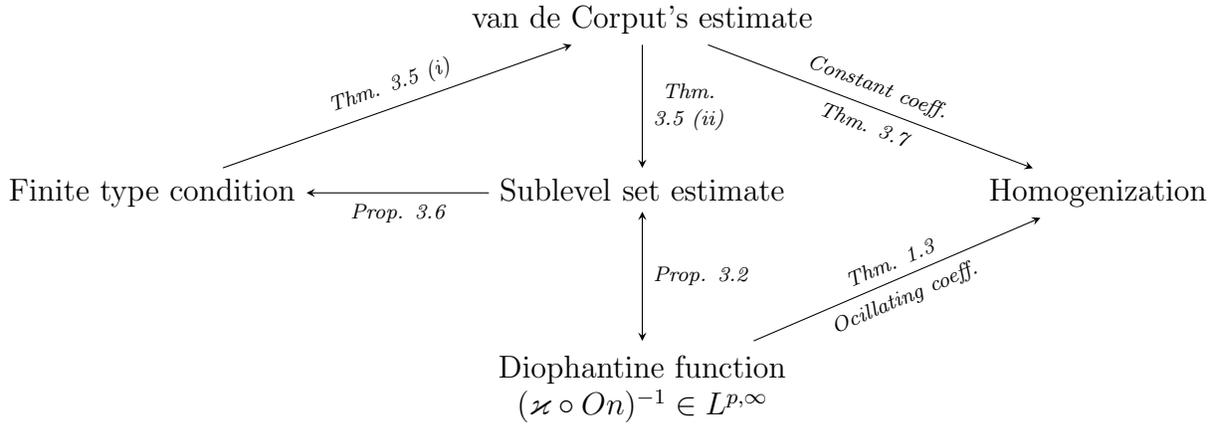

\section{Asymptotic analysis of the Poisson kernel}
\subsection{Auxiliary problems in half-space}
For $n\in \bS^{d-1}$ and $a\in \R$, let $\bH^d_n(a)$ denote the half-space $\{ x\in \R^d: x\cdot n < -a\}$ with $n$ being the unit outer normal to its boundary $\partial \bH^d_n(a) = \{x\in \R^d: x\cdot n = -a \}$.  Consider the Dirichlet problem
\begin{equation}\label{eq_half}
	\left\{
	\begin{aligned}
		- \txt{div} (A\nabla u(x)) &= 0 \qquad & \txt{ in }& \bH^d_n(a), \\
		u(x) &= f(x) \qquad & \txt{ on } &\partial\bH^d_n(a),
	\end{aligned}
	\right.
\end{equation}
where $A$ satisfies (\ref{cdn_ellipticity}), (\ref{cdn_periodicity}) and (\ref{cdn_smooth}), and $f$ is smooth and 1-periodic. Instead of solving (\ref{eq_half}) directly, we try to find a solution of (\ref{eq_half}) with a particular form, i.e., $u(x) = V^a(x-(x\cdot n)n, -x\cdot n)$, where $V^a = V^a(\theta,t)$ is a function of $(\theta,t) \in \T^d\times [a,\infty)$. To identify the system satisfied for $V^a$, let $M$ be a $d\times d$ orthogonal matrix whose last column is $-n$. Let $N$ denote the $d\times (d-1)$ matrix of the first $d-1$ columns of $M$. Since $MM^T = I$, we see that $NN^T + n\otimes n = I$. 
It follows from (\ref{eq_half}) and the previous settings that $V^a$ must be a solution of
\begin{equation}\label{eq_halfV}
	\left\{
	\begin{aligned}
		- \Bigg( \begin{aligned}
			N^T \nabla_\theta \\  \partial_t \ \ \;
		\end{aligned}
		\Bigg) \cdot B
		\Bigg( \begin{aligned}
			N^T \nabla_\theta \\  \partial_t \ \ \;
		\end{aligned}
		\Bigg) V
		&= 0 \qquad & \txt{ in }& \T^d\times (a,\infty), \\
		V &= F \qquad & \txt{ on } & \T^d\times \{a\},
	\end{aligned}
	\right.
\end{equation}
where $B(\theta,t) = M^TA(\theta-tn) M$ and $F(\theta) = f(\theta)$. Observe that if $V^a$ is a solution of (\ref{eq_halfV}) with $a\in \R$, then $ V^a(\theta,t) = V^0(\theta-an,t-a) $, which reduces the problem to the particular case $a = 0$.

Now we collect some important results concerning the lifted system (\ref{eq_halfV}) in the following theorem.

\begin{theorem}\label{thm_halfV}
	Let $n\in \bS^{d-1},a = 0$ and $F\in C^\infty(\T^d)$. Then
	
	(i) The system (\ref{eq_halfV}) has a smooth solution $V$ such that for all $k,s\ge 0$,
	\begin{equation*}
		\int_0^\infty \norm{N^T \nabla_\theta \partial_t^k V}_{H^s(\T^d)}^2 + \norm{\partial_t^{k+1} V}_{H^s(\T^d)}^2 dt \le C,
	\end{equation*}
	where $C$ depends only on $d,m,k,s,A$ and $F$.
	
	(ii) If $n$ satisfies the Diophantine condition with constant $\varkappa>0$ and $V$ is the solution of (\ref{eq_halfV}) given in (i), then there exists a constant $V_\infty$ such that for all $\alpha\in \N^d,k\ge 0$ and $s\ge 0$,
	\begin{equation*}
		|N^T\nabla_\theta \partial_\theta^\alpha \partial_t^k V|+ |\partial_\theta^\alpha \partial_t^{k+1} V| + \varkappa|\partial_\theta^\alpha \partial_t^{k}(V - V_\infty)| \le \frac{C}{(1+\varkappa t)^s},
	\end{equation*}
	where $C$ depends only on $d,m,k,\alpha,s,A$ and $F$.
	
	(iii) Let $n$ satisfy the Diophantine condition with constant $\varkappa>0$ and $\tilde{n}$ be any other unit vector in $\bS^{d-1}$. Let $V$ and $\widetilde{V}$ be the solutions of (\ref{eq_halfV}) corresponding to $n$ and $\tilde{n}$, respectively. Define $W = V - \widetilde{V}$. Then for any $0<\sigma<1$,
	\begin{equation*}
		\int_0^1 \int_{\T^d} |\widetilde{N}^T \nabla_\theta W|^2 + |\partial_t W|^2 d\theta dt \le C \bigg( \frac{|n-\tilde{n}|^4}{\varkappa^{4+\sigma}} + \frac{|n-\tilde{n}|^2}{\varkappa^{2+\sigma}} \bigg).
	\end{equation*}
	where $(\widetilde{N},-\tilde{n})$ is an orthogonal matrix and $C$ depends only on $d,m,\sigma,A$ and $F$.
\end{theorem}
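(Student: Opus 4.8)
### Proof plan for Theorem \ref{thm_halfV}

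The plan is to establish the three parts in order, exploiting the fact that the lifted system \eqref{eq_halfV} is a uniformly elliptic system on the infinite cylinder $\T^d\times(0,\infty)$ with no dependence of the data $F$ on the $\theta$-variable on the cylinder boundary being irrelevant — the key structural point is that \eqref{eq_halfV} is \emph{translation-invariant in $\theta$} and that the conormal ``direction'' picks out the $\partial_t$ component. For part (i), I would run the standard energy method: multiply the equation by $V$ (after subtracting a suitable smooth extension of $F$ into the cylinder to homogenize the boundary data, with the extension decaying in $t$), integrate over $\T^d\times(0,T)$, use ellipticity \eqref{cdn_ellipticity} transported through the orthogonal change of variables $B=M^TA(\theta-tn)M$, and let $T\to\infty$ to get the $L^2$-in-$t$ bound on $(N^T\nabla_\theta V,\partial_t V)$. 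Higher $\theta$-derivatives come for free by differentiating the equation in $\theta$ (coefficients are smooth and the $\theta$-derivative of $B$ is again bounded smooth), and $t$-derivatives are recovered from the equation itself: $\partial_t^2 V$ is expressed via $\partial_t(B\,\cdot)$ terms and tangential derivatives, bootstrapping the $L^2$-in-$t$ estimates. This part is routine and I would cite the analogous construction in \cite{GerMas12,KLS14,ShenZhuge16}.

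For part (ii), the Diophantine condition is what converts the integrated ($L^2$-in-$t$) decay of part (i) into pointwise decay of every order $(1+\varkappa t)^{-s}$. The mechanism is the classical one from \cite{GerMas12}: expand $V(\cdot,t)$ in a Fourier series in $\theta$, and observe that the tangential part of the operator sees the Fourier mode $\xi$ only through $N^T\xi$, whose size is comparable to $|(I-n\otimes n)\xi|\gtrsim\varkappa|\xi|^{-\mu}$. The nonzero Fourier modes thus satisfy an ODE-in-$t$ with a spectral gap of size $\gtrsim\varkappa^2|\xi|^{-2\mu}$ (after dividing into ``elliptic'' modes where $|N^T\xi|$ is not too small, handled by exponential decay, and the remaining modes handled directly), giving exponential-type decay at rate $\varkappa$ in the appropriately rescaled variable $\varkappa t$; the zero mode converges to the constant $V_\infty$, and one estimates $V-V_\infty$ via $\int_t^\infty|\partial_s V_0(s)|\,ds$ using the part (i) bound. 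Summing the Fourier series, using the $H^s$ control from (i) to absorb the mode sum, yields the stated pointwise bound with the factor $\varkappa$ in front of $|V-V_\infty|$. I would track carefully that all constants are independent of $\varkappa$ (only the implicit $\mu$ matters), as emphasized in the preliminaries.

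For part (iii), the hardest and most delicate part, I would set $W=V-\widetilde V$ and derive the equation it satisfies: $W$ solves the $\tilde n$-system with a right-hand side that is a ``commutator'' term measuring the difference between the $n$-operator and the $\tilde n$-operator applied to $V$, plus zero boundary data at $t=0$ (since $F$ is the same for both). The difference of the coefficient structures $B$ versus $\widetilde B$, and of $N^T$ versus $\widetilde N^T$, is $O(|n-\tilde n|)$ in smooth norms (choosing $M,\widetilde M$ to depend smoothly on $n,\tilde n$ away from the antipodal point). Then I would perform the energy estimate for $W$ on $\T^d\times(0,1)$: the $L^2(0,1)$-norm of $(\widetilde N^T\nabla_\theta W,\partial_t W)$ is controlled by the pairing of the right-hand side against $W$. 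The right-hand side involves $\nabla_\theta V$ and $\partial_t V$ with an $|n-\tilde n|$ prefactor, but crucially also derivatives of $V$ in the $\theta$-direction along $n$ that are \emph{not} controlled by the tangential gradient $N^T\nabla_\theta$ — these are the ``bad'' directional derivatives, and they must be estimated using part (ii), which costs powers of $\varkappa^{-1}$. Specifically, $n\cdot\nabla_\theta V$ is not a tangential derivative for the $n$-problem, so one rewrites it and bounds it by $\varkappa^{-1}$ times a good quantity via (ii); pairing against $W$ and using the $t$-weighted decay on $(0,1)$, plus Cauchy--Schwarz with a small parameter $\sigma$ to absorb the top-order $W$ terms, produces the two terms $|n-\tilde n|^4\varkappa^{-4-\sigma}$ and $|n-\tilde n|^2\varkappa^{-2-\sigma}$ — the fourth power arising when the bad direction is hit twice (once in forming the commutator, once in the $\widetilde N$ versus $N$ mismatch) and the square when hit once. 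I expect the bookkeeping of exactly which terms pick up $\varkappa^{-1}$ versus $\varkappa^{-2}$, and justifying the $\sigma$-loss via an interpolation/Young's inequality that trades a small power of $t$-decay for integrability, to be the main obstacle; the rest is a careful but standard energy argument.
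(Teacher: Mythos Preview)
The paper does not actually prove this theorem: it defers (i) and (ii) to \cite{GerMas11,AKMP16,Pra13} and (iii) to \cite{ShenZhuge16}, remarking only that the last ``was established \ldots\ by applying a weighted estimate.'' Your outline is consistent with those references, and for (i) and (iii) there is nothing to correct --- the energy method for (i) and the commutator-plus-energy scheme for (iii), with the $\sigma$-loss coming from a $t$-weighted pairing, are precisely the arguments in the cited papers.

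One caution on (ii): the coefficient matrix $B(\theta,t)=M^TA(\theta-tn)M$ genuinely depends on $\theta$, so the Fourier modes in $\theta$ do \emph{not} decouple into scalar ODEs in $t$, and there is no literal spectral gap of the form you describe. The mechanism in \cite{GerMas12,Pra13} is instead to use the Diophantine--Poincar\'e inequality
\[
\Big\|g-\fint_{\T^d} g\Big\|_{L^2(\T^d)}\le C\varkappa^{-1}\big\|N^T\nabla_\theta g\big\|_{H^s(\T^d)}
\]
(valid for $s$ large depending on $\mu$, and proved exactly via the lower bound $|N^T\xi|\ge\varkappa|\xi|^{-\mu}$ you identified) applied to the slices $V(\cdot,t)$, feed in the integrated $H^s$-in-$\theta$, $L^2$-in-$t$ bounds from (i), and then iterate and rescale in $t$ to upgrade to arbitrary polynomial decay in $\varkappa t$. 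Your Fourier heuristic captures the right inequality but the proof runs through this Poincar\'e step rather than through mode-by-mode ODEs; if you try to execute the ODE picture literally for variable $A$ you will get stuck.
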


The proofs of (i) and (ii) are more or less well-known and can be found in \cite{GerMas11,AKMP16,Pra13}. Statement (iii) was established in \cite{ShenZhuge16} recently for Neumann problems by applying a weighted estimate. The proof for Dirichlet problems is similar without any real difficulty.

For problems with non-convex domains, We will also need the extension of $V$ from $\T^d\times [a,\infty)$ to the whole space $\T^d\times \R$. We state the result as follows.

\begin{proposition}\label{prop_ExtP}
	If $V \in C^k(\T^d\times [a,\infty))$, then it has an extension $\bar{V} \in C^k(\T^d\times \R)$ such that
	\begin{equation*}
		\bar{V}(\theta,t) = V(\theta,t) \qquad \txt{if }\ t\ge a,
	\end{equation*}
	and
	\begin{equation}\label{est_barV}
		\norm{\bar{V}}_{C^k(\T^d\times [a-r,a+r])} \le C \norm{V}_{C^k(\T^d\times [a,a+(k+1)r])},
	\end{equation}
	where $C$ depends only on $k$ and $d$.
\end{proposition}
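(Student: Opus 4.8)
The statement to prove is Proposition~\ref{prop_ExtP}, a purely real-analytic extension result. The plan is to use a reflection-type extension across the hyperplane $t = a$, namely the classical Seeley/Whitney reflection that produces a one-sided $C^k$ extension by taking a finite linear combination of reflected copies. Concretely, for $t < a$ I would set
\[
\bar V(\theta,t) = \sum_{j=0}^{k} c_j\, V\bigl(\theta,\, a + \lambda_j (a - t)\bigr),
\]
where $\lambda_0,\dots,\lambda_k$ are $k+1$ distinct positive scale factors and the coefficients $c_j$ are chosen so that $\sum_j c_j \lambda_j^m = 1$ for $m = 0,1,\dots,k$; this is a Vandermonde system in $\lambda_j$, hence uniquely solvable. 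The condition on the $c_j$ forces all derivatives $\partial_t^m \bar V$ up to order $m = k$ to match from the two sides at $t = a$ (the $\theta$-derivatives commute with the construction and cause no trouble), so that $\bar V \in C^k(\T^d \times \R)$. To keep the evaluation points inside the region where $V$ is defined and to get the stated norm bound, I would fix the $\lambda_j$ in $[1, k+1]$, e.g. $\lambda_j = 1 + j$; then for $t \in [a - r, a]$ the argument $a + \lambda_j(a-t)$ ranges over $[a, a + \lambda_j r] \subset [a, a + (k+1)r]$, which is exactly why the right-hand side of \eqref{est_barV} involves $\norm{V}_{C^k(\T^d\times[a,a+(k+1)r])}$.

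The key steps, in order, are: (1) reduce to $a = 0$ by translation in $t$; (2) write down the ansatz above with undetermined $c_j$ and $\lambda_j = 1+j$; (3) solve the Vandermonde system for the $c_j$ and record that $|c_j| \le C(k)$; (4) verify the matching of all derivatives of order $\le k$ at $t = 0$ by differentiating the ansatz in $t$ and in $\theta$ and using the defining relations $\sum_j c_j \lambda_j^m = \delta_{m,\cdot}$-type identities (more precisely $\sum_j c_j \lambda_j^m = 1$ for $0 \le m \le k$ gives $\partial_t^m$ continuity after accounting for the chain-rule factor $(-\lambda_j)^m$, so one actually wants $\sum_j c_j (-\lambda_j)^m = 1$; this is still a nonsingular linear system); (5) estimate: since each summand is $V$ composed with an affine map of $t$ with slope bounded by $k+1$, the chain rule gives $\norm{\bar V}_{C^k(\T^d\times[a-r,a+r])} \le \bigl(\sum_j |c_j|\bigr)(k+1)^k \norm{V}_{C^k(\T^d\times[a,a+(k+1)r])}$, and $\sum_j|c_j|(k+1)^k =: C(k,d)$ (the dimension only enters through counting mixed $\theta$-derivatives). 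Finally I would note that $\bar V = V$ for $t \ge a$ by construction, and that the piece for $t < a$ glues to it in $C^k$ by step (4).

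The only subtle point — and the one I would be most careful about — is the derivative matching at $t = a$: one must check that the $m$-th $t$-derivative from below, which is $\sum_j c_j (-\lambda_j)^m \partial_t^m V(\theta, a)$ by the chain rule, equals $\partial_t^m V(\theta,a)$ for each $0 \le m \le k$, and this is precisely the requirement $\sum_{j=0}^k c_j(-\lambda_j)^m = 1$ for $m = 0,\dots,k$; solvability is guaranteed because the matrix $\bigl((-\lambda_j)^m\bigr)_{0\le m,j\le k}$ is (up to sign normalization of columns) a Vandermonde matrix in the distinct nodes $-\lambda_j$. Everything else — the translation reduction, the chain-rule norm bound, and the $\theta$-derivatives — is routine, so this is genuinely an elementary lemma; I would present it compactly, emphasizing the Vandermonde solvability and the choice $\lambda_j \le k+1$ that produces the exact constant $(k+1)r$ in the norm estimate.
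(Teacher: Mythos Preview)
Your proposal is correct and is essentially identical to the paper's own proof: the paper also reduces to $a=0$ and defines $\bar V(\theta,t)=\sum_{j=1}^{k+1}\lambda_j V(\theta,-jt)$ for $t<0$, choosing the coefficients $\lambda_j$ to satisfy $\sum_{j=1}^{k+1}(-j)^i\lambda_j=1$ for $i=0,\dots,k$, which is exactly your Vandermonde system with scale factors $1,\dots,k+1$. Your write-up is in fact more explicit than the paper's about the chain-rule norm bound and the reason the interval $[a,a+(k+1)r]$ appears.
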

\begin{proof}
	This may be proved by a standard construction; see \cite{AF03}, for example. Without loss of generality, we may assume $a = 0$. Then define
	\begin{equation}\label{eq_barV}
		\bar{V}(\theta,t) = \left\{
		\begin{aligned}
			&V(\theta,t) \qquad & \txt{ if }& t\ge 0, \\
			& \sum_{j=1}^{k+1} \lambda_j V(\theta,-jt) \qquad & \txt{ if } & t<0,
		\end{aligned}
		\right.
	\end{equation}
	where $\lambda_1,\lambda_2,\cdots,\lambda_{k+1}$ are the unique solution of the system of $k+1$ linear equations
	\begin{equation}\label{eq_lmdj}
		\sum_{j=1}^{k+1} (-j)^i \lambda_j=1,\qquad i = 0,1,\cdots, k.
	\end{equation}
	Then one may verify that $\bar{V} \in C^k(\T^d\times \R)$ and (\ref{est_barV}) is satisfied.
\end{proof}

As mentioned before, one solution of (\ref{eq_half}) can be given by
\begin{equation}\label{eq_uV}
	u(x) = V^a(x-(x\cdot n)n, -x\cdot n) = V(x-(x\cdot n+a)n, -x\cdot n -a),
\end{equation}
where $V$ is the solution of (\ref{eq_halfV}) with $a = 0$, given by Theorem \ref{thm_halfV}. Due to Proposition \ref{prop_ExtP}, we can extend $u$ from $\bH^d_n(a)$ to $\R^d$. Actually, one can define
\begin{equation}\label{eq_baru}
	\begin{aligned}
		\bar{u}(x) & = \bar{V}(x-(x\cdot n+a)n, -x\cdot n -a) \\
		& =\left\{
		\begin{aligned}
			& V(x-(x\cdot n+a)n, -x\cdot n -a) \qquad & \txt{ if }& x\cdot n \le  -a, \\
			& \sum_{j=1}^{k+1} \lambda_j V(x-(x\cdot n+a)n, j(x\cdot n +a)) \qquad & \txt{ if } & x\cdot n > -a,
		\end{aligned}
		\right.
	\end{aligned}
\end{equation}
where $\lambda_j$ is given by (\ref{eq_lmdj}). Then $\bar{u}$ is a $C^k$ extension of $u$ in $\R^d$. Moreover, there exist constants $C,c >1$ depending at most on $k,d$ and $m$ such that for any $x_0 \in \partial\bH^d_n(a)$ and $r>0$,
\begin{equation}\label{est_baru}
	\norm{\bar{u}}_{C^k(B(x_0,r))} \le C \norm{u}_{C^k(B(x_0,cr)\cap \bH_n^d(a))}.
\end{equation}

\subsection{Local two-scale expansion}
Throughout this subsection we assume that $\Omega$ is a bounded smooth domain in $\R^d,d\ge 2$, and that $A$ satisfies (\ref{cdn_ellipticity}), (\ref{cdn_periodicity}) and (\ref{cdn_smooth}). 
Consider the Dirichlet problem
\begin{equation}\label{eq_uefe}
	\left\{
	\begin{aligned}
		\cL_\e u_\e(x) &= 0 \qquad & \txt{ in }& \Omega, \\
		u_\e(x) &= f_\e(x) = \e f(x/\e) \qquad & \txt{ on } &\partial\Omega,
	\end{aligned}
	\right.
\end{equation}
where $f(y)$ is 1-periodic and smooth.

Fix $x_0\in \partial\Omega$. The main goal of this subsection is to find an approximation of $u_\e$ in a neighborhood of $x_0$. To this end, we solve the Dirichlet problem in a half-space
\begin{equation}\label{eq_vefe}
	\left\{
	\begin{aligned}
		\cL_\e v_\e(x) &= 0 \qquad & \txt{ in }& \bH^d_{n_0}(a), \\
		v_\e(x) &= f_\e(x) \qquad & \txt{ on } &\partial\bH^d_{n_0}(a),
	\end{aligned}
	\right.
\end{equation}
where $a = -x_0\cdot n_0$ and $\partial\bH^d_{n_0}(a)$ is the tangent plane of $\partial\Omega$ at $x_0$. Note that $v_\e$ has a form of $v_\e(x) = \e v_1(x/\e)$, and $v_1$ is the solution of
\begin{equation}\label{eq_v1f1}
	\left\{
	\begin{aligned}
		\cL_1 v_1(x) &= 0 \qquad & \txt{ in }& \bH^d_{n_0}(a/\e), \\
		v_1(x) &= f(x) \qquad & \txt{ on } &\partial\bH^d_{n_0}(a/\e),
	\end{aligned}
	\right.
\end{equation}

The existence of the solution of (\ref{eq_v1f1}) or (\ref{eq_vefe}) as well as its estimates have been established via the half-space problem in Theorem \ref{thm_halfV} (i) and formula (\ref{eq_uV}). Note that $\Omega\setminus \bH^d_{n_0}(a)$ is non-empty for non-convex domain $\Omega$. To approximate $u_\e$ in $\Omega$, we could extend $v_1$ and hence $v_\e$, in the form of (\ref{eq_baru}) with $k = 2$, to the whole space $\R^d$.
Let $\bar{v}_\e$ denote the extended function of $v_\e$. It follows from (\ref{eq_uV}), (\ref{eq_baru}) and (\ref{est_baru}) that
\begin{equation}\label{eq_barve}
	\bar{v}_\e = v_\e \txt{ in } \bH^d_{n_0}(a) \quad \txt{and} \quad \sup_{x\in B_r}|\nabla^k  \bar{v}_\e(x)| \le C\sup_{B_{cr}\cap \bH_{n_0}^d(a)} |\nabla^k v_\e(x)|, \quad k = 0,1,2.
\end{equation}
for any $r>0$, where $B_r$ and $B_{cr}$ are centered at $x_0$.

Define $w_\e(x) = u_\e(x) - \bar{v}_\e(x)$. Observe that by the definition of $\bar{v}_\e$, $w_\e$ is a solution of $\cL_\e w_\e(x) = 0$ only in $\Omega\cap \bH^d_{n_0}(a)$. Now we prove the following.

\begin{theorem}\label{thm_dwe}
	Let $w_\e$ be constructed as above. Let $\e \le r\le \sqrt{\e}$. Then for any $\sigma \in (0,1)$,
	\begin{equation}\label{est_dwe_O}
		\norm{\nabla w_\e}_{L^\infty(B(x_0,r) \cap \Omega)} \le C\sqrt{\e} + C\frac{r^{2+\sigma}}{\e^{1+\sigma}},
	\end{equation}
	where $C$ depends on $d,m,\mu,\sigma,\Omega,A$ and $f$.
\end{theorem}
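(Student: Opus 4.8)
The plan is to exploit that $w_\e$ solves $\cL_\e w_\e=0$ on the inscribed one-sided region $D:=\Omega\cap\bH^d_{n_0}(a)$, to prove there a quadratic decay $\norm{w_\e}_{L^\infty(D\cap B_\rho)}\le C\rho^2$ for $\e\le\rho\le r_0$, to turn this into the gradient bound via a rescaled boundary regularity estimate for $\cL_\e$, and finally to carry the bound across the tangent plane to the complementary side using the $C^2$-preserving extension $\bar v_\e$. First I would normalize so that $x_0=0$, the tangent plane is $\{x_d=0\}$, $\bH:=\bH^d_{n_0}(a)=\{x_d>0\}$, and $\partial\Omega\cap B_{r_0}=\{x_d=\phi(x')\}$ with $\phi(0)=0$, $\nabla\phi(0)=0$, $\abs{\phi(x')}\le C\abs{x'}^2$; then $D\cap B_{r_0}=\{x_d>\max(\phi(x'),0)\}$ is a Lipschitz graph domain whose Lipschitz constant on $B_\rho$ is $O(\rho)$, and whose only non-$C^{1,\alpha}$ locus is the codimension-two edge $\mathcal{E}:=\partial\Omega\cap\partial\bH$, along which --- $D$ being locally the intersection of two epigraphs --- the interior dihedral angle is $\pi-O(\rho)<\pi$, the benign convex case. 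I also record the bounds $\norm{\nabla u_\e}_{L^\infty(\Omega)}+\norm{\nabla v_\e}_{L^\infty(\bH)}\le C$, $\norm{\nabla^2 u_\e}_{L^\infty(\Omega)}+\norm{\nabla^2 v_\e}_{L^\infty(\bH)}\le C\e^{-1}$ (for $v_\e=\e v_1(\cdot/\e)$ from flat-boundary Schauder applied to the $v_1$ of Theorem \ref{thm_halfV}(i); for $u_\e=\e U(\cdot/\e)$ from Schauder in the dilated, uniformly $C^{2,\sigma}$ domain $\Omega/\e$), $\norm{u_\e}_{L^\infty}+\norm{\bar v_\e}_{L^\infty(\Omega)}\le C\e$ (Agmon), and $\norm{\nabla^j\bar v_\e}_{L^\infty(B_r)}\le C\e^{1-j}$ for $j=0,1,2$ from Proposition \ref{prop_ExtP} and (\ref{est_baru}). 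The Diophantine structure plays no role here.

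Next I would read off the boundary values of $w_\e$ on $\partial D$. On $\partial\Omega\cap\overline\bH$ one has $\bar v_\e=v_\e$ and $u_\e=f_\e$, so $w_\e=f_\e-v_\e$; projecting $x\in\partial\Omega$ with $x_d=\phi(x')\ge0$ to $x^*=(x',0)\in\partial\bH$, where $v_\e=f_\e$, and using $\abs{x-x^*}=\phi(x')\le C\abs{x'}^2$ with the Lipschitz bounds gives $\abs{w_\e(x)}\le C\abs{x}^2$. On $\partial\bH\cap\overline\Omega$, $w_\e=u_\e-f_\e$ and the nearest point of $\partial\Omega$ lies within $C\abs{x'}^2$, so again $\abs{w_\e}\le C\abs{x}^2$. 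Differentiating these identities tangentially --- and using that on $\partial\bH$ (resp.\ $\partial\Omega$) the tangential gradients of the two pieces coincide, so their difference is purely normal and projects to a vector of size $O(\abs{x})$ --- yields $\abs{\nabla_{\mathrm{tan}}w_\e(x)}\le C\abs{x}+C\e^{-1}\abs{x}^2$ on $\partial D\cap B_{r_0}$, and, interpolating with $\norm{\nabla^2(\cdot)}\le C\e^{-1}$, a companion bound for the $C^{0,\sigma}$ seminorm of $\nabla_{\mathrm{tan}}w_\e$ whose dominant term, after the scale factor $r^\sigma$, is $C\e^{-1-\sigma}r^{2+\sigma}$.

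For the decay I would fix a small $\theta$, split $w_\e=P+z$ on $D\cap B_\rho$ with $P=w_\e$ on $\partial D\cap B_\rho$ and $P=0$ on $\partial B_\rho\cap D$ (so $\norm{P}_{L^\infty}\le\sup_{\partial D\cap B_\rho}\abs{w_\e}\le C\rho^2$), note $z=w_\e-P$ vanishes on $\partial D\cap B_\rho$, and apply the uniform boundary Hölder estimate for $\cL_\e$ in the flat Lipschitz domain $D$ at scales $\ge\e$ to get $\norm{z}_{L^\infty(D\cap B_{\theta\rho})}\le C\theta^\alpha\norm{z}_{L^\infty(D\cap B_\rho)}$; hence $M(\theta\rho)\le C\theta^\alpha M(\rho)+C'\rho^2$ with $M(\rho)=\norm{w_\e}_{L^\infty(D\cap B_\rho)}$, and choosing $C\theta^\alpha\le\tfrac12\theta^2$ and iterating from $\rho=r_0$ (where $M(r_0)\le C\e$) yields $M(\rho)\le C\rho^2$ for $\e\le\rho\le r_0$. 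For the gradient at $p\in D\cap B_r$, set $d=\min(\txt{dist}(p,\mathcal{E}),r_0)$ so that $B(p,d)$ meets $\partial D$ only along its smooth part. If $d\ge\e$, the uniform boundary $C^{1,\sigma}$ estimate of Avellaneda--Lin (whose constant carries a factor $(d/\e)^\sigma$, since $\cL_\e$ is uniformly Lipschitz but not uniformly $C^{1,\sigma}$) bounds $\norm{\nabla w_\e}_{L^\infty(D\cap B(p,d/2))}$ by $\tfrac Cd\norm{w_\e}_{L^\infty(D\cap B(p,d))}+C\abs{\nabla_{\mathrm{tan}}w_\e(p)}+Cd^\sigma[\nabla_{\mathrm{tan}}w_\e]_{C^{0,\sigma}}$, and plugging in $\norm{w_\e}_{L^\infty(D\cap B(p,d))}\le Cr^2$ with the boundary data from the previous paragraph (and $d\le r\le\sqrt\e$) gives $\le C\sqrt\e+C\e^{-1-\sigma}r^{2+\sigma}$. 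If $d<\e$, then $p$ lies within $\e$ of some $q\in\mathcal{E}$ with $\abs{q}\le r$; rescaling $B(q,C\e)$ to unit size produces a $1$-periodic system in a convex wedge with $L^\infty$-norm and $C^1$ boundary data both $\le C(\abs{q}^2+\e^2)$, and convex-wedge regularity for the frozen-coefficient operator gives $\abs{\nabla w_\e(p)}\le C\e^{-1}(\abs{q}^2+\e^2)\le C\sqrt\e+C\e^{-1}r^2$. Together, $\norm{\nabla w_\e}_{L^\infty(D\cap B_r)}\le C\sqrt\e+C\e^{-1-\sigma}r^{2+\sigma}$.

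Finally, for $x\in\Omega\cap B_r$ on the other side ($x_d\le0$, so $\phi(x')<0$ and $\abs{x_d}\le\abs{\phi(x')}\le Cr^2$), put $\tilde x=(x',0)\in\partial\bH\cap\Omega\subset\partial D$; then $\nabla w_\e(x)-\nabla w_\e(\tilde x)=(\nabla u_\e(x)-\nabla u_\e(\tilde x))-(\nabla\bar v_\e(x)-\nabla\bar v_\e(\tilde x))$ has size $\le C\e^{-1}\abs{x-\tilde x}\le C\e^{-1}r^2$ by the $C^2$-bounds near the boundary, while $\nabla w_\e(\tilde x)$ is already controlled above; since $\e^{-1}r^2\le\e^{-1-\sigma}r^{2+\sigma}$ for $\e\le r$, this gives (\ref{est_dwe_O}). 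The hard part is the gradient bound on the good side: because $D=\Omega\cap\bH^d_{n_0}(a)$ is only Lipschitz at $x_0$ (the tangency of $\partial\Omega$ to the tangent plane forces the codimension-two edge $\mathcal{E}$ through $x_0$), the uniform boundary Lipschitz estimate fails exactly at $x_0$; one is forced to localize at the scale $d=\txt{dist}(\cdot,\mathcal{E})$ on the smooth part of $\partial D$, to dispose of the $\e$-neighborhood of $\mathcal{E}$ using that the wedge is convex, and to absorb the resulting loss $(r/\e)^\sigma$ --- which is precisely the origin of the exponent $\sigma$ in (\ref{est_dwe_O}). The milder but essential point that $w_\e$ satisfies the equation only on the $\bH$-side is handled by the $C^2$-preserving extension of Proposition \ref{prop_ExtP}, at the cost of the harmless $O(r^2/\e)$ term.
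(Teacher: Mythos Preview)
Your route is genuinely different from the paper's. You work directly on $D=\Omega\cap\bH^d_{n_0}(a)$, which is only Lipschitz with a singular edge $\mathcal{E}=\partial\Omega\cap\partial\bH$ passing through $x_0$, and you try to handle the edge separately by a rescaled wedge argument. The paper instead introduces an auxiliary $C^{2,\alpha}$ domain $\tO\subset\Omega\cap\bH^d_{n_0}(a)$ that is tangent to both $\partial\Omega$ and $\partial\bH^d_{n_0}(a)$ at $x_0$ (existence follows from the uniform interior ball condition). Since $\tO$ is smooth, the Avellaneda--Lin uniform Lipschitz estimate applies directly on $\tO$ with no edge to worry about; the required boundary data on $\partial\tO$ (and the $L^\infty$ bound on $w_\e$, obtained via the Poisson integral on $\tO$ split at scale $\sqrt\e$) are then read off from the fact that both $\partial\Omega$ and $\partial\bH$ are within $O(|y-x_0|^2)$ of $\partial\tO$. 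The final extension from $\tO$ to $\Omega$ is the same one-line $C^2$ argument you give. The auxiliary-domain trick is the key simplification you are missing.

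There is a real gap in your near-edge step. First, the geometry: since $\nabla\phi(0)=0$, the set $\mathcal{E}=\{\phi(x')=0\}$ is the zero set of a function with a critical point at the origin, hence in the relevant non-convex (saddle) case it is a cone through $0$ rather than a smooth codimension-two manifold; your localization at scale $d=\txt{dist}(p,\mathcal{E})$ and your dihedral-angle description are therefore not valid near $x_0$, which is precisely the point of interest. Second, even away from $0$, the ``convex-wedge regularity for the frozen-coefficient operator'' you invoke is not a standard result for elliptic \emph{systems} ($m\ge 1$ here): Lipschitz estimates for systems in wedge or polyhedral domains are delicate, and you give no argument for them, nor for the uniformity of the constant as the wedge angle tends to $\pi$. (Your parenthetical about a $(d/\e)^\sigma$ factor in the Avellaneda--Lin estimate is also off: the uniform Lipschitz estimate carries no such factor; the $\sigma$ enters only through the $C^{0,\sigma}$ seminorm of the tangential boundary data, exactly as you eventually use it.) The paper's $\tO$ device avoids all of this at essentially no cost.
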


To prove the theorem, we require the following lemmas.

\begin{lemma}\label{lem_ueCk}
	Let $u_\e$ be a solution of (\ref{eq_uefe}), then one has for any $k\ge 0$,
	\begin{equation}\label{est_dkue}
		\norm{\nabla^k u_\e}_{L^\infty(\Omega)} \le C\e^{1-k},
	\end{equation}
	where $C$ is independent of $\e$.
\end{lemma}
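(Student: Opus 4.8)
The plan is to treat $k=0$ with the maximum principle and $k\ge 1$ by rescaling $\cL_\e$ to unit scale, where it becomes a $\Z^d$-translate of the fixed smooth-coefficient operator $\cL_1$ and classical Schauder estimates for elliptic systems apply; in particular no uniform (Avellaneda--Lin) regularity theory is needed. For $k=0$ I would invoke the Agmon-type maximum principle recorded after (\ref{est_Poi2}) with boundary data $f_\e=\e f(\cdot/\e)$, which gives $\norm{u_\e}_{L^\infty(\Omega)}\le C\e\norm{f}_{L^\infty(\T^d)}$; this also furnishes the $L^\infty$ input used below.

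Now fix $k\ge 1$ and a point $\bar x\in\Omega$, and set $\rho=\txt{dist}(\bar x,\partial\Omega)$. If $\rho\ge\e$, I would work on $B(\bar x,\e)\subset\Omega$ and put $w(y)=u_\e(\bar x+\e y)$ for $y\in B_1$; then $-\txt{div}(A(\bar x/\e+y)\nabla w)=0$ in $B_1$, the coefficient being a translate of the fixed matrix $A\in C^\infty(\T^d)$, so interior Schauder gives $\norm{w}_{C^{k,\alpha}(B_{1/2})}\le C\norm{w}_{L^\infty(B_1)}\le C\e$ and hence $|\nabla^k u_\e(\bar x)|=\e^{-k}|\nabla^k w(0)|\le C\e^{1-k}$. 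If $\rho<\e$, let $x_*\in\partial\Omega$ be a nearest point and, for $\e$ small enough that $\partial\Omega\cap B(x_*,4\e)$ is the graph $x_d=\phi_{x_*}(x')$, put $v(y)=\e^{-1}u_\e(x_*+\e y)$ on $B_4\cap\widetilde{\Omega}_\e$ with $\widetilde{\Omega}_\e:=\e^{-1}(\Omega-x_*)$. Then $v$ solves $-\txt{div}(A(x_*/\e+y)\nabla v)=0$ in $B_4\cap\widetilde{\Omega}_\e$ with Dirichlet data $v(y)=f(x_*/\e+y)$ on $B_4\cap\partial\widetilde{\Omega}_\e$, and the key observation is that after the dilation $\partial\widetilde{\Omega}_\e$ is the graph of $\psi_\e(y')=\e^{-1}\phi_{x_*}(\e y')$, which satisfies $\psi_\e(0)=0$ and $\partial^\alpha_{y'}\psi_\e=\e^{|\alpha|-1}(\partial^\alpha\phi_{x_*})(\e y')$, so all its derivatives of order $\ge 1$ are $O(\e)$ on $B_4$; thus $\widetilde{\Omega}_\e$ is, on $B_4$, a smooth domain that is $C^N$-close to the half-space $\{y_d<0\}$ uniformly in $\e$. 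Boundary Schauder estimates for systems then yield $\norm{v}_{C^{k,\alpha}(B_2\cap\widetilde{\Omega}_\e)}\le C(\norm{v}_{L^\infty(B_4\cap\widetilde{\Omega}_\e)}+\norm{f}_{C^{k,\alpha}(\T^d)})\le C(\e^{-1}\norm{u_\e}_{L^\infty(\Omega)}+1)\le C$, and since $(\bar x-x_*)/\e\in B_1$ we obtain $|\nabla^k u_\e(\bar x)|=\e^{1-k}|\nabla^k v((\bar x-x_*)/\e)|\le C\e^{1-k}$. Taking the supremum over $\bar x\in\Omega$ proves (\ref{est_dkue}).

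The one step that needs genuine care is the boundary case: one must verify that $\widetilde{\Omega}_\e$ has $C^{k,\alpha}$ (indeed $C^\infty$) boundary character bounded uniformly in $\e$ on the fixed ball $B_4$, together with the uniform bounds on the rescaled coefficients $A(x_*/\e+\cdot)$ and data $f(x_*/\e+\cdot)$, so that the constant in the boundary Schauder estimate does not degenerate as $\e\to0$; this is exactly the elementary scaling identity for $\psi_\e$ above. Everything else is a routine rescaling of textbook elliptic regularity, so I expect the lemma to go through without further obstacles.
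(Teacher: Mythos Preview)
Your proof is correct and follows essentially the same approach as the paper: rescale by $\e$ so that $\cL_\e$ becomes a translate of the fixed operator $\cL_1$, then apply local Schauder estimates together with the $k=0$ maximum-principle bound. The paper performs a single global rescaling $u_\e(x)=\e u_1(x/\e)$ on $\Omega^\e=\e^{-1}\Omega$ and asserts in one line that the $C^k$ character of $\Omega^\e$ is controlled by that of $\Omega$, whereas you split into interior and boundary cases and verify explicitly via $\partial^\alpha\psi_\e=\e^{|\alpha|-1}(\partial^\alpha\phi_{x_*})(\e\cdot)$ that the rescaled boundary is uniformly (indeed increasingly) flat; this extra care is exactly what justifies the paper's one-line claim, so the two arguments are the same in substance.
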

\begin{proof}
	For $k=0$, we use the Agmon-type maximal principle to obtain
	\begin{equation}\label{est_ue_inf}
		\norm{u_\e}_{L^\infty(\Omega)} \le C\norm{f_\e}_{L^\infty(\partial\Omega)} \le C\e.
	\end{equation}
	For $k > 0$, we apply a blow-up argument. Set $u_\e(x) = \e u_1(x/\e)$. Then $u_1$ is a solution of
	\begin{equation}\label{eq_u1f1}
		\left\{
		\begin{aligned}
			\cL_1 u_1(x) &= 0 \qquad & \txt{ in }& \Omega^{\e}, \\
			u_1(x) &= f(x) \qquad & \txt{ on } &\partial\Omega^{\e},
		\end{aligned}
		\right.
	\end{equation}
	where $\Omega^\e = \{x: \e x \in \Omega  \}$. Note that the $C^k$ character of $\Omega^{\e}$ is controlled by that of $\Omega$. It follows from the local Schauder's estimate that for any $x\in \overline{\Omega^{\e}}$,
	\begin{equation*}
		\norm{\nabla^k u_1}_{L^\infty(B(x,1)\cap \Omega^{\e})} \le C\norm{u_1}_{L^\infty(B(x,2)\cap \Omega^{\e})} + \norm{f}_{C^{k,\alpha}(B(x,2)\cap \Omega^{\e})}.
	\end{equation*}
	Since $f$ is 1-periodic, then $\norm{f}_{C^{k,\alpha}(B(x,2)\cap \Omega^{\e})} \le C\norm{f}_{C^{k,\alpha}(\T^d)}$. And by (\ref{est_ue_inf}), $\norm{u_1}_{L^\infty(\Omega^{\e})} \le C$. It follows that
	\begin{equation*}
		\norm{\nabla^k u_1}_{L^\infty(\Omega^{\e})} \le C,
	\end{equation*}
	for any $k> 0$, where $C$ depends also on $k$. Changing variables back to $u_\e$, we obtain the desired estimates (\ref{est_dkue}).
\end{proof}

\begin{lemma}
	Let $\bar{v}_\e$ be constructed as above, then one has for $k=0,1,2$,
	\begin{equation}\label{est_dkve}
		\norm{\nabla^k \bar{v}_\e}_{L^\infty(\R^d)} \le C\e^{1-k},
	\end{equation}
	where $C$ is independent of $\e$.
\end{lemma}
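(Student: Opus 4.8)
The plan is to follow the scheme of Lemma~\ref{lem_ueCk}: first prove the estimate for $v_\e$ on the half-space $\bH^d_{n_0}(a)$, where it genuinely solves the equation, and then transfer it to the extension $\bar v_\e$. The reduction to the half-space is built into the construction of $\bar v_\e$: by (\ref{eq_barve}), on letting the ball radius tend to infinity,
\[
\norm{\nabla^k \bar v_\e}_{L^\infty(\R^d)} \le C\,\norm{\nabla^k v_\e}_{L^\infty(\bH^d_{n_0}(a))}, \qquad k=0,1,2
\]
(concretely, $\bar v_\e$ equals $v_\e$ on $\bH^d_{n_0}(a)$ and, off it, is a fixed finite combination of values of $v_\e$ at points reflected into $\bH^d_{n_0}(a)$ by an $\e$-independent affine map, so the chain rule controls $\nabla^k\bar v_\e$ by $\nabla^k v_\e$ for $k\le 2$). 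It therefore suffices to show $\norm{\nabla^k v_\e}_{L^\infty(\bH^d_{n_0}(a))} \le C\e^{1-k}$ for $k=0,1,2$.

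For this half-space bound I would rescale exactly as in Lemma~\ref{lem_ueCk}. Writing $v_\e(x)=\e\, v_1(x/\e)$, the function $v_1$ solves $\cL_1 v_1=0$ in $\bH^d_{n_0}(a/\e)$, a half-space with flat boundary, with the $1$-periodic smooth datum $f$. By the Agmon-type maximum principle for the half-space Dirichlet problem (cf.\ \cite{GerMas11,Pra13}; this bound is also implicit in the construction of $v_1$ underlying Theorem~\ref{thm_halfV}), $\norm{v_1}_{L^\infty}\le C\norm{f}_{L^\infty(\T^d)}$. Boundary and interior Schauder estimates on unit balls then apply with constants independent of the center and of $\e$---the boundary is a fixed hyperplane, and $\norm{f}_{C^{k,\alpha}(B)}\le C\norm{f}_{C^{k,\alpha}(\T^d)}$ for any unit ball $B$ by periodicity---which yields $\norm{\nabla^k v_1}_{L^\infty(\bH^d_{n_0}(a/\e))}\le C$ for every $k\ge 0$, with $C$ independent of $\e$. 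Undoing the scaling, $\norm{\nabla^k v_\e}_{L^\infty(\bH^d_{n_0}(a))}=\e^{1-k}\norm{\nabla^k v_1}_{L^\infty}\le C\e^{1-k}$, and together with the reduction above this finishes the proof.

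The only step that is not automatic is the uniform $L^\infty$ bound on $v_1$---equivalently, that the boundary-layer profile $V$ from Theorem~\ref{thm_halfV}(i) is bounded on $\T^d\times[0,\infty)$ uniformly in $n_0$. One cannot obtain this by applying elliptic regularity to the lifted system (\ref{eq_halfV}), which is degenerate in the $\theta$-variable: it involves only the $d-1$ directions $N^T\nabla_\theta$ together with $\partial_t$ inside the $(d+1)$-dimensional cylinder $\T^d\times\R_+$. Hence the bound must be extracted from $v_1$ viewed as an honest uniformly elliptic solution in $\R^d$, which is why the maximum principle is invoked here rather than an interior estimate. Everything else---the rescaling, the Schauder estimates, and the algebraic extension step---is routine.
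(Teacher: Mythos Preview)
Your proposal is correct and follows essentially the same approach as the paper: reduce to $v_\e$ via (\ref{eq_barve}), rescale to $v_1$, invoke the Agmon-type maximum principle in the half-space for $k=0$, and apply local Schauder estimates for $k=1,2$. The only cosmetic difference is that the paper justifies the half-space maximum principle explicitly through the Poisson integral formula and the kernel bound $|P_{\bH}(x,y)|\le C\,\txt{dist}(x,\partial\bH)/|x-y|^d$ from \cite{GerMas12}, whereas you cite the principle directly.
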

\begin{proof}
	In view of (\ref{eq_barve}), it suffices to consider the estimates for $v_\e$. Let $v_\e(x) = \e v_1(x/\e)$. Then $v_1$ is the solution of (\ref{eq_v1f1}), which can also be given by the Poisson integral formula
	\begin{equation}\label{eq_v1Poi}
		v_1(x) = \int_{\partial \bH_{n_0}^d(a/\e)} P_{\bH}(x,y) f(y) d\sigma(y),
	\end{equation}
	where $P_{\bH}$ is the Poisson kernel of $\cL_1$ in the half-space $ \bH_{n_0}^d(a/\e)$. A similar estimate as (\ref{est_Poi2}) in half-spaces was established in \cite{GerMas12}, i.e.,
	\begin{equation*}
		P_{\bH}(x,y) \le \frac{C \txt{dist}(x,\partial \bH_{n_0}^d(a/\e))}{|x-y|^d}, \qquad \txt{for all } x\in \bH_{n_0}^d(a/\e).
	\end{equation*}
	Then it follows from (\ref{eq_v1Poi}) that  $\norm{v_1}_{L^\infty( \bH_{n_0}^d(a/\e))} \le C\norm{f}_{L^\infty(\partial \bH_{n_0}^d(a/\e))}$ (Agmon-type maximal principle). Thus, $\norm{v_\e}_{L^\infty(\bH_{n_0}^d(a))} \le C \e$ as desired for $k=0$. The estimates for $k>0$ follow similarly as Lemma \ref{lem_ueCk} by the local Schauder's estimates.
\end{proof}

\begin{proof}[Proof of Theorem \ref{thm_dwe}]
	The proof follows a line of \cite{AKMP16}, with modifications made to adjust to our setting of non-convex domains. 
	
	{\bf Step 1:} Set up and conventions. First of all, since $\Omega$ is smooth, then for each point $x_0$ on $\partial\Omega$, there is another domain $\tO $ satisfying the following:
	\begin{enumerate}
		\item $\tO \subset \Omega\cap \bH^d_{n_0}(a)$;
		\item $\tO$ shares the same tangent hyperplane $\partial\bH^d_{n_0}(a)$ of $\Omega$ at point $x_0$;
		\item $\tO$ is a $C^{2,\alpha}$ domain whose $C^{2,\alpha}$ character is controlled by that of $\Omega$.
	\end{enumerate}
	The existence of such domains is obvious since smooth domains always satisfy the uniform interior spheres condition.
	
	Let $y\in \partial\Omega$ and $|y-x_0|\le r_0$ for some $r_0$ depending only on $\Omega$. We will use the following conventions: let $\hat{y}$ denote the projection of $y$ on $\partial \bH^d_{n_0}(a)$ such that $y - \hat{y}$ is a multiple of $n_0$; let $\tilde{y}$ denote the first point on $\partial\tO$ such that $y - \tilde{y}$ is a multiple of $n_0$. Since both $\Omega$ and $\tO$ are at least $C^2$ near $x_0$, it is easy to see that for all $y$ satisfying $|y-x_0| \le r_0$,
	\begin{equation}\label{est_yx0}
		|y - \tilde{y}| + |y - \hat{y}| + |\tilde{y} - \hat{y}| \le C |y - x_0|^2.
	\end{equation}
	This also implies $|y-x_0| \approx |\tilde{y} - x_0| \approx |\hat{y} - x_0|$ if $r_0$ is sufficiently small. On the other hand, let $n(y)$ and $\tilde{n}(\tilde{y})$ denote the unit outer normal of $\partial\Omega$ and $\partial\tO$, respectively. Then
	\begin{equation}\label{est_nx0}
		|n(y) - n_0| + |\tilde{n}(\tilde{y}) - n_0| + |\tilde{n}(\tilde{y}) - n(y)| \le C|y-x_0|.
	\end{equation}
	
	{\bf Step 2:} We prove the estimate (\ref{est_dwe_O}) in $B(x_0,r)\cap \tO$ which is a subset of $B(x_0,r)\cap \Omega$, i.e.,
	\begin{equation}\label{est_dwe_tO}
		\norm{\nabla w_\e}_{L^\infty(B_r \cap \tO)} \le C\sqrt{\e} + C\frac{r^{2+\sigma}}{\e^{1+\sigma}},
	\end{equation}
	where $B_r = B(x_0,r)$ and $\e\le r\le \sqrt{\e}$. The idea for the proof of (\ref{est_dwe_tO}) is similar to the case of convex domains since, by the definition of $w_\e$, $w_\e$ is a solution of
	\begin{equation*}
		\cL_\e w_\e = 0 \quad \txt{subject to certain Dirichlet boundary condition on } \partial\tO.
	\end{equation*}
	Indeed, it follows from the uniform Lipschitz estimate in $C^{1,\alpha}$ domains that
	\begin{align}\label{est_we_Lip}
		\begin{aligned}
			\norm{\nabla w_\e}_{L^\infty(B_r\cap\tO)} & \le Cr^{-1} \norm{w_\e}_{L^\infty(B_{2r}\cap \tO)} \\
			& \qquad + C \norm{\nabla_{\tan} w_\e }_{L^\infty(B_{2r}\cap \partial\tO)} + C r^\sigma\norm{\nabla_{\tan} w_\e}_{C^\sigma(B_{2r}\cap \partial\tO)}.
		\end{aligned}
	\end{align}
	Note that $\nabla_{\tan} $ can be written as $ (I - \tilde{n}\otimes \tilde{n}) \nabla$ (which can be viewed as the projection of $\nabla$ onto the tangent planes $\tilde{n}^\perp$), where $\tilde{n}$ is the unit outer normal of $\partial\tO$.
	
	We now deal with the estimate of $\nabla_{\tan} w_\e$ on $B_{2r}\cap \tO$. Recall that $w_\e = u_\e - v_\e$ in $\tO$ since $\bar{v}_\e = v_\e$ in $\tO$. Using the fact $u_\e = \e f(x/\e)$ on $\partial\Omega$, we know $(I - n\otimes n)\nabla (u_\e - \e f(x/\e))(y) = 0$ on $\partial\Omega$. It follows that
	\begin{align}\label{est_due_fe}
		\begin{aligned}
			& |(I - \tilde{n}\otimes \tilde{n}) \nabla (u_\e - f_\e)(\tilde{y})| \\
			&\qquad = |(I - \tilde{n}\otimes \tilde{n}) \nabla (u_\e - f_\e)(\tilde{y}) - (I - n\otimes n) \nabla (u_\e - f_\e)(y)| \\
			& \qquad \le |\tilde{n}\otimes \tilde{n} - n\otimes n| \norm{\nabla(u_\e -f_\e)}_{L^\infty(B_{2r}\cap \Omega)} + |\tilde{y} - y| \norm{\nabla^2(u_\e -f_\e)}_{L^\infty(B_{2r}\cap \Omega)} \\
			& \qquad \le C|y-x_0| + C \frac{|y-x_0|^2}{\e},
		\end{aligned}
	\end{align}
	where we have used the mean value theorem in the first inequality, and used (\ref{est_dkue}), (\ref{est_yx0}) and (\ref{est_nx0}) in the second one. Similarly, taking advantage of the fact $v_\e  = f_\e$ on the hyperplane $\partial\bH^d_{n_0}(a)$, we have $(I - n_0\otimes n_0)\nabla(v_\e - f_\e)(\hat{y}) = 0$. By the same argument as (\ref{est_due_fe}), we obtain
	\begin{equation}\label{est_dve_fe}
		|(I - \tilde{n}\otimes \tilde{n}) \nabla (v_\e - f_\e)(\tilde{y})| \le C|y-x_0| + C \frac{|y-x_0|^2}{\e}.
	\end{equation}
	Combining (\ref{est_due_fe}) and (\ref{est_dve_fe}), we have
	\begin{equation*}
		\norm{\nabla_{\tan} w_\e}_{L^\infty(B_{2r}\cap \partial\tO)}  = \norm{(I-\tilde{n}\otimes \tilde{n})\nabla(u_\e - v_\e)}_{L^\infty(B_{2r}\cap \partial\tO)} \le Cr + C\frac{r^2}{\e} \le C\frac{r^2}{\e},
	\end{equation*}
	where the last inequality holds for $r\ge \e$.
	
	A similar argument also shows that $\norm{\nabla_{\tan}^2 w_\e}_{L^\infty(B_{2r}\cap \partial\tO)} \le C \e^{-2} r^2$, which, by interpolation, implies $\norm{\nabla_{\tan} w_\e}_{C^{\sigma}(B_{2r}\cap \partial\tO)} \le C\e^{-1-\sigma} r^{2}$ for any $0<\sigma<1$.
	As a result, to see (\ref{est_dwe_tO}), it is left to estimate $\norm{w_\e}_{L^\infty(B_{2r} \cap \tO)}$.

	{\bf Step 3:} To estimate $w_\e(x)$ in $B_{2r}\cap \tO$, we first claim that
	\begin{equation}\label{est_we_ty}
		|w_\e(\tilde{y})| \le C |y-x_0|^2 \qquad \txt{for all } \tilde{y} \in \partial\tO \cap B(x_0,r_0).
	\end{equation}
	Actually, write agian $w_\e = (u_\e - f_\e) - (v_\e - f_\e)$. Using the cancellation $u_\e - f_\e = 0$ on $\partial\Omega$ and mean value theorem, we have 
	\begin{align*}
		|u_\e(\tilde{y}) - f_\e(\tilde{y})|  & = |u_\e(\tilde{y}) - f_\e(\tilde{y}) - (u_\e(y) - f_\e(y))| \\
		& \le C|\tilde{y} - y| \norm{\nabla(u_\e -f_\e)}_{L^\infty(B_{2r}\cap \Omega)} \\
		& \le C|y-x_0|^2,
	\end{align*}
	where in the last inequality we have used (\ref{est_dkue}) and (\ref{est_yx0}). The estimate for $|v_\e(\tilde{y}) - f_\e(\tilde{y})|$ is the same, which proves (\ref{est_we_ty}).
	
	Then we take advantage of the Poisson integral formula and split it into two parts,
	\begin{equation}\label{eq_Poisson}
		\begin{aligned}
			w_\e(x) & = \int_{\partial \tO} P_{\tO,\e}(x,\tilde{y}) w_\e(\tilde{y}) d\sigma(\tilde{y}) \\
			& = \int_{\partial\tO \cap \{|\tilde{y} - x_0|\le c\sqrt{\e}\}} P_{\tO,\e}(x,\tilde{y}) w_\e(\tilde{y}) d\sigma(\tilde{y}) + \int_{\partial\tO \cap \{|\tilde{y} - x_0| > c\sqrt{\e}\}} P_{\tO,\e}(x,\tilde{y}) w_\e(\tilde{y}) d\sigma(\tilde{y})
		\end{aligned}
	\end{equation}
	where $P_{\tO,\e}$ is the Poisson kernel of $\cL_\e$ in $\tO$ and satisfies the same estimate as (\ref{est_Poi2}) with $\Omega$ replaced by $\tO$. 
	
	To estimate the first term on the right-hand side of (\ref{eq_Poisson}), we apply (\ref{est_Poi2}) and (\ref{est_we_ty}),
	\begin{equation*}
		\begin{aligned}
			&\Bigg| \int_{\partial\tO \cap \{|\tilde{y} - x_0|\le c\sqrt{\e}\}} P_{\tO,\e}(x,\tilde{y}) w_\e(\tilde{y}) d\sigma(\tilde{y}) \Bigg| \\
			& \le C  \int_{\partial\tO \cap \{|\tilde{y} - x_0|\le c\sqrt{\e}\}} \txt{dist}(x,\partial\tO)\frac{|y - x_0|^2}{|x-\tilde{y}|^d} d\sigma(\tilde{y}) \\
			& \le C  \int_{\partial\tO \cap \{|\tilde{y} - x_0|\le c\sqrt{\e}\}} \txt{dist}(x,\partial\tO)\frac{|x - x_0|^2}{|x-\tilde{y}|^d} d\sigma(\tilde{y}) + C  \int_{\partial\tO \cap \{|\tilde{y} - x_0|\le c\sqrt{\e}\}} \frac{\txt{dist}(x,\partial\tO)}{|x-\tilde{y}|^{d-2}} d\sigma(\tilde{y}) \\
			& \le C |x-x_0|^2  + C\txt{dist}(x,\partial\tO) \sqrt{\e} \\
			& \le Cr^2 + r\sqrt{\e},
		\end{aligned}
	\end{equation*}
	where we have used the observation $|y-x_0|^2 \le C|\tilde{y} - x_0|^2 \le C|\tilde{y} - x|^2 + C|x-x_0|^2$.
	
	To bound the second term on the right-hand side of (\ref{eq_Poisson}), we note that (\ref{est_dkue}) and (\ref{est_dkve}) give $\norm{w_\e}_{L^\infty(\tO)} \le C\e$. Then
	\begin{equation*}
		\begin{aligned}
			\Bigg| \int_{\partial\tO \cap \{|\tilde{y} - x_0| > c\sqrt{\e}\}} P_{\tO,\e}(x,\tilde{y}) w_\e(\tilde{y}) d\sigma(\tilde{y}) \Bigg| & \le C\e \int_{\partial\tO \cap \{|\tilde{y} - x_0| > c\sqrt{\e}\}} \frac{\txt{dist}(x,\partial\tO)}{|\tilde{y} - x|^d} d\sigma(\tilde{y}) \\
			& \le C\e \txt{dist}(x,\partial\tO) (\sqrt{\e})^{-1} \le Cr\sqrt{\e}.
		\end{aligned}
	\end{equation*}
	It follows
	\begin{equation*}
		|w_\e(x)| \le Cr^2 + Cr\sqrt{\e}, \qquad \txt{for all } x\in B(0,2r)\cap\tO.
	\end{equation*}
	This, together with (\ref{est_we_Lip}) and the estimates for $\nabla_{\tan} w_\e$ in Step 2, proves (\ref{est_dwe_tO}). 
	
	{\bf Step 4:} Finally, to extend estimate (\ref{est_dwe_tO}) to $B_r\cap \Omega$, it suffices to note that $\partial\tO$ and $\partial \Omega$ are very close near $x_0$ and the $C^2$ regularity of $\bar{v}_\e$ are preserved, thanks to \ref{est_dkve}. Actually, for any point $y^* \in B_r \cap \Omega\setminus\tO$, there exist $y\in B_{cr}\cap \partial\Omega$ and corresponding $\tilde{y} \in B_{cr} \cap \partial\tO$ such that $y^*$ is on the segment connecting $y$ and $\tilde{y}$. Then
	\begin{equation*}
		|\nabla w_\e(y^*) - \nabla w_\e(\tilde{y})| \le C\norm{\nabla^2 w_\e}_{L^\infty(B_{2r})} |y^* - \tilde{y}| \le C\frac{|y - \tilde{y}|}{\e} \le C\frac{r^2}{\e},
	\end{equation*}
	where we have used (\ref{est_yx0}) in the last inequality. Finally, applying (\ref{est_dwe_tO}) for $\nabla w_\e(\tilde{y})$, we obtain
	\begin{equation*}
		|\nabla w_\e(y^*) \le C\sqrt{\e} + C\frac{r^{2+\sigma}}{\e^{1+\sigma}}, \qquad \txt{for all } y^* \in  B_r \cap \Omega\setminus\tO.
	\end{equation*}
	Combing this with (\ref{est_dwe_tO}), we obtain (\ref{est_dwe_O}) as desired.
\end{proof}

In view of (\ref{eq_Poisson_exp}), to study the oscillating behavior of $\omega_\e$, the difficulty is to understand the behavior of $ \nabla \Phi_\e^*$ near the boundary. This can be done by applying Theorem \ref{thm_dwe} to $u_{\e,j}^{*\beta} = \Phi^{*\beta}_{\e,j}(x) - P_j^\beta(x) - \e\chi_j^{*\beta}(x/\e)$ for each $1\le j\le d, 1\le \beta\le m$. Clearly, by definitions of $\Phi$ and $\chi$, $u_{\e,j}^\beta$ satisfies
\begin{equation}\label{eq_uechie}
	\left\{
	\begin{aligned}
		\cL_\e^* u_{\e,j}^{*\beta}(x) &= 0 \qquad & \txt{ in }& \Omega, \\
		u_{\e,j}^{*\beta}(x) &= -\e\chi_j^{*\beta}(x/\e) \qquad & \txt{ on } &\partial\Omega.
	\end{aligned}
	\right.
\end{equation}
For each fixed $x_0\in \partial\Omega$, the system (\ref{eq_vefe}) associated with the adjoint operator $\cL_\e^*$ and $f_\e = -\e \chi_j^{*\beta}(x/\e)$ has a solution $v_{\e,j}^{*\beta}$ of form
\begin{equation*}
	v_{\e,j}^{*\beta}(x) = \e V_j^{*\beta} \bigg( \frac{x-(x\cdot n_0 + a)n_0}{\e}, - \frac{x\cdot n_0 + a}{\e} \bigg), \qquad \txt{for } x\cdot n_0 \le -a,
\end{equation*}
where $a = -x_0\cdot n_0$ and $V_j^{*\beta} = V_j^{*\beta}(\theta,t)$ is a solution of
\begin{equation*}
	\left\{
	\begin{aligned}
		- \Bigg( \begin{aligned}
			N^T \nabla_\theta \\  \partial_t \ \ \;
		\end{aligned}
		\Bigg) \cdot B^*
		\Bigg( \begin{aligned}
			N^T \nabla_\theta \\  \partial_t \ \ \;
		\end{aligned}
		\Bigg) V_j^{*\beta}
		&= 0 \qquad & \txt{ in }& \T^d\times (0,\infty), \\
		V_j^{*\beta} &= -\chi_j^{*\beta} \qquad & \txt{ on } & \T^d\times \{0\},
	\end{aligned}
	\right.
\end{equation*}
given by Theorem \ref{thm_halfV}. Note that $V_j^{*\beta}$ also depends on $n_0$. Now let $\bar{v}_{\e,j}^\beta$ be the extension of $v_{\e,j}^{*\beta}$ given by (\ref{eq_baru}) with $k=2$ and a change of variables. Precisely,
\begin{equation}\label{eq_barvej}
	\bar{v}_{\e,j}^{*\beta}(x) = 
	\left\{
	\begin{aligned}
		& \e V_j^{*\beta} \bigg( \frac{x-(x\cdot n_0 + a)n_0}{\e}, - \frac{x\cdot n_0 + a}{\e} \bigg) \qquad & \txt{ if }& x\cdot n_0 \le  -a, \\
		& \sum_{j=1}^3 \e \lambda_j V_j^{*\beta} \bigg( \frac{x-(x\cdot n_0 + a)n_0}{\e},  \frac{j(x\cdot n_0 + a)}{\e} \bigg) \qquad & \txt{ if } & x\cdot n_0 > -a.
	\end{aligned}
	\right.
\end{equation}

Then, one may deduce from Theorem \ref{thm_dwe} that
\begin{theorem}
	Let $\e\le r\le \sqrt{\e}$ and $\sigma \in (0,1)$. Then for any $x\in B(x_0,r) \cap \Omega$,
	\begin{equation}\label{est_PhiExp}
		\bigg| \nabla \bigg( \Phi^{*\beta}_{\e,j}(x) - P_j^\beta(x) - \e\chi_j^{*\beta}(x/\e) - \bar{v}_{\e,j}^{*\beta}(x) \bigg)\bigg|
		\le C\sqrt{\e} + C\frac{r^{2+\sigma}}{\e^{1+\sigma}},
	\end{equation}
	where $C$ depends on $d,m,\mu,\sigma,\Omega,A$ and $f$.
\end{theorem}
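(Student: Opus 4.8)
The plan is to deduce (\ref{est_PhiExp}) directly from Theorem~\ref{thm_dwe}, applied to one suitably chosen solution. For fixed $1\le j\le d$ and $1\le\beta\le m$, I would set
\[
u_{\e,j}^{*\beta}(x):=\Phi_{\e,j}^{*\beta}(x)-P_j^\beta(x)-\e\chi_j^{*\beta}(x/\e),
\]
and first verify that it solves the Dirichlet problem (\ref{eq_uechie}). The interior equation $\cL_\e^* u_{\e,j}^{*\beta}=0$ in $\Omega$ holds because $\cL_\e^*\Phi_{\e,j}^{*\beta}=0$ by definition of the adjoint Dirichlet corrector, while $\cL_\e^*\bigl(P_j^\beta+\e\chi_j^{*\beta}(\cdot/\e)\bigr)=0$ follows from the adjoint cell equation $\cL_1^*(\chi_j^{*\beta}+P_j^\beta)=0$ together with the rescaling identity $\e P_j^\beta(\cdot/\e)=P_j^\beta$. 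On $\partial\Omega$ the boundary condition $\Phi_{\e,j}^{*\beta}=P_j^\beta$ gives $u_{\e,j}^{*\beta}=-\e\chi_j^{*\beta}(\cdot/\e)$, exactly as in (\ref{eq_uechie}).

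The key point is that (\ref{eq_uechie}) is an instance of the problem (\ref{eq_uefe}) treated in Theorem~\ref{thm_dwe}: the operator is the adjoint $\cL_\e^*$, which satisfies (\ref{cdn_ellipticity}), (\ref{cdn_periodicity}) and (\ref{cdn_smooth}) since $A^*$ does, and the boundary datum is $\e f(\cdot/\e)$ with the smooth $1$-periodic function $f=-\chi_j^{*\beta}$ (smoothness of $\chi_j^{*\beta}$ following from (\ref{cdn_smooth}) for $A^*$ by elliptic regularity on $\T^d$). For the fixed point $x_0\in\partial\Omega$ with normal $n_0$, the corresponding half-space comparison is $v_{\e,j}^{*\beta}$, constructed from the lifted system of Theorem~\ref{thm_halfV}(i) for the adjoint matrix $B^*$, and its $C^2$ extension $\bar v_{\e,j}^{*\beta}$ is the function defined in (\ref{eq_barvej}) via Proposition~\ref{prop_ExtP} with $k=2$. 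Consequently $w_\e:=u_{\e,j}^{*\beta}-\bar v_{\e,j}^{*\beta}$ is precisely the function $w_\e$ of Theorem~\ref{thm_dwe}, and for $\e\le r\le\sqrt\e$ and $\sigma\in(0,1)$ the estimate (\ref{est_dwe_O}) becomes
\[
\Norm{\nabla\bigl(\Phi_{\e,j}^{*\beta}-P_j^\beta-\e\chi_j^{*\beta}(\cdot/\e)-\bar v_{\e,j}^{*\beta}\bigr)}_{L^\infty(B(x_0,r)\cap\Omega)}\le C\sqrt\e+C\frac{r^{2+\sigma}}{\e^{1+\sigma}},
\]
which is exactly (\ref{est_PhiExp}).

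I do not anticipate a genuine obstacle at this stage: the substantive work is contained in Theorem~\ref{thm_dwe}, which has already been established, and the present statement is its corollary. The only matters requiring care are bookkeeping ones: the constant of Theorem~\ref{thm_dwe}, stated to depend on $f$, here amounts to a dependence on $\norm{\chi^*}_{C^k(\T^d)}$ and hence ultimately on $A$; and $V_j^{*\beta}$, and thus $\bar v_{\e,j}^{*\beta}$, depends on the normal $n_0$ at $x_0$, so the uniformity of (\ref{est_PhiExp}) over $x_0\in\partial\Omega$ rests on the fact that the bounds of Theorem~\ref{thm_halfV}(i) and of Proposition~\ref{prop_ExtP} are uniform in the normal direction. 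Since there are only finitely many pairs $(j,\beta)$, a single constant $C$ serves for all components.
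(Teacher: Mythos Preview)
Your proposal is correct and follows exactly the paper's own route: the paper states that the theorem is deduced from Theorem~\ref{thm_dwe} applied to $u_{\e,j}^{*\beta}=\Phi^{*\beta}_{\e,j}-P_j^\beta-\e\chi_j^{*\beta}(\cdot/\e)$, after verifying that this function solves (\ref{eq_uechie}) and identifying the corresponding half-space extension $\bar{v}_{\e,j}^{*\beta}$ of (\ref{eq_barvej}). Your write-up in fact supplies more justification than the paper, which simply asserts ``one may deduce from Theorem~\ref{thm_dwe}''; the bookkeeping points you raise (dependence of the constant on $\chi^*$ and hence on $A$, uniformity in $n_0$) are handled exactly as you say.
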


\section{A partition of unity}
For simplicity of notation, throughout this section we will write $\varkappa\circ n(x)$ as $\varkappa(x)$ if no ambiguity. For a large class of smooth domains, such as domains of finite type addressed in this paper, it is reasonable to assume that
\begin{equation*}
	\varkappa(\cdot)^{-1} \in L^{p,\infty}(\partial\Omega),
\end{equation*}
for some fixed $p\in (0,d-1]$, where $d\ge 2$. Define
\begin{equation}\label{eq_gamma}
	\gamma = \frac{d-1}{p}.
\end{equation}
Obviously, $\gamma \ge 1$.

In the next lemma, we will construct a Calder\'{o}n-Zygmund-type decomposition adapted to the function $\varkappa(x)$. Essentially this $L^\infty$-based decomposition is a modified version of \cite[Proposition 3.1]{AKMP16} or a special case of $L^q$-based decomposition in \cite[Lemma 7.2]{ShenZhuge16}. However, in our application for general domains, the $L^\infty$ based decomposition is more flexible and convenient. We mention that the partition of unity, provided by the next lemma, will play a crucial role in the analysis of oscillating Dirichlet problems. As in \cite{ShenZhuge16}, we first describe such construction in flat spaces.

\begin{lemma}\label{lem_CZdecomp}
	Let $F$ be a bounded non-negative function on some cube $Q_0\subset \R^{d-1}$. Let $\tau>0$ be a small parameter. Then there exists a finite sequence of dyadic cubes (obtained by bisecting $Q_0$) $\cP = \cP_\tau = \{Q_j:j=1,2,\cdots\}$ such that
	
	(i) The interiors of these cubes are disjoint.
	
	(ii) $Q_0 = \cup_j Q_j$.
	
	(iii) For each $Q_j$,
	\begin{equation}\label{est_Fle}
		\norm{F}_{L^\infty(6Q_j)} \le \frac{\tau}{\ell(Q_j)},
	\end{equation}
	and
	\begin{equation}\label{est_Fge}
		\norm{F}_{L^\infty(6Q^+_j)} > \frac{\tau}{\ell(Q^+_j)},
	\end{equation}
	where $Q_j^+$ is the parent of $Q_j$.
	
	(iv) If $\txt{dist}(Q_j,Q_k) = 0$, then
	\begin{equation*}
		\frac{1}{2} \ell(Q_j) \le |Q_k| \le 2 \ell(Q_j).
	\end{equation*}
	
	(v) There exists an absolute constant $C>0$ such that
	\begin{equation}\label{est_NumOfCubes}
		\#\{Q_j: \ell(Q_j) \ge \lambda \tau \} \le C(\lambda \tau)^{-(d-1)} \sigma(\{ x\in Q_0: F(x) \le \lambda^{-1} \}).
	\end{equation}
\end{lemma}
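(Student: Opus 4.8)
The plan is to perform a standard stopping-time (Calder\'on--Zygmund) construction on the dyadic tree generated by bisecting $Q_0$, using the stopping condition dictated by \eqref{est_Fle}. Concretely, starting from $Q_0$, I would subdivide any dyadic cube $Q$ into its $2^{d-1}$ children whenever $\norm{F}_{L^\infty(6Q)} > \tau/\ell(Q)$, and stop (placing $Q$ in the collection $\cP$) as soon as $\norm{F}_{L^\infty(6Q)} \le \tau/\ell(Q)$. Since $F$ is bounded, say $\norm{F}_{L^\infty(Q_0)} \le M$, every branch must terminate: once $\ell(Q) \le \tau/M$ the stopping inequality is automatically satisfied, so the tree of ``bad'' (non-stopped) cubes has finite depth and $\cP$ is a finite family. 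Properties (i) and (ii) are then immediate from the fact that $\cP$ consists of the maximal stopped cubes in a dyadic tree: distinct stopped cubes have disjoint interiors, and every point of $Q_0$ lies in exactly one of them. For (iii), \eqref{est_Fle} is the stopping condition itself, while \eqref{est_Fge} holds because the parent $Q_j^+$ was \emph{not} stopped (it was subdivided), which is exactly the negation of the stopping inequality at scale $\ell(Q_j^+)$.

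Next I would address the ``good neighbor'' property (iv). Suppose $Q_j$ and $Q_k$ are adjacent stopped cubes with, say, $\ell(Q_k) \ge 4\ell(Q_j)$; I want a contradiction. The key geometric observation is that if $\ell(Q_k)$ is much larger than $\ell(Q_j)$ and they touch, then a suitable ancestor $\widetilde{Q}_j$ of $Q_j$ with $\ell(\widetilde{Q}_j) = \ell(Q_k)$ (or the parent $Q_k^+$, depending on how the dilation constant $6$ interacts) has $6\widetilde{Q}_j \supset Q_k$, hence $6\widetilde{Q}_j \supset $ a point where the lower bound for $Q_k^+$ forces $F$ to be large relative to scale $\ell(Q_k)$; but $\widetilde{Q}_j$ is a strict ancestor of the stopped cube $Q_j$, hence was subdivided, so this is consistent — the contradiction instead comes from comparing $Q_k$'s parent's lower bound \eqref{est_Fge} against the fact that $6$ times a proper ancestor of $Q_j$ at that scale contains $6Q_k^+$. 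The cleanest route is: adjacency plus the scale gap implies $6Q_j' \supseteq 6Q_k^+$ for the ancestor $Q_j'$ of $Q_j$ at scale $\ell(Q_k^+)$; then $\norm{F}_{L^\infty(6Q_j')} \ge \norm{F}_{L^\infty(6Q_k^+)} > \tau/\ell(Q_k^+) = \tau/\ell(Q_j')$, so $Q_j'$ satisfies the subdivision condition — which it must, being a proper ancestor — so no contradiction yet; one then iterates down to find a stopped descendant of $Q_j'$ on the branch through $Q_j$ whose parent violates the bound, forcing $\ell(Q_j) \gtrsim \ell(Q_k)$. I expect the bookkeeping here — tracking how the fixed dilation factor $6$ and the factor $2$ in (iv) interact through the chain of ancestors — to be the main obstacle, though it is entirely elementary; choosing the dilation constants consistently (this is presumably why $6$ rather than $2$ appears) is what makes it go through.

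Finally, for the counting estimate (v), fix $\lambda > 0$ and consider a stopped cube $Q_j$ with $\ell(Q_j) \ge \lambda\tau$. By \eqref{est_Fge} applied to its parent, $\norm{F}_{L^\infty(6Q_j^+)} > \tau/\ell(Q_j^+) \ge \tau/(2\ell(Q_j)) \cdot 2 = \tau/\ell(Q_j^+)$; more usefully, since $\ell(Q_j^+) = 2\ell(Q_j) \ge 2\lambda\tau$, we get a point $z_j \in 6Q_j^+$ with $F(z_j) > \tau/\ell(Q_j^+)$, which does not directly bound $F$ from below by $\lambda^{-1}$. Instead I would run the argument in the contrapositive direction on the \emph{children}: each such $Q_j$ has a parent $Q_j^+$ that was subdivided, but I want to say that on most of $6Q_j$ itself the function $F$ is \emph{small}, namely $F \le \tau/\ell(Q_j) \le \tau/(\lambda\tau) = \lambda^{-1}$ by \eqref{est_Fle}. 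Thus $6Q_j \subset \{x : F(x) \le \lambda^{-1}\}$ (intersected with a fixed dilate of $Q_0$). Now the cubes $\{Q_j\}$ with $\ell(Q_j) \ge \lambda\tau$ have bounded overlap in their $6$-dilates (a consequence of (iv) and the dyadic structure: boundedly many dyadic cubes of comparable size can have a common point in their $6$-fold dilates), so $\sum_{j:\,\ell(Q_j)\ge\lambda\tau} |Q_j| \le C\, \sigma(\{x \in CQ_0 : F(x) \le \lambda^{-1}\})$, and since each term $|Q_j| = \ell(Q_j)^{d-1} \ge (\lambda\tau)^{d-1}$, dividing gives $\#\{Q_j : \ell(Q_j) \ge \lambda\tau\} \le C(\lambda\tau)^{-(d-1)}\sigma(\{x \in Q_0 : F(x) \le \lambda^{-1}\})$, which is \eqref{est_NumOfCubes} (absorbing the harmless dilation of $Q_0$ into the constant, or observing $6Q_j \cap Q_0$ already captures most of the mass). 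The only subtlety is the bounded-overlap claim and making sure $6Q_j$ is essentially inside $Q_0$ up to boundary cubes, both of which follow from the regularity property (iv) and standard dyadic facts.
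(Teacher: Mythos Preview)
Your construction is the paper's: it too takes the maximal dyadic cubes in $Q_0$ satisfying \eqref{est_Fle}, which is exactly your stopping-time description, and handles (i)--(iii) identically (boundedness of $F$ forces termination, maximality gives \eqref{est_Fge}).

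Two remarks on the remaining parts. For (iv), your sketched inclusions point the wrong way and this is why you never reach a contradiction. The one-line argument is: if $\txt{dist}(Q_j,Q_k)=0$ with $\ell(Q_k)\ge 4\ell(Q_j)$, then $6Q_j^+\subset 6Q_k$ (a direct check using the dilation factor $6$), so
\[
\|F\|_{L^\infty(6Q_k)}\ge\|F\|_{L^\infty(6Q_j^+)}>\tau/\ell(Q_j^+)\ge\tau/\ell(Q_k),
\]
contradicting that $Q_k$ was stopped. The paper does not spell this out either; it cites \cite[Lemma 3.2]{Shen98}.

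For (v), the bounded-overlap of the $6Q_j$'s is a red herring: the $Q_j$'s themselves are pairwise disjoint subcubes of $Q_0$, and by \eqref{est_Fle} each $Q_j$ with $\ell(Q_j)\ge\lambda\tau$ lies in $\{x\in Q_0: F(x)\le \tau/\ell(Q_j)\}\subset\{x\in Q_0: F(x)\le\lambda^{-1}\}$. Summing volumes gives \eqref{est_NumOfCubes} immediately with no dilation of $Q_0$ needed. The paper obtains the same bound by first counting cubes of a fixed side length $t=\lambda\tau$ (via exactly this disjointness-plus-inclusion argument) and then summing the resulting estimate over dyadic scales $t\ge\lambda\tau$; your direct volume bound, once stripped of the overlap detour, is a slightly leaner route to the same inequality.
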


\begin{proof}[Proof of Lemma \ref{lem_CZdecomp}]
	Let $U$ be the set of all dyadic cubes in $Q_0$ such that $(\ref{est_Fle})$ holds. We say $Q$ is a maximal element of $U$ if $Q$ is not properly contained in any other cube in $U$. Let $\cP$ denote the set of all maximal elements of $U$. Clearly, by definition, the interiors of cubes in $\cP$ are disjoint and (\ref{est_Fle}), (\ref{est_Fge}) are satisfied for each $Q_j$ in $\cP$.
	
	To see (ii), it suffices to note that $F$ is bounded and hence (\ref{est_Fle}) must be satisfied for sufficiently small cubes. And (iv) follows by a similar argument as \cite[Lemma 3.2]{Shen98} which we will omit here.
	
	Finally, to see (v), let $t = 2^{-k} \ell(Q_0)$ be fixed. We consider the number of cubes $Q_j\in \cP$ with $\ell(Q_j) = t$. It follows from (\ref{est_Fle}) that
	\begin{align*}
		\#\{Q_j: \ell(Q_j) = t \} & = t^{1-d} \sigma\Big(\bigcup_{\ell(Q_j) = t} Q_j\Big) \\
		& \le t^{1-d} \sigma( \{ x\in \partial\Omega: F(x) \le t^{-1}\tau \}).
	\end{align*}
	Set $\lambda = t \tau^{-1}$, we obtain
	\begin{equation}\label{est_NoQj}
		\#\{Q_j: \ell(Q_j) = \lambda \tau \} \le (\lambda \tau)^{-(d-1)} \sigma( \{ x\in \partial\Omega: F(x) \le \lambda^{-1} \}).
	\end{equation}
	Finally, replacing $\lambda$ with $2^{k} \lambda$ in the last inequality and summing up all $k\ge 0$, we obtain the desired estimate (\ref{est_NumOfCubes}).
\end{proof}

Let $F\in L^\infty(\partial\Omega)$. Following the lines of \cite{ShenZhuge16}, we can perform a partition of unity on $\partial\Omega$ based on Lemma \ref{lem_CZdecomp}. For readers' convenience, we will provide the outline of the construction.

Fix $x_0\in \partial\Omega$. Let $r_0$ be sufficiently small so that $B(x_0,r_0)\cap \partial\Omega$ is given by the local graph in a coordinate system. Let $\partial\bH_{n_0}^d(a)$ denote the tangent plane for $\partial\Omega$ at $x_0$, where $n_0 = n(x_0)$ and $a = - x_0\cdot n_0$. For $x\in B(x_0,r_0)\cap \partial\Omega$, let
\begin{equation*}
	P(x) = x - ((x-x_0)\cdot n_0) n_0
\end{equation*}
denote its projection on $\partial\bH_{n_0}^d(a)$. Note that $P$ is one-to-one from $B(x_0,r_0)\cap \partial\Omega$ to its image in $\partial\bH_{n_0}^d(a)$ and keeps length and measure comparable. To construct a partition of unity on $B(x_0,r_0)\cap \partial\Omega$ for $F$, we use the inverse map $P^{-1}$ to lift a partition on the tangent plane, given in Lemma \ref{lem_CZdecomp}, to $\partial\Omega$. Precisely, for a fixed cube $Q_0$ in $\partial\bH_{n_0}^d(a)$ such that $B(x_0,2r_0)\cap \partial\Omega \subset P^{-1}(Q_0) \subset B(x_0,4r_0\sqrt{d})\cap \partial\Omega $, we apply Lemma \ref{lem_CZdecomp} to $Q_0$ for function $F\circ P^{-1}$. This generates a finite sequence of dyadic cubes $\{Q_j\}$ satisfying the properties in Lemma \ref{lem_CZdecomp}. Let $x_j$ be the center of $Q_j$ and $r_j$ be the side length. Let $\widetilde{Q}_j = P^{-1}(Q_j)$. Then
\begin{equation*}
	\widetilde{Q}_0 = P^{-1}(Q_0) = \bigcup_j \widetilde{Q}_j
\end{equation*}
gives a decomposition of $\widetilde{Q}_0$. Also, let $\tilde{x}_j = P^{-1}(x_j)$ and $t\widetilde{Q}_j = P^{-1}(tQ_j)$. Now for each $\widetilde{Q}_j$, we choose $\eta_j \in C_0^\infty(\R^d)$ such that $0\le \eta_j \le 1, \eta_j = 1$ on $\widetilde{Q}_j, \eta_j = 0$ on $\partial\Omega\setminus 2\widetilde{Q}_j$, and $|\nabla^k \eta_j| \le Cr_j^{-k}$. Note that by Lemma \ref{lem_CZdecomp} (iv), $1\le \sum_j \eta_j \le C_0$ on $\widetilde{Q}_0$, where $C_0$ is a constant depending only on $d$ and $\Omega$. Finally, we set
\begin{equation*}
	\varphi_j(x) = \frac{\eta_j(x)}{\sum_k \eta_k(x)}.
\end{equation*}
Clearly, $\sum_j \varphi_j = 1$ on $\widetilde{Q}_0$, $0\le \varphi_j \le 1, \varphi_j \ge C_0^{-1}$ on $\widetilde{Q}_j, \varphi_j = 0$ on $\partial\Omega\setminus 2\widetilde{Q}_j$, and $|\nabla^k \varphi_j| \le Cr_j^{-k}$. Further more, the properties (iii) and (v) in Lemma \ref{lem_CZdecomp} are preserved, i.e.,
\begin{equation}\label{est_Flege}
	\norm{F}_{L^\infty(6\widetilde{Q}_j)} \le \frac{\tau}{r_j}, \qquad \norm{F}_{L^\infty(18\widetilde{Q}_j)} > \frac{\tau}{r_j},
\end{equation}
and
\begin{equation}\label{est_NumQ}
	\#\{\widetilde{Q}_j: r_j \ge \lambda \tau \} \le C(\lambda \tau)^{-(d-1)} \sigma(\{ x\in \widetilde{Q}_0: F(x) \le \lambda^{-1} \}).
\end{equation}

For our application in homogenization, we will apply the above decomposition to $F = \varkappa^{1/\gamma}$, where $\gamma$ is defined in (\ref{eq_gamma}). Let $\{\widetilde{Q}_j: j=1,2,\cdots\}$ be the generated cubes on $\partial\Omega$ and other notations are also kept as before. By (\ref{est_Flege}), for each $\widetilde{Q}_j$, there exists $z_j \in 18\widetilde{Q}_j$ such that
\begin{equation}\label{cdn_zj}
	\varkappa(z_j) > \Big( \frac{\tau}{r_j} \Big)^\gamma.
\end{equation}

\begin{lemma}
	There exists some $C>0$ such that for each $j$,
	\begin{equation*}
		\tau \le r_j \le C \sqrt{\tau}.
	\end{equation*}
\end{lemma}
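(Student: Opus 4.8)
The plan is to establish the two inequalities separately, using the defining properties (\ref{est_Flege}) of the lifted cubes $\widetilde{Q}_j$ together with the standing hypothesis $\varkappa(\cdot)^{-1}\in L^{p,\infty}(\partial\Omega)$, and exploiting the identity $\gamma p=d-1$ coming from (\ref{eq_gamma}).

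For the lower bound $\tau\le r_j$, I would first record the elementary fact that the Diophantine function is bounded above on $\bS^{d-1}$: testing (\ref{cdn_Diophantine}) against the lattice vector $\xi=e_i$ gives $\varkappa(n)\le|(I-n\otimes n)e_i|=\sqrt{1-n_i^2}$ for each $i$, and since $\sum_i n_i^2=1$ forces some $n_i^2\ge 1/d$, we get $\varkappa(n)\le\sqrt{1-1/d}<1$ for every $n\in\bS^{d-1}$. Consequently $F=\varkappa^{1/\gamma}\le 1$ on all of $\partial\Omega$. The second inequality in (\ref{est_Flege}) then reads $\tau/r_j<\norm{F}_{L^\infty(18\widetilde{Q}_j)}\le 1$, which yields $r_j>\tau$ at once.

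For the upper bound, the idea is that the first inequality in (\ref{est_Flege}) forces $6\widetilde{Q}_j$ to lie inside a sublevel set of $\varkappa$ whose surface measure is controlled by the weak-$L^p$ hypothesis. Indeed, (\ref{est_Flege}) gives $\varkappa(x)=F(x)^\gamma\le(\tau/r_j)^\gamma$ for a.e. $x\in 6\widetilde{Q}_j$, so, up to a null set, $6\widetilde{Q}_j\subset\{x\in\partial\Omega:\varkappa(x)\le(\tau/r_j)^\gamma\}$. Since we already know $\tau<r_j$, the threshold $(\tau/r_j)^\gamma$ lies in $(0,1)$, so the assumption $\varkappa^{-1}\in L^{p,\infty}(\partial\Omega)$ gives $\sigma\big(\{x:\varkappa(x)\le(\tau/r_j)^\gamma\}\big)\le C(\tau/r_j)^{\gamma p}=C(\tau/r_j)^{d-1}$, using $\gamma p=d-1$. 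On the other hand, since the projection $P$ is bi-Lipschitz with constants close to $1$, we have $\sigma(6\widetilde{Q}_j)\ge\sigma(\widetilde{Q}_j)\ge c\,r_j^{d-1}$. Combining, $c\,r_j^{d-1}\le C(\tau/r_j)^{d-1}$, hence $r_j^2\le C'\tau$ and $r_j\le C\sqrt{\tau}$.

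I do not anticipate a serious obstacle: the only points requiring a little care are the almost-everywhere statements (using that $L^\infty$ norms here are essential suprema) and the comparability of $\sigma(\widetilde{Q}_j)$ with $|Q_j|=r_j^{d-1}$, both routine given the construction in this section. The one genuinely load-bearing observation is that the exponent $\gamma$ in the Calderón--Zygmund decomposition is chosen precisely so that $\gamma p=d-1$; this is exactly what makes the two estimates for $\sigma(6\widetilde{Q}_j)$ match up to produce the $\sqrt{\tau}$ scaling, and the lower bound $r_j>\tau$ is needed beforehand only to legitimize the application of the weak-$L^p$ bound at a threshold below $1$.
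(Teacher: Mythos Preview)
Your proof is correct. The lower bound argument matches the paper's (you supply an explicit verification of $\varkappa\le 1$ that the paper leaves implicit). For the upper bound, however, you take a different and somewhat more direct route than the paper. The paper first establishes an auxiliary local integral estimate
\[
\bigg(\fint_{B(x,r)\cap\partial\Omega}\varkappa^{-q}\bigg)^{1/q}\le \frac{C}{r^{\gamma}}\qquad (0<q<p),
\]
which is essentially the Kolmogorov embedding $L^{p,\infty}\hookrightarrow L^q_{\mathrm{loc}}$, and then combines it with $\norm{\varkappa}_{L^\infty(6\widetilde{Q}_j)}\le(\tau/r_j)^{\gamma}$ via the trivial inequality $1\le\big(\fint\varkappa^{-q}\big)^{1/q}\norm{\varkappa}_{L^\infty}$. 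You instead read the first inequality in (\ref{est_Flege}) as a containment of $6\widetilde{Q}_j$ in a sublevel set of $\varkappa$ and invoke the weak-$L^p$ hypothesis directly to bound its measure, matching it against $\sigma(6\widetilde{Q}_j)\gtrsim r_j^{d-1}$. Your approach avoids the detour through the averaged estimate and makes the role of the identity $\gamma p=d-1$ completely transparent; the paper's approach, on the other hand, records an estimate of independent interest that is used again later. One small remark: the weak-$L^p$ bound holds for all thresholds, so you do not literally need $r_j>\tau$ to ``legitimize'' its application---rather, $r_j>\tau$ is what makes the resulting measure bound nontrivial.
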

\begin{proof}
	Note that $\varkappa \le 1$. This implies that $r_j \ge \tau$, due to (\ref{est_Flege}). To see the other direction, we first claim that
	for any $0<q<p$, $x\in \partial\Omega$ and $0<r<\txt{diam}(\Omega)$,
	\begin{equation}\label{est_chiq}
		\bigg( \fint_{B(x,r)\cap \partial\Omega} \varkappa^{-q} \bigg)^{1/q} \le \frac{C}{r^\gamma},
	\end{equation}
	where $C$ depends only on $d,q,\gamma$ and $\Omega$. This claim is an extension of \cite[Proposition 7.1]{ShenZhuge16}, whose proof is almost the same. We omit the details here.
	
	Now by (\ref{est_chiq}) and (\ref{est_Flege}),  we have
	\begin{equation*}
		1 \le \bigg( \fint_{6\widetilde{Q}_j} \varkappa^{-q} \bigg)^{1/q} \norm{\varkappa}_{L^\infty(6\widetilde{Q}_j)} \le C r_j^{-\gamma} \Big( \frac{\tau}{r_j} \Big)^{\gamma},
	\end{equation*}
	which implies $r_j \le C\sqrt{\tau}$.
\end{proof}

The following lemma is the same as \cite[Proposition 7.4]{ShenZhuge16} which will be useful to us. We give a simpler proof here based on (\ref{est_NumQ}).
\begin{lemma}\label{lem_rj}
	Let $0<\alpha<d-1$, then
	\begin{equation*}
		\sum_{j} r_j^{d-1+\alpha} \le C\tau^\alpha,
	\end{equation*}
	where $C$ depends only on $\alpha, \gamma, d$ and $\Omega$.
\end{lemma}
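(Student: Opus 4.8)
The plan is to combine the counting estimate~(\ref{est_NumQ}) with the weak-$L^p$ bound on $\varkappa^{-1}$, and to organize the sum $\sum_j r_j^{d-1+\alpha}$ according to the dyadic size of the cubes. By the preceding lemma we already know $\tau\le r_j\le C\sqrt{\tau}$ for every $j$, so each $r_j$ lies in exactly one dyadic band $[2^k\tau,2^{k+1}\tau)$ with $0\le k\le K$ and $2^K\approx\tau^{-1/2}$; once the right input bound is in place the whole estimate is just a geometric summation over $k$.

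The first step is to record the consequence of~(\ref{est_NumQ}) for our choice $F=\varkappa^{1/\gamma}$. Since
\[
\{x\in\widetilde{Q}_0:F(x)\le\lambda^{-1}\}=\{x\in\widetilde{Q}_0:\varkappa(x)^{-1}\ge\lambda^{\gamma}\},
\]
the hypothesis $\varkappa^{-1}\in L^{p,\infty}(\partial\Omega)$ gives $\sigma(\{\varkappa^{-1}\ge\lambda^\gamma\})\le C\lambda^{-\gamma p}=C\lambda^{-(d-1)}$, using $\gamma p=d-1$. Inserting this into~(\ref{est_NumQ}) with $\lambda=2^k$ yields, for every integer $k\ge0$,
\[
\#\{j:\ r_j\ge 2^k\tau\}\ \le\ C(2^k\tau)^{-(d-1)}\,2^{-k(d-1)}\ =\ C\,\tau^{-(d-1)}\,2^{-2k(d-1)}.
\]

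The second step is the dyadic summation. Writing $n_k=\#\{j: r_j\in[2^k\tau,2^{k+1}\tau)\}$, we have $n_k\le\#\{j: r_j\ge 2^k\tau\}$, hence
\[
\begin{aligned}
\sum_j r_j^{d-1+\alpha}
&\le \sum_{k\ge0} n_k\,(2^{k+1}\tau)^{d-1+\alpha}
\le C\,\tau^{-(d-1)}\sum_{k\ge0}2^{-2k(d-1)}(2^k\tau)^{d-1+\alpha}\\
&= C\,\tau^{\alpha}\sum_{k\ge0}2^{k(\alpha-(d-1))}.
\end{aligned}
\]
Because $\alpha<d-1$ the exponent $\alpha-(d-1)$ is negative, so the geometric series converges to a constant depending only on $\alpha$ and $d$, and we conclude $\sum_j r_j^{d-1+\alpha}\le C\tau^\alpha$ with $C=C(\alpha,\gamma,d,\Omega)$.

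There is essentially no serious obstacle here: the content lies entirely in assembling~(\ref{est_NumQ}) and the $L^{p,\infty}$ bound, after which the argument is a one-line geometric series; the point of isolating this lemma is that the quantity $\sum_j r_j^{d-1+\alpha}$ is exactly what is needed to control several of the boundary-layer error terms in Section~6. The only point to be mildly careful about is that~(\ref{est_NumQ}) is being applied at the discrete scales $\lambda=2^k$; alternatively one may bypass this by using, directly from the proof of Lemma~\ref{lem_CZdecomp}, the per-scale bound $\#\{j: r_j=t\}\le t^{1-d}\sigma(\{F\le t^{-1}\tau\})$ and summing over dyadic $t\in[\tau,C\sqrt{\tau}]$, which gives the same result.
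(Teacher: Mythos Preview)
Your proof is correct and follows essentially the same route as the paper: both arguments group the cubes dyadically in $r_j$, invoke the counting bound~(\ref{est_NumQ}), and use the weak-$L^p$ control on $\varkappa^{-1}$ together with the hypothesis $\alpha<d-1$. The only organizational difference is that you substitute the $L^{p,\infty}$ bound immediately to get an explicit geometric decay $2^{-2k(d-1)}$ and then sum, whereas the paper postpones this step, first rewriting the dyadic sum as a layer-cake integral $\tau^\alpha\int_{\partial\Omega}\varkappa^{-\alpha/\gamma}\,d\sigma$ and then observing this is finite because $\alpha/\gamma<p$; the computations are equivalent.
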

\begin{proof}
	It follows from (\ref{est_NumQ}) that
	\begin{align*}
		\sum_j r_j^{d-1+\alpha} & \le \sum_k \sum_{2^{k-1}\tau \le r_j < 2^k \tau} (2^k \tau)^{d-1+\alpha} \\
		& \le C \int_0^\infty (\lambda \tau)^{\alpha} \lambda^{-1} \sigma\{ x\in \widetilde{Q}_0: \varkappa^{-1/\gamma}(x) > \lambda \} d\lambda \\
		& = C \tau^{\alpha} \int_{\partial\Omega} \varkappa^{-\alpha/\gamma} d\sigma \\
		& \le C \tau^{\alpha},
	\end{align*}
	for any $\alpha<d-1$.
\end{proof}

\section{Proof of main theorem}
We first prove Theorem \ref{thm_maina} and then Theorem \ref{thm_main} follows readily from Proposition \ref{prop_typek}. The line of argument is similar to \cite{ShenZhuge16,AKMP16}.

Due to Lemma \ref{lem_uetue}, it is sufficient to estimate $\norm{\tilde{u}_\e - u_0}_{L^2(\Omega)}$, where $\tilde{u}_\e$ and $u_0$ are defined by
\begin{equation}\label{eq_uePoi}
	\tilde{u}_\e^\alpha (x) = \int_{\partial \Omega} P_{\Omega}^{\alpha\gamma}(x,y) \omega_\e^{\gamma\beta}(y) f^\beta (y,y/\e)d\sigma(y) 
\end{equation}
and
\begin{equation}\label{eq_u0Poi}
	u_0^\alpha(x) = \int_{\partial\Omega} P_{\Omega}^{\alpha\gamma}(x,y) \bar{f}^\gamma (y) d\sigma(y).
\end{equation}

Now we need to find an explicit expression for the homogenized data $\bar{f}$. Roughly speaking, the homogenized data $\bar{f}$ in (\ref{eq_u0Poi}) should be the weak limit of $\omega_\e(y) f (y/\e)$ as $\e \to 0$. By (\ref{eq_omgExp}) and (\ref{est_PhiExp}), for $y\in B(x_0,r)\cap \partial\Omega$, one has
\begin{equation}
	\begin{aligned}\label{eq_wefe}
		&\omega_\e^{\gamma\beta}(y) f^\beta (y/\e)\\
		& \quad = h^{\gamma\nu }(y) \cdot n_{\ell}(y) \frac{\partial }{\partial y_\ell} [ P_k^{\rho\nu}(y) + \e\chi_k^{*\rho\nu}(y/\e) + \bar{v}^{*\rho\nu,x_0}_{\e,k}(y) ] n_k(y) \cdot a_{ij}^{\rho\beta}(y/\e) n_i(y)n_j(y) f^\beta (y,y/\e)  \\
		& \qquad + \txt{Error terms}.
	\end{aligned}
\end{equation}
Note that $\bar{v}_\e^{*,x_0}(y)$ is given in (\ref{eq_barvej}) which depends also on $x_0$. For a fixed $y \in \partial\Omega$, in view of the quantitative ergodic theorem \cite[Proposition 2.1]{AKMP16}, we know that $\omega_\e(y) f (y/\e)$ converges to its average on the tangent plane  $\bH_{n}^d(a)$ at $y$, where $n = n(y)$. The only unclear term in (\ref{eq_wefe}) is $n\cdot \nabla \bar{v}_{\e,k}^{*\nu,x_0}$. Actually, in view of (\ref{eq_barvej}), for $z \in \bH_{n}^d(a)$, one has
\begin{equation}\label{eq_ndve}
	n\cdot \nabla \bar{v}_{\e,k}^{*\nu,x_0}(z) = n\cdot (1-n\otimes n, -n) \Bigg( \begin{aligned}
		\nabla_\theta \\  \partial_t \;
	\end{aligned}
	\Bigg)  V_k^{*\nu,x_0}\Big( \frac{z}{\e},0 \Big) = -\partial_t V_k^{*\nu,x_0}\Big( \frac{z}{\e},0 \Big).
\end{equation}
Note that $V_k^{*,x_0}(\theta,t)$ is 1-periodic in $\theta$. As a consequence, without justification, we can define the homogenized boundary data as follows:
\begin{equation}\label{eq_barf}
	\begin{aligned}
		&\bar{f}^\gamma (y) \\
		& =  h^{\gamma\nu }(y) \int_{\T^d} [\delta^{\rho\nu} + n(y) \cdot \nabla \chi^{*\rho\nu}(\theta)\cdot n(y) - \partial_t V^{*\rho\nu,y}(\theta,0) \cdot n(y)] n_i(y)n_j(y)a_{ij}^{\rho\beta}(\theta) f^\beta (y,\theta) d\theta
	\end{aligned}
\end{equation}


\begin{remark}
	If the coefficient matrix $A = (a_{ij}^{\alpha\beta})$ is constant (or divergence free), then $\chi^* = 0$ and hence $V^* = 0$ in (\ref{eq_barf}). Also in this case, one has $\widehat{A} = A$. By the definition of $h$, this implies that $h^{\gamma\nu} \delta^{\rho\nu} n_i n_j a_{ij}^{\rho\beta} = \delta^{\gamma\beta}$. As a result, (\ref{eq_barf}) is reduced to
	\begin{equation*}
		\bar{f}(y) =  \int_{\T^d} f(y,\theta) d\theta.
	\end{equation*}
	This exactly coincides with the homogenized boundary data defined in Theorem \ref{thm_const} for Dirichlet problems with constant coefficients.
\end{remark}

\begin{proposition}\label{prop_barf_reg}
	Let $x,y\in \partial\Omega$ and $|x-y|<r_0$. Suppose that $n(x),n(y)$ satisfies the Diophantine condition with constant $\varkappa(x)$ and $\varkappa(y)$ respectively. Let $\bar{f}$ be defined by (\ref{eq_barf}). Then
	
	(i) For any $\sigma\in (0,1)$,
	\begin{equation*}
		|\bar{f}(x) -  \bar{f}(y)| \le C\bigg( \frac{|x-y|^2}{\varkappa^{2+\sigma}} + \frac{|x-y|}{\varkappa^{1+\sigma}}\bigg) \sup_{z\in \T^d} \norm{f(\cdot,z)}_{C^1(\partial\Omega)}.
	\end{equation*}
	where $\varkappa = \varkappa(x) \vee \varkappa(y)$ and $C$ depends only on $d,m,\sigma,\Omega$ and $A$.
	
	(ii) For any $0<q<q^* = (d-1)/(2\gamma -1)$, one has
	\begin{equation}\label{key}
	\bar{f} \in W^{1,q}\cap L^\infty(\partial\Omega).
	\end{equation}
\end{proposition}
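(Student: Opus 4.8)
The plan is to read (i) off the closed formula (\ref{eq_barf}), isolating a single boundary-layer trace term that is handled by Theorem \ref{thm_halfV}(iii), and then to deduce (ii) from (i) by estimating $L^q$ difference quotients of $\bar f$ against the weak-$L^p$ size of $\varkappa^{-1}$.

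\textbf{Part (i).} I would write $\bar f^\gamma(y)=h^{\gamma\nu}(y)\int_{\T^d}\Psi^\nu(y,\theta)\,d\theta$, where $\Psi^\nu$ is the bracket $[\delta^{\rho\nu}+n(y)\cdot\nabla\chi^{*\rho\nu}(\theta)\cdot n(y)-\partial_tV^{*\rho\nu,y}(\theta,0)\cdot n(y)]$ times $n_i(y)n_j(y)a_{ij}^{\rho\beta}(\theta)f^\beta(y,\theta)$. Using $|n(x)-n(y)|\le C|x-y|$ (smoothness of $\partial\Omega$), the smoothness and boundedness of $h$ as a function of $n$ (ellipticity of $\widehat A$), and the uniform bound on the trace $\partial_tV^{*,y}(\cdot,0)$ (Theorem \ref{thm_halfV}(i)--(ii) together with the trace theorem in $t$), every factor of $\Psi$ except $\partial_tV^{*,y}(\cdot,0)$ is smooth in $y$ and in $n(y)$, so those pieces, together with $h(x)-h(y)$, contribute $O(|x-y|)\sup_z\norm{f(\cdot,z)}_{C^1(\partial\Omega)}$. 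The only genuine difference to control is
\begin{equation*}
\int_{\T^d}\big[\partial_tV^{*\rho\nu,y}(\theta,0)-\partial_tV^{*\rho\nu,x}(\theta,0)\big]\,g_{\rho\nu}(\theta)\,d\theta,\qquad \norm{g}_{C^1}\le C\sup_z\norm{f(\cdot,z)}_{C^1}.
\end{equation*}
Setting $W=V^{*,y}-V^{*,x}$: the two lifted half-space problems share the datum $-\chi^*$ on $\T^d\times\{0\}$, so $W(\cdot,0)=0$, and hence $\nabla_\theta W(\cdot,0)=0$ as well. Applying Theorem \ref{thm_halfV}(iii) with the normal among $n(x),n(y)$ carrying the larger Diophantine constant (so that $\varkappa=\varkappa(x)\vee\varkappa(y)$ appears) and $|n(x)-n(y)|\le C|x-y|$ gives $\int_0^1\!\int_{\T^d}|\widetilde N^T\nabla_\theta W|^2+|\partial_tW|^2\le C(|x-y|^4\varkappa^{-4-\sigma}+|x-y|^2\varkappa^{-2-\sigma})$. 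I would then test the divergence-form equation for $W$ (the lifted operator for one of the normals, with right-hand side in divergence form coming from the $O(|x-y|)$ mismatch of the two matrices $B$) against $\zeta(t)g_{\rho\nu}(\theta)$ with $\zeta(0)=1$, $\zeta(1)=0$; since $W$ and $\nabla_\theta W$ vanish at $t=0$, the only surviving boundary term is a non-degenerate multiple of $\int_{\T^d}\partial_tW(\theta,0)g(\theta)\,d\theta$, and the volume integrals are bounded by Cauchy--Schwarz and the energy estimate above. Taking square roots converts $\varkappa^{-2-\sigma}|x-y|^2$ and $\varkappa^{-4-\sigma}|x-y|^4$ into $\varkappa^{-1-\sigma/2}|x-y|$ and $\varkappa^{-2-\sigma/2}|x-y|^2$; relabeling $\sigma$ and combining with the smooth part yields (i). The step I expect to be the main obstacle is exactly this transfer from the interior $L^2$ bound of Theorem \ref{thm_halfV}(iii) to a bound on the trace $\partial_tW(\cdot,0)$: it hinges on $W|_{t=0}=0$ and needs care because $V^{*,x}$ and $V^{*,y}$ solve genuinely different lifted operators.

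\textbf{Part (ii).} That $\bar f\in L^\infty(\partial\Omega)$ is immediate from (\ref{eq_barf}), since every factor there is bounded uniformly in $y$ (the trace $\partial_tV^{*,y}(\cdot,0)$ by the bound used in Part (i)). For the $W^{1,q}$ statement I would use the partition of unity of Section 5 to reduce to a single coordinate chart and bound the $L^q$ difference quotient $\big(\int|\bar f(x+h)-\bar f(x)|^q\,d\sigma(x)\big)^{1/q}\le C|h|$ for small $|h|$ (equivalently: $\bar f$ is locally Lipschitz on $\{\varkappa>\delta\}$ with constant $O(\delta^{-1-\sigma})$, and its distributional gradient carries no singular part on the null set $\{\varkappa=0\}$). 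Inserting (i) with $\varkappa=\varkappa(x)\vee\varkappa(x+h)$, I would split the chart according to the size of $\varkappa$ relative to $|h|$: where $\varkappa$ is small — a set whose measure is a power of $|h|$ by the weak-type bound $\sigma\{\varkappa<t\}\le Ct^p$ — use the trivial bound $2\norm{\bar f}_{L^\infty}$; on the complement use (i) with $1/(\varkappa(x)\vee\varkappa(x+h))\le 1/\varkappa(x)$ and the integrability $\varkappa^{-s}\in L^1(\partial\Omega)$ for $s<p$. Balancing these contributions and letting $\sigma\to 0$ produces the admissible range $q<q^*=(d-1)/(2\gamma-1)$, $\gamma=(d-1)/p$, with the quadratic term $|x-y|^2\varkappa^{-2-\sigma}$ in (i) being the one that pins down $q^*$. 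I expect this part to be routine: the only real work is the bookkeeping of the $\varkappa(x)\vee\varkappa(x+h)$ decomposition, together with the weak-type estimate $\sigma\{\varkappa<t\}\le Ct^p$ and a Lemma \ref{lem_rj}-type summation.
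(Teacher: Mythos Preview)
Your proposal is correct and matches the paper's own treatment, which does not give a self-contained argument but simply records that (i) is \cite[Theorem~6.1]{ShenZhuge16} and follows from Theorem~\ref{thm_halfV}(iii), while (ii) is proved as in \cite[Theorem~7.5]{ShenZhuge16}. Your weak-form extraction of the trace $\int_{\T^d}\partial_tW(\theta,0)\,g(\theta)\,d\theta$ from the interior energy bound of Theorem~\ref{thm_halfV}(iii), using $W|_{t=0}=0$, is precisely the mechanism behind that citation; your difference-quotient scheme for (ii), fed by the weak-$L^p$ control of $\varkappa^{-1}$, is a clean implementation of the argument the paper points to.

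One small caveat on (ii): if you carry your own splitting through, the binding constraint actually comes from the \emph{linear} term $|x-y|\varkappa^{-1-\sigma}$ (the quadratic term carries an extra power of $|h|$ after integration), and the difference-quotient characterization of $W^{1,q}$ is only valid for $q>1$. The cited argument instead runs through the Calder\'on--Zygmund partition of Section~5 and a Lemma~\ref{lem_rj}-type summation, which you anticipate at the end; that route works uniformly in $q$ and delivers the stated threshold $q^*=(d-1)/(2\gamma-1)$, which is all that is needed downstream (in the $I_4$ estimate).
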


Part (i) of the last proposition is taken from \cite[Theorem 6.1]{ShenZhuge16}, which actually holds for Dirichlet problem as well and follows from Theorem \ref{thm_halfV} (iii). The proof of part (ii) is similar as \cite[Theorem 7.5]{ShenZhuge16} with an obvious modification. Note that the convexity is not necessary in the proofs.

The rest of the proof is devoted to estimating $\norm{u_\e - u_0}_{L^2(\Omega)}$. To begin with, we perform a partition of unity on $\partial\Omega$ and restrict ourself on $B(x_0,r_0) \cap \partial\Omega$ for some $x_0$ and $r_0>0$ sufficiently small.  So without any loss of generality, we may assume $
\txt{supp}(f(\cdot,y)) \subset B(x_0,r_0)$ for any $y\in \T^d$. Then we construct another partition of unity on $B(x_0,r_0) \cap \partial\Omega$ adapted to $F = \varkappa^{1/\gamma}$, by the method described in the last section, with
\begin{equation*}
	\tau = \e^{s},	
\end{equation*}
for some constant $s\in [1/2,1]$, which will be properly selected by optimizing several errors. Thus, there exist a finite sequence of $\{ \varphi_j \}$ of $C_0^\infty$ positive functions in $\R^d$ and a finite of sequence of \emph{surface cubes} $\{\widetilde{Q}_j\}$ on $\partial\Omega$, such that $\sum_j \varphi_j = 1$ on $B(x_0,2r_0)\cap \partial\Omega$. Note that $\varphi_j$ is supported in $2\widetilde{Q}_j$ and $|\nabla^k \varphi_j| \le Cr_j^{-k}$, where $r_j$ is the \emph{side length} of $\widetilde{Q}_j$ as before.

Note that $\tilde{x}_j$ is the center of $\widetilde{Q}_j$. Let $\Gamma_\e$ denote a boundary layer
\begin{equation*}
	\Gamma_\e = \Omega \cap \bigg( \bigcup_{j} B(\tilde{x}_j, Cr_j) \bigg)
\end{equation*}
and $D_\e = \Omega \setminus \Gamma_\e$. By Lemma \ref{lem_rj},
\begin{equation*}
	|\Gamma_\e| \le \sum_j |B(\tilde{x}_j,Cr_j)| \le C \sum_j r_j^d \le C\tau = C\e^{s}.
\end{equation*}
Thus for any $q>0$,
\begin{equation}\label{est_Gamma}
	\int_{\Gamma_\e} |u_\e - u_0|^q \le C\e^{s},
\end{equation}
where we have used the boundedness of $u_\e$ and $u_0$.

To deal with the $L^q$ norm of $u_\e - u_0$ on $D_\e$, we introduce a function (see \cite{ShenZhuge16})
\begin{equation}\label{eq_Thetat}
	\Theta_t(x) = \sum_j \frac{r_j^{d-1+t}}{|x - \tilde{x}_j|^{d-1}},
\end{equation}
where $0\le t < d-1$.

\begin{lemma}\label{lem_Thetat}
	Let $\Theta_t(x)$ be defined by (\ref{eq_Thetat}). Then if $q>0$ and $0\le qt < d-1$,
	\begin{equation*}
		\int_{D_\e} (\Theta_t(x))^q dx \le C \tau^{qt}.
	\end{equation*}
\end{lemma}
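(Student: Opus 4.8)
The plan is to estimate the integral termwise using the defining bound on the cube sizes, namely $|x-\tilde x_j|\ge c r_j$ for $x\in D_\e$, together with the summability estimate of Lemma \ref{lem_rj}. First I would reduce to the case $q\ge 1$: when $0<q<1$ one can use the subadditivity $\big(\sum_j a_j\big)^q\le \sum_j a_j^q$ to write $(\Theta_t(x))^q\le \sum_j r_j^{(d-1+t)q}|x-\tilde x_j|^{-(d-1)q}$ directly, and then integrate each term over $\Omega$ (or over $D_\e$), noting that since $(d-1)q<d$ the singularity $|x-\tilde x_j|^{-(d-1)q}$ is locally integrable in $\R^d$; this gives $\int_\Omega |x-\tilde x_j|^{-(d-1)q}\,dx\le C\,\mathrm{diam}(\Omega)^{d-(d-1)q}\le C$, hence $\int_{D_\e}(\Theta_t)^q\le C\sum_j r_j^{(d-1+t)q}$. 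But this last sum is \emph{not} what we want unless $q=1$, so the subadditivity trick alone is too lossy for $q>1$; the point of the lemma is genuinely to exploit the geometric separation of the cubes.

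So the main step, carried out for general $q\ge 1$, is to split $D_\e$ into the annular regions $A_j^{(m)}=\{x\in D_\e: 2^{m}r_j\le |x-\tilde x_j|<2^{m+1}r_j\}$ around each center, but more efficiently to observe that for $x\in D_\e$ the quantity $\Theta_t(x)$ behaves like a discrete fractional integral: write $\Theta_t(x)=\sum_j r_j^t\cdot\big(r_j^{d-1}|x-\tilde x_j|^{-(d-1)}\big)$ and note the normalization $\int_{D_\e} r_j^{d-1}|x-\tilde x_j|^{-(d-1)}\,dx$ — again since $(d-1)<d$, each such term integrates to at most $C r_j^{d-1}\cdot\mathrm{diam}(\Omega) \le C r_j^{d-1}$, which together with Lemma \ref{lem_rj} (with $\alpha=t$) bounds $\int_{D_\e}\Theta_t\le C\sum_j r_j^{d-1+t}\le C\tau^{t}$, settling $q=1$. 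For $q>1$ the efficient route is to interpolate: establish the $q=1$ bound $\int_{D_\e}\Theta_t\le C\tau^t$ and an $L^\infty$ bound $\Theta_t(x)\le C\tau^t$ on $D_\e$, and then conclude $\int_{D_\e}\Theta_t^q = \int_{D_\e}\Theta_t^{q-1}\Theta_t\le (\sup_{D_\e}\Theta_t)^{q-1}\int_{D_\e}\Theta_t\le C\tau^{t(q-1)}\cdot C\tau^t = C\tau^{tq}$, which is exactly the claim. The pointwise bound $\Theta_t(x)\le C\tau^t$ on $D_\e$ follows by grouping the cubes according to dyadic scales: cubes $\widetilde Q_j$ with $r_j\approx 2^{-k}$ and with $\tilde x_j$ at distance $\approx 2^\ell r_j$ from $x$ are $O(2^{\ell(d-1)})$ in number by a volume-packing argument (they are boundary cubes with disjoint interiors and comparable neighbors, by Lemma \ref{lem_CZdecomp}(i),(iv)), contributing $\sum_\ell 2^{\ell(d-1)}\cdot (2^{-k})^{d-1+t}(2^{\ell-k})^{-(d-1)} = 2^{-kt}\sum_\ell 2^{-\ell t}\cdot 2^{?}$ — one checks the $\ell$-sum converges because $t>0$...

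wait, I need to recheck: actually for $t=0$ the lemma excludes nothing problematic since it only claims $\int_{D_\e}\Theta_0^q\le C$ when $q<d-1$ (finite, no decay), so the $L^\infty$ bound for $t=0$ is just $\Theta_0(x)\le C\sum_k 1\cdot(\#\text{scales})$ — hmm, this may actually diverge, so for $t=0$ I would fall back on the direct termwise estimate $\int_{D_\e}\Theta_0^q\le \sum_j r_j^{(d-1)q}\int_{D_\e}|x-\tilde x_j|^{-(d-1)q}$, valid since $(d-1)q<d$, giving $\le C\sum_j r_j^{(d-1)q}\cdot r_j^{d-(d-1)q}=C\sum_j r_j^{d}\le C\tau$ by Lemma \ref{lem_rj} — that actually beats the claimed bound. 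The genuinely delicate case is $0<t<d-1$ with $q>1$, where I expect the main obstacle to be making the dyadic grouping and the counting of cubes at each scale fully rigorous; this counting is where Lemma \ref{lem_CZdecomp}(iv) (comparable sizes of adjacent cubes) and (v) (the number estimate) must be invoked carefully, and it is essentially identical to the argument in \cite[Proposition 7.4]{ShenZhuge16}, so I would model the write-up on that and on the proof of Lemma \ref{lem_rj} above.
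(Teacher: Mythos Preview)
Your main proposed route for $q>1$---interpolate between the $L^1$ bound $\int_{D_\e}\Theta_t\le C\tau^t$ and a claimed pointwise bound $\sup_{D_\e}\Theta_t\le C\tau^t$---has a genuine gap: the pointwise bound is \emph{false}. Take $x\in D_\e$ on the boundary of one of the excised balls $B(\tilde x_{j_0},Cr_{j_0})$; then the single term $j=j_0$ already gives $r_{j_0}^{d-1+t}/|x-\tilde x_{j_0}|^{d-1}\sim r_{j_0}^{\,t}$, and since the cube sizes range up to $r_{j_0}\sim\sqrt\tau$ (see the lemma in Section~5 showing $\tau\le r_j\le C\sqrt\tau$), this is $\sim\tau^{t/2}$, not $\tau^t$. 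The best uniform bound on $D_\e$ is therefore only of order $\tau^{t/2}$, and interpolating with $L^1$ yields $\int_{D_\e}\Theta_t^q\le C\tau^{t(q+1)/2}$, strictly weaker than $C\tau^{qt}$ for every $q>1$. Your dyadic grouping sketch cannot repair this, because the defect is in the target inequality itself, not in the counting.

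Your aside on $t=0$ also contains an error: since $(d-1)q<d$, the integral $\int_{D_\e}|x-\tilde x_j|^{-(d-1)q}\,dx$ is dominated by the far-field contribution and is $\approx 1$, not $r_j^{\,d-(d-1)q}$; the lower cutoff $|x-\tilde x_j|\ge cr_j$ gives no gain when the singularity is already integrable at the origin.

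The paper handles the lemma differently. For $q\ge 1$ it cites the argument in \cite{ShenZhuge16}, which is not a simple $L^1$--$L^\infty$ interpolation. For $0<q<1$ it then reduces to the $q\ge 1$ case by H\"older's inequality: pick $q'\ge 1$ with $q't<d-1$ and write
\[
\int_{D_\e}\Theta_t^q\le |D_\e|^{\,1-q/q'}\Big(\int_{D_\e}\Theta_t^{q'}\Big)^{q/q'}\le C\,\tau^{qt}.
\]
(Your subadditivity idea for $q<1$, by contrast, loses a factor $\tau^{(d-1)(q-1)}$, as you noticed.) If you want to write out the $q\ge 1$ case rather than cite, you will need an argument that genuinely couples different cubes---for instance, expand the $q$-th power, estimate the cross terms $\int_{D_\e}\prod_{\ell}|x-\tilde x_{j_\ell}|^{-(d-1)}\,dx$ in terms of the mutual distances of the centers, and then sum using the packing and counting properties of the decomposition---rather than rely on a pointwise bound that the geometry does not support.
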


The original lemma in \cite{ShenZhuge16} was proved for $q\ge 1$. It follows trivially from H\"{o}lder's inequality that the lemma holds also for $0<q< 1$, which will also be useful for us. This lemma will play a key role and be used repeatedly in the following context.

As in \cite{AKMP16,ShenZhuge16}, we split $\tilde{u}_\e - u_0$ into five parts
\begin{equation*}
	\begin{aligned}
		\tilde{u}_\e(x) - u_0(x) & = \int_{\partial \Omega} P_{\Omega}(x,y) \omega_\e(y) f(y,y/\e)d\sigma(y) - \int_{\partial\Omega} P_{\Omega}(x,y) \bar{f}(y) d\sigma(y)\\
		& = I_1 + I_2 + I_3 + I_4 + I_5,
	\end{aligned}
\end{equation*}
where $I_k, 1\le k \le 5,$ will be defined below and handled separately. We point out in advance that estimates for $I_3$ and $I_4$ essentially distinguish from the case of strictly convex domains and need more careful calculations.

Let $\delta>0$ be an arbitrarily small exponent that might differ in each occurrence.

{\bf Estimate of $I_1$:} Let
\begin{equation*}
	\begin{aligned}
		I_1 =& \int_{\partial \Omega} P_{\Omega}^{\alpha\gamma}(x,y) \omega_\e^{\gamma\beta}(y) f^\beta (y,y/\e)d\sigma(y)  \\
		&\qquad  - \sum_j \int_{\partial\Omega}  \varphi_j(y)P_{\Omega}^{\alpha\gamma}(x,y) \tilde{\omega}_{\e}^{\gamma\beta,z_j}(y) f^\beta (y,y/\e)d\sigma(y),
\end{aligned}
\end{equation*}
where
\begin{equation}\label{eq_tilomega}
	\tilde{\omega}_{\e}^{\gamma\beta,z_j}(y) = h^{\gamma\nu }(y) n_{\ell}(y) \frac{\partial }{\partial y_\ell} \Big[P_k^{\rho\nu}(y) + \e\chi_k^{*\rho\nu}(y/\e) + \bar{v}^{*\rho\nu,z_j}_{\e,k}(y)\Big] n_k(y) a_{im}^{\rho\beta}(y/\e) n_i(y)n_m(y),
\end{equation}
and $z_j$'s are specially selected as in (\ref{cdn_zj}). Note that $I_1$ comes from the error terms in (\ref{eq_wefe}), which by (\ref{est_dwe_O}) is bounded by
\begin{equation*}
	C \sum_j \int_{\partial\Omega} \varphi_j(y) |P_{\Omega}(x,y)| \bigg( \sqrt{\e} + \frac{r_j^{2+\sigma}}{\e^{1+\sigma}}\wedge 1 \bigg)  d\sigma(y) = R_1 + R_2,
\end{equation*}
for any $\sigma\in (0,1)$. Observe that
\begin{equation}\label{est_R1}
	R_1 \le C\sqrt{\e} \int_{\partial\Omega} |P_{\Omega}(x,y)| \le C\sqrt{\e}.
\end{equation}
For $R_2$, using $|P_{\Omega}(x,y)| \le C|x-y|^{1-d}$ and $|x-y| \approx |x - \tilde{x}_j|$ for $x\in D_\e, y \in B(\tilde{x}_j,Cr_j)$, we have
\begin{equation}\label{est_R2}
	R_2 = C \sum_j \int_{\partial\Omega} \varphi_j(y) |P_{\Omega}(x,y)| \bigg( \frac{r_j^{2+\sigma}}{\e^{1+\sigma}}\wedge 1 \bigg)  d\sigma(y)\le C  \e^{-1-\sigma} \sum_j \frac{r_j^{2+\sigma+d-1}}{|x - \tilde{x}_j|^{d-1}}
\end{equation}
Now we estimate $R_2$ by Lemma \ref{lem_Thetat} in two separate cases. If $2(2+\sigma)<d-1$, then we apply Lemma \ref{lem_Thetat} directly with $q = 2$ and obtain
\begin{equation}\label{est_R2a}
	\int_{D_\e} |R_2(x)|^2 dx \le C \e^{-2(1+\sigma)} \tau^{2(2+\sigma)} \le C\e^{4(s - \frac{1}{2}) -\delta},
\end{equation}
where we have used $\tau = \e^s$ and chosen $\sigma$ sufficiently small. Otherwise, we choose suitable $q<2$ such that $q(2+\sigma) = d-1-\sigma <d-1$ and then apply Lemma \ref{lem_Thetat}
\begin{equation*}
	\int_{D_\e} |R_2(x)|^q dx \le C \e^{-q(1+\sigma)} \tau^{q(2+\sigma)} \le C\e^{(s - \frac{1}{2})(d-1)-\delta},
\end{equation*}
where again, $\sigma$ is chosen sufficiently small. Clearly, (\ref{est_R2}) also implies $|R_2| \le C$. Thus, a simple interpolation leads to
\begin{equation}\label{est_R2b}
	\int_{D_\e} |R_2(x)|^2 dx \le C\e^{(s - \frac{1}{2} )(d-1)-\delta}.
\end{equation}
Combining (\ref{est_R1}), (\ref{est_R2a}) and (\ref{est_R2b}), we obtain
\begin{equation*}
	\int_{D_\e} |I_1(x)|^2 dx \le C \e^{1 \wedge 4(s - \frac{1}{2}) \wedge (d-1)(s - \frac{1}{2}) -\delta}.
\end{equation*}

{\bf Estimate of $I_2$:} Set
\begin{equation}\label{eq_I2}
	\begin{aligned}
		I_2 &= \sum_j \int_{\partial\Omega}  \varphi_j(y)P_{\Omega}^{\alpha\gamma}(x,y) \tilde{\omega}_{\e}^{\gamma\beta,z_j}(y) f^\beta (y,y/\e)d\sigma(y)\\
		& \qquad - \sum_j \int_{\partial\bH^d_j}  \varphi_j(P_j^{-1}(y))P_{\Omega}^{\alpha\gamma}(x,P_j^{-1}(y)) \tilde{\omega}_{\e}^{\gamma\beta,z_j}(y) f^\beta (z_j,y/\e)d\sigma(y)
	\end{aligned}
\end{equation}
where $\partial\bH^d_j$ denotes the tangent plane for $\partial\Omega$ at $z_j$ and $P^{-1}_j$ is the inverse of the projection map from $B(z_j,Cr_j) \cap\partial\Omega$ to $\partial\bH^d_j$. We clarify that in (\ref{eq_tilomega}), $n(y)$ is the outer normal of $y \in \partial\Omega$. But in the second term of (\ref{eq_I2}), $y$ needs to belong to $\partial\bH^d_j$ and hence we need to update $n(y) = n(z_j)$ for all $y\in \partial\bH^d_j$. This modification leads to some harmless errors bounded by $Cr_j \le Cr^2_j/\e$. Then, for the same reason as the term $T_2$ in \cite{AKMP16} or $I_2$ in \cite{ShenZhuge16}, we are able to bound $I_2$ by
\begin{equation*}
	|I_2| \le C\e^{-1} \sum_j \frac{r_j^{2+d-1}}{|x - \tilde{x}_j|^{d-1}}.
\end{equation*}
Similar as (\ref{est_R2}), we estimate this in two cases and obtain
\begin{equation*}
	\int_{D_\e} |I_2(x)|^2 dx \le  C\e^{4(s - \frac{1}{2} ) \wedge (d-1)(s - \frac{1}{2}) -\delta}.
\end{equation*}

{\bf Estimate of $I_3$:} Set
\begin{equation*}
	\begin{aligned}
		I_3 & = \sum_j \int_{\partial\bH^d_j}  \varphi_j(P_j^{-1}(y))P_{\Omega}^{\alpha\gamma}(x,P_j^{-1}(y)) \tilde{\omega}_{\e}^{\gamma\beta,z_j}(y) f^\beta (z_j,y/\e)d\sigma(y) \\
		& \qquad - \sum_j \int_{\partial\bH^d_j}  \varphi_j(P_j^{-1}(y))P_{\Omega}^{\alpha\gamma}(x,P_j^{-1}(y)) \bar{f}^\gamma(z_j) d\sigma(y),
	\end{aligned}
\end{equation*}
where $\bar{f}$ is defined in (\ref{eq_barf}). To estimate $I_3$, we apply the quantitative ergodic theorem in \cite{AKMP16}. As we have mention in the estimate of $I_2$, the outer normal in the definition of $\tilde{\omega}_{\e}^{\gamma\beta,z_j}(y)$ is constant on $\partial\bH^d_j$ with Diophantine constant $\varkappa(z_j)$, and therefore $\tilde{\omega}_{\e}^{\gamma\beta,z_j}(y)$ is nothing but a slice of some 1-periodic function in $\R^d$ (see (\ref{eq_ndve})). Note that by (\ref{cdn_zj}), $\varkappa(z_j) > (\tau/r_j)^{\gamma}$. Then it follows from \cite[Proposition 2.1]{AKMP16} that for any $N>0$,
\begin{equation*}
	\begin{aligned}
		|I_3| & \le C\sum_j \Big( \frac{\e r_j^\gamma}{\tau^\gamma} \Big)^{N} \int_{2\widetilde{Q}_j} |\nabla^N(\varphi_j(y) P_{\Omega}(x,y))| d\sigma(y) \\
		& \le C\sum_j \Big( \frac{\e r_j^\gamma}{\tau^\gamma} \Big)^{N} \sum_{k=0}^{N} \frac{r_j^{d-1-N+k}}{|x - \tilde{x}_j|^{d-1+k}} \\
		& \le C \e^N \tau^{-\gamma N} \sum_j \frac{r_j^{N(\gamma - 1) +d-1}}{|x - \tilde{x}_j|^{d-1}},
	\end{aligned}
\end{equation*}
where we have used $|\nabla^k \varphi_j| \le Cr_j^{-k}$, $|\nabla^k P_{\Omega}(x,y)| \le C|x-y|^{1-d-k}$ and $r_j \le C|x-\tilde{x}_j| \approx C|x-y|$ for all $x\in D_\e$ and $y\in 2\widetilde{Q}_j$. Now we choose $q\le 2$ and $N\ge 1$ properly so that $qN(\gamma - 1) = d-1-\delta <d-1$ and apply Lemma \ref{lem_Thetat}
\begin{equation*}
	\int_{D_\e} |I_3|^q \le C\e^{qN} \tau^{-q\gamma N} \tau^{qN(\gamma - 1)} = C \e^{(1-s)(d-1)/(\gamma -1) -\delta}.
\end{equation*}
This implies, as before,
\begin{equation*}
	\int_{D_\e} |I_3|^2 \le C \e^{(1-s)(d-1)/(\gamma -1)-\delta}.
\end{equation*}

{\bf Estimate of $I_4$}: Set
\begin{equation*}
	\begin{aligned}
		I_4 & = \sum_j \int_{\partial\bH^d_j}  \varphi_j(P_j^{-1}(y))P_{\Omega}^{\alpha\gamma}(x,P_j^{-1}(y)) \bar{f}^\gamma(z_j) d\sigma(y) \\
		& \qquad - \sum_j \int_{\partial\bH^d_j}  \varphi_j(P_j^{-1}(y))P_{\Omega}^{\alpha\gamma}(x,P_j^{-1}(y)) \bar{f}^\gamma(P_j^{-1}(y)) d\sigma(y).
	\end{aligned}
\end{equation*}
The estimate for $I_4$ essentially relies on the regularity of homogenized data $\bar{f}$. Indeed, by Proposition \ref{prop_barf_reg}
\begin{align*}
	|\bar{f}(z_j) - \bar{f}(P_j^{-1}(y))| & \le C\Bigg( \frac{r_j^2}{\varkappa(z_j)^{2+\sigma}} + \frac{r_j}{\varkappa(z_j)^{1+\sigma}}\Bigg) \\
	& \le C\Bigg( \frac{r_j^{2+\gamma(2+\sigma)}}{\tau^{\sigma(2+\sigma)}} + \frac{r_j^{1+\gamma(1+\sigma)}}{\tau^{\sigma(1+\sigma)}}\Bigg),
\end{align*}
where we also used $|z_j - P_j^{-1}(y)|\le Cr_j$. This leads to a bound for $I_4$
\begin{align*}
	|I_4| \le C \tau^{-\gamma(2+\sigma)} \sum_j \frac{r_j^{2+\gamma(2+\sigma) + d-1}}{|x - x_j|^{d-1}} + C \tau^{-\gamma(1+\sigma)} \sum_j \frac{r_j^{1+\gamma(1+\sigma) + d-1}}{|x - x_j|^{d-1}},
\end{align*}
of which we denote the terms on the right-hand side by $J_1$ and $J_2$ in proper order. Using Lemma \ref{lem_Thetat} and a familiar argument as before, we are able to show
\begin{equation*}
	\begin{aligned}
		\int_{D_\e} |I_4|^2 &\le C \int_{D_\e} |J_1|^2 + C\int_{D_\e} |J_2|^2 \\
		& \le C \e^{4s \wedge s(d-1)/(1+\gamma)-\delta} + C\e^{2s \wedge s(d-1)/(1+\gamma)-\delta} \\
		& \le C\e^{2s \wedge s(d-1)/(1+\gamma)-\delta}.
	\end{aligned}
\end{equation*}

{\bf Estimate of $I_5$}: Finally, let
\begin{equation*}
	\begin{aligned}
		I_5 & =  \sum_j \int_{\partial\bH^d_j}  \varphi_j(P_j^{-1}(y))P_{\Omega}^{\alpha\gamma}(x,P_j^{-1}(y)) \bar{f}^\gamma(P_j^{-1}(y)) d\sigma(y) \\
		&\qquad - \int_{\partial\Omega} P_{\Omega}(x,y) \bar{f}(y) d\sigma(y).
	\end{aligned}
\end{equation*}
A change of variables gives
\begin{equation*}
	|I_5| \le C \sum_j \frac{r_j^{1+d-1}}{|x - \tilde{x}_j|^{d-1}}.
\end{equation*}
Then by Lemma \ref{lem_Thetat} and a familiar argument, we obtain
\begin{equation*}
	\int_{D_\e} |I_5|^2 \le C \e^{2s \wedge s(d-1)-\delta}.
\end{equation*}

Now it suffices to choose $s \in [1/2,1]$ properly to maximize the exponents for the bounds of $I_k$'s, as well as (\ref{est_Gamma}). To simplify, we note that the bound of $I_5$ is controlled by $I_4$ and the exponent $2s$ in the bound of $I_4$ can be ignored since $2s\ge 1$. As a result, it is sufficient to maximize
\begin{equation}\label{eq_maxbeta}
	\alpha^* = \max_{s\in [1/2,1]} \bigg[ s \wedge 4(s - \frac{1}{2}) \wedge (d-1)(s - \frac{1}{2})
	\wedge \frac{(1-s)(d-1)}{\gamma -1} \wedge \frac{s(d-1)}{1+\gamma}\bigg].
\end{equation}
It is easy to see that $\alpha^*$ is well-defined and $0 <\alpha^* \le 1$, if $\gamma >1$.

If $\gamma = 1$, i.e., $p = d-1$, we should replace (\ref{eq_maxbeta}) by
\begin{equation}\label{eq_max1}
	\alpha^* = \max_{s\in [1/2,1]} \bigg[ s \wedge 4(s - \frac{1}{2}) \wedge (d-1)(s - \frac{1}{2} ) \wedge \frac{s(d-1)}{2}\bigg],
\end{equation}
since the term involving $\gamma -1$ is positive infinity as long as $s \neq 1$. Note that (\ref{eq_max1}) is an increasing function of $s$ and thus the maximum is attained as $s$ approaching $1$. Thus in this case,
\begin{equation}\label{eq_alpha*1}
	\alpha^* = 1 \wedge \frac{d-1}{2} = 1\wedge \frac{p}{2}.
\end{equation}
By Proposition \ref{prop_convex_Lp}, this is exactly the case of strictly convex domains and (\ref{eq_alpha*1}) conincides with (\ref{est_convex_rate}) as expected.

Therefore, we have shown that
\begin{equation*}
	\int_{\Omega} |\tilde{u}_\e - u_0|^2 \le C\e^{\alpha^* - \delta},
\end{equation*}
for arbitrarily small $\delta>0$, where $\alpha^*$ is given by (\ref{eq_maxbeta}) if $p<d-1$ and by (\ref{eq_alpha*1}) if $p = d-1$. This, together with Lemma \ref{lem_uetue} and Proposition \ref{prop_barf_reg} (ii), ends the proof of Theorem \ref{thm_maina}.

\begin{remark}\label{rmk_lowdim}
	Note that the exponent $\alpha^*$ for $\gamma > 1$ in (\ref{eq_maxbeta}) can be computed precisely by solving a linear programming problem. However, for lower dimensional cases ($d\le 5$), we can determine $\alpha^*$ easily. And for higher dimensional cases , it is not hard to find a lower bound for $\alpha^*$. These will be treated separately in the following.
	
	{\bf Case 1: $d=2,3$.} Note that in this situation, the second term and the last term in the brackets of (\ref{eq_maxbeta}) can be ignored since $4\ge d-1$ and $d-1\le 1+\gamma$. For either $d=2$ or $d=3$, the maximum is attained by setting
	\begin{equation*}
		(d-1)(s - \frac{1}{2})
		= \frac{(1-s)(d-1)}{\gamma -1},
	\end{equation*}
	which gives $s = (1+\gamma)/(2\gamma)$. Substituting this back we obtain that
	\begin{equation*}
		\alpha^* = \frac{d-1}{2\gamma} = \frac{p}{2}.
	\end{equation*}
	
	{\bf Case 2: $d = 4,5$.} In this situation, only the second term in the brackets of (\ref{eq_maxbeta}) can be ignored. And we need to consider two subcases. If $d-1\le \gamma+1$, i.e., $\gamma \ge d-2$, then
	\begin{equation*}
		\alpha^* = \max_{s\in [1/2,1]} \bigg[ (d-1)(s - \frac{1}{2})
		\wedge \frac{(1-s)(d-1)}{\gamma -1} \wedge \frac{s(d-1)}{1+\gamma}\bigg] = \frac{p}{2},
	\end{equation*}
	since all the three terms are equal when $s = (1+\gamma)/(2\gamma)$. Now if $1\le \gamma < d-2$,
	\begin{equation*}
		\alpha^* = \max_{s\in [1/2,1]} \bigg[s \wedge (d-1)(s - \frac{1}{2})
		\wedge \frac{(1-s)(d-1)}{\gamma -1}\bigg].
	\end{equation*}
	In this subcase, it is not hard to verify the intersections of the graph for three corresponding linear functions and obtain the maximum point by solving
	\begin{equation*}
		s = \frac{(1-s)(d-1)}{\gamma -1}.
	\end{equation*}
	This gives
	\begin{equation*}
		\alpha^* = s = \frac{d-1}{\gamma + d-2} = \frac{(d-1)p}{d-1+(d-2)p}.
	\end{equation*}
	Combining the two sub-cases together, we have for both $d = 4$ and $5$,
	\begin{equation*}
		\alpha^* = \frac{p}{2} \wedge \frac{(d-1)p}{d-1+(d-2)p}.
	\end{equation*}
	
	{\bf Case 3: $d>5$.} In view of Case 1 and Case 2, it is natural to pick $s = (1+\gamma)/(2\gamma)$, which actually optimizes the last three terms of (\ref{eq_maxbeta}). Then we have
	\begin{equation*}
		\alpha^* \ge \frac{1+\gamma}{2\gamma} \wedge \frac{4}{2\gamma} \wedge \frac{d-1}{2\gamma} = \frac{p+d-1}{2(d-1)} \wedge \frac{2p}{d-1} \wedge \frac{p}{2}.
	\end{equation*}
\end{remark}

\begin{proof}[Proof of Theorem \ref{thm_main}]
	If $\Omega$ is a smooth compact domain of type $k$, then Proposition \ref{prop_typek} claims that $\varkappa(\cdot)^{-1} \in L^{p,\infty}(\partial\Omega,d\sigma)$ with $p = 1/(k-1)$. Then Theorem \ref{thm_main} follows readily from Theorem \ref{thm_maina}.
\end{proof}

\begin{remark}\label{rmk_finite}
	For the domains of finite type, the rate of convergence in Theorem \ref{thm_main} is not nearly close to the result of Theorem \ref{thm_const}, except for $k=2$ and lower dimensions. Actually, if $2\le d\le 5$ and $\partial\Omega$ is of type $k$, by Proposition \ref{prop_typek}, $\varkappa(n(\cdot))^{-1} \in L^{p,\infty}$ with $p = 1/(k-1)$. Then applying Theorem \ref{thm_maina} and Remark \ref{rmk_lowdim}, we have
	\begin{equation*}
		\norm{u_\e - u_0}_{L^2(\Omega)} \le C\e^{\frac{1}{4(k-1)} - \delta}.
	\end{equation*}
	The only case coincides with Theorem \ref{thm_const} is $k =2$.
\end{remark}

We end this paper by stating an application of Theorem \ref{thm_main} concerning the higher order convergence rate for non-oscillating Dirichlet boundary value problem. 
\begin{theorem}\label{thm_high}
	Let $A$ and $\Omega$ be the same as Theorem \ref{thm_main}. Assume that $u_\e$ is the solution of
	\begin{equation}\label{eq_Leg}
		\left\{
		\begin{aligned}
			\cL_\e u_\e(x) &= 0 &\quad & \txt{in } \Omega, \\
			u_\e(x) &= g(x) &\quad & \txt{on } \partial\Omega,
		\end{aligned}
		\right.
	\end{equation}
	where $g$ is smooth. Let $u_0$ be the solution of the homogenized problem of (\ref{eq_Leg}). Then there exist a unique function $v^{bl}$ independent of $\e$ such that
	\begin{equation}\label{eq_L2high}
		\norm{u_\e(x) - u_0(x) - \e \chi(x/\e) \nabla u_0(x) - \e v^{\txt{bl}}(x)}_{L^2(\Omega)} \le C\e^{1+\alpha^*-\delta},
	\end{equation}
	for any $\delta>0$, where $\alpha^*$ is given by (\ref{eq_alpha*}). Furthermore, the function $v^{\txt{bl}}$ is the solution of a non-oscillating Dirichlet problem
	\begin{equation*}
		\left\{
		\begin{aligned}
			\cL_0 v^{\txt{bl}}(x) &= 0 &\quad & \txt{in } \Omega, \\
			v^{\txt{bl}}(x) &= g_*(x) &\quad & \txt{on } \partial\Omega,
		\end{aligned}
		\right.
	\end{equation*}
	which is the homogenized problem of
	\begin{equation}\label{eq_Lebl}
		\left\{
		\begin{aligned}
			\cL_\e u_{1,\e}^{\txt{bl}}(x) &= 0 &\quad & \txt{in } \Omega, \\
			u_{1,\e}^{\txt{bl}}(x) &= -\chi\Big(\frac{x}{\e}\Big) \nabla u_0(x) &\quad & \txt{on } \partial\Omega.
		\end{aligned}
		\right.
	\end{equation}
\end{theorem}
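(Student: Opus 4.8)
The plan is to obtain Theorem~\ref{thm_high} by combining Theorem~\ref{thm_main} with the (standard, purely interior) second--order two--scale expansion of $u_\e$; the key point is that the boundary--layer term of that expansion is precisely the solution of~(\ref{eq_Lebl}). First, since $g$ and $\partial\Omega$ are smooth, elliptic regularity gives $u_0\in C^\infty(\overline\Omega)$, so the boundary datum $-\chi(x/\e)\nabla u_0(x)$ of~(\ref{eq_Lebl}) has the form $f(x,x/\e)$ with $f(x,y)=-\chi(y)\nabla u_0(x)\in C^\infty(\partial\Omega\times\R^d)$, $1$--periodic in $y$; hence~(\ref{eq_Lebl}) is an admissible instance of~(\ref{eq_Leue}). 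Theorem~\ref{thm_main} then produces a \emph{unique} function $v^{\txt{bl}}$, independent of $\e$, solving $\cL_0 v^{\txt{bl}}=0$ in $\Omega$ with $v^{\txt{bl}}=g_*$ on $\partial\Omega$, where $g_*$ is the homogenized datum given by~(\ref{eq_barf}) for this $f$ (so $g_*\in W^{1,q}\cap L^\infty(\partial\Omega)$ for every $q<q^*$), together with the convergence rate~(\ref{est_ueu0L2}) for $u_{1,\e}^{\txt{bl}}-v^{\txt{bl}}$. Uniqueness of $v^{\txt{bl}}$ is immediate from the well--posedness of the non--oscillating Dirichlet problem with $L^\infty$ datum.

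The core of the proof is the estimate
\[
  \Norm{u_\e-u_0-\e\,\chi(\cdot/\e)\nabla u_0-\e\,u_{1,\e}^{\txt{bl}}}_{L^2(\Omega)}\le C\e^{2-\delta}.
\]
To prove it I would introduce the full second--order two--scale approximation
\[
  \phi_\e:=u_0+\e\,\chi_j(\cdot/\e)\,\partial_j u_0+\e^2\chi_{jk}(\cdot/\e)\,\partial_j\partial_k u_0+\e\,u_{1,\e}^{\txt{bl}}+\e^2 u_{2,\e}^{\txt{bl}},
\]
where $\chi_{jk}$ is the second--order cell corrector for $\cL_\e$ and $u_{2,\e}^{\txt{bl}}$ solves $\cL_\e u_{2,\e}^{\txt{bl}}=0$ in $\Omega$ with $u_{2,\e}^{\txt{bl}}=-\chi_{jk}(\cdot/\e)\,\partial_j\partial_k u_0$ on $\partial\Omega$. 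By construction the two boundary--layer terms cancel the cell correctors along $\partial\Omega$, so $\phi_\e=u_0=g=u_\e$ there and $u_\e-\phi_\e\in H_0^1(\Omega)$; moreover $\cL_\e(\e\,u_{1,\e}^{\txt{bl}})=\cL_\e(\e^2 u_{2,\e}^{\txt{bl}})=0$, so that, by the standard two--scale identity applied to the three bulk terms,
\[
  \cL_\e(u_\e-\phi_\e)=\operatorname{div}(\e^2 F_\e)+\e^2 G_\e,\qquad \norm{F_\e}_{L^2(\Omega)}+\norm{G_\e}_{L^2(\Omega)}\le C\norm{u_0}_{H^3(\Omega)}.
\]
An energy estimate then gives $\norm{u_\e-\phi_\e}_{H_0^1(\Omega)}\le C\e^2$, and, testing against the solution $\theta_\e$ of $\cL_\e^*\theta_\e=h$, $\theta_\e|_{\partial\Omega}=0$, a one--line duality argument (using only $\norm{\theta_\e}_{H^1(\Omega)}\le C\norm{h}_{L^2(\Omega)}$) yields $\norm{u_\e-\phi_\e}_{L^2(\Omega)}\le C\e^2$ (up to a harmless logarithmic loss absorbed into $\e^{-\delta}$). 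Finally $\chi_{jk}(\cdot/\e)$ and $\partial_j\partial_k u_0$ are bounded and $u_{2,\e}^{\txt{bl}}$ is bounded by the Agmon--type maximum principle, so $\norm{\e^2\chi_{jk}(\cdot/\e)\partial_j\partial_k u_0+\e^2 u_{2,\e}^{\txt{bl}}}_{L^2(\Omega)}\le C\e^2$, and the displayed estimate follows. This step is entirely interior and is insensitive to the geometry of $\partial\Omega$ beyond its smoothness.

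Combining the two ingredients, we write
\[
  u_\e-u_0-\e\,\chi(\cdot/\e)\nabla u_0-\e\,v^{\txt{bl}}=\bigl(u_\e-u_0-\e\,\chi(\cdot/\e)\nabla u_0-\e\,u_{1,\e}^{\txt{bl}}\bigr)+\e\,\bigl(u_{1,\e}^{\txt{bl}}-v^{\txt{bl}}\bigr),
\]
take $L^2(\Omega)$ norms, and apply the estimate of the previous paragraph to the first bracket and~(\ref{est_ueu0L2}) to the second; since $\alpha^*\le 1$ the second term is the dominant contribution, and one arrives at~(\ref{eq_L2high}). I expect the main obstacle to be precisely the $O(\e^{2-\delta})$ two--scale estimate --- that is, correctly organizing the second--order expansion together with its two boundary--layer correctors and running the duality --- while everything else reduces to a black--box application of Theorem~\ref{thm_main} to problem~(\ref{eq_Lebl}); beyond this bookkeeping I do not anticipate an essential difficulty.
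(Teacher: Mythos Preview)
Your approach is exactly the one the paper adopts: the paper simply notes that (\ref{eq_Lebl}) is a special case of (\ref{eq_Leue}) and defers the proof to \cite[Theorem~5.1]{GerMas12}, and your argument is precisely that scheme --- a second-order two-scale expansion with boundary-layer correctors to obtain an $O(\e^{2})$ bulk error, followed by a black-box application of Theorem~\ref{thm_main} to $u_{1,\e}^{\txt{bl}}$.

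One slip to flag, however. When you apply~(\ref{est_ueu0L2}) to the pair $u_{1,\e}^{\txt{bl}},\,v^{\txt{bl}}$ you get
\[
  \norm{u_{1,\e}^{\txt{bl}}-v^{\txt{bl}}}_{L^2(\Omega)}\le C\e^{\frac{1}{2}\alpha^*-\delta},
\]
so after multiplying by $\e$ the dominant contribution is $C\e^{1+\frac{1}{2}\alpha^*-\delta}$, not $C\e^{1+\alpha^*-\delta}$. Your sentence ``one arrives at~(\ref{eq_L2high})'' therefore overshoots by a factor of $\tfrac{1}{2}$ in the exponent. In fact the stated exponent $1+\alpha^*$ in~(\ref{eq_L2high}) seems to be a typo in the paper: consistency with Theorem~\ref{thm_main} (and with the very argument from \cite{GerMas12} that the paper cites) gives $1+\tfrac{1}{2}\alpha^*$. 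Your proof yields that corrected rate, not the one literally written.
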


Note that (\ref{eq_Lebl}) is a special case of (\ref{eq_Leue}). The proof of Theorem \ref{thm_high} is the same as \cite[Theorem 5.1]{GerMas12}. From the improved $L^2$ convergence rate (\ref{eq_L2high}), one can also have an improved $H^1$ convergence rate $O(\e^{1+\alpha^*-\delta})$ in any relatively compact subset of $\Omega$. The details will be omitted here; see \cite{GerMas12} for reference.

\bibliographystyle{amsplain}
\bibliography{mybib}
\end{document}